\documentclass[a4paper, 12pt, oneside, notitlepage]{amsart}
\usepackage[dvipsnames]{xcolor}
\usepackage[colorlinks=true,linkcolor=Red,citecolor=Green]{hyperref}
\usepackage{tikz-cd}
\usepackage[margin=3cm]{geometry}
\usepackage[normalem]{ulem}
\usepackage{amsmath,amssymb,amsthm,graphicx,mathrsfs,bbm,url}
\usepackage{amsthm}
\usepackage{wrapfig}
\usepackage{enumitem}
\usepackage{mathtools}
\usepackage[utf8]{inputenc}
 \usepackage[T1]{fontenc}
\usepackage[super]{nth}
\usepackage[open, openlevel=2, depth=3, atend]{bookmark}
\hypersetup{pdfstartview=XYZ}
\usepackage[font=footnotesize]{caption}
\usepackage{a4wide}
\usepackage[all]{xy}

\usepackage{epstopdf}
 
\usepackage{hyperref}

\theoremstyle{plain}
\newtheorem{theorem}{Theorem}[section]
\newtheorem*{theorem*}{Theorem}

\newtheorem{lemma}[theorem]{Lemma}
\newtheorem{proposition}[theorem]{Proposition}
\newtheorem{corollary}[theorem]{Corollary}

\theoremstyle{definition}
\newtheorem{definition}[theorem]{Definition}

\theoremstyle{remark}
\newtheorem{remark}[theorem]{Remark}

\numberwithin{equation}{section}

\newcommand{\fg}{\mathfrak{g}}

\newcommand{\C}{\mathbb{C}}
\newcommand{\R}{\mathbb{R}}

\newcommand{\Z}{\mathbb{Z}}

\newcommand{\V}{\mathbb{V}}

\newcommand{\HH}{\mathbb{H}}

\newcommand{\eps}{\varepsilon}

\newcommand{\mc}{\mathcal}

\newcommand{\dd}{\mathrm{d}}
\newcommand{\n}{\mathbf{n}}
\newcommand{\kk}{\mathbf{k}}
\newcommand{\btheta}{\boldsymbol{\theta}}
\newcommand{\comp}{\mathrm{comp}}
\newcommand{\bk}{\mathbf{k}}
\newcommand{\Lk}{\mathbf{L}^{\otimes \mathbf{k}}}

\DeclareMathOperator{\vol}{vol}

\DeclareMathOperator{\Tr}{Tr}
\DeclareMathOperator{\Op}{Op}

\DeclareMathOperator{\hol}{hol}

\newcommand{\be}{\begin{equation}}
\newcommand{\ee}{\end{equation}}

\author{Sebastián Muñoz-Thon}
\address{Universit\'e Paris-Saclay, Laboratoire de math\'ematiques d’Orsay, 91405, Orsay, France.}
\email{sebastian.munoz-thon@universite-paris-saclay.fr}

\title[Heat propagation on covers]{Heat propagation on abelian covers of principal bundles}

\begin{document}

\begin{abstract}
We study the long-time asymptotics of heat kernels on Abelian covers of compact manifolds and on Abelian covers of principal bundles. For Abelian covers, we establish three complementary asymptotic expansions: a distributional expansion describing correlations of the heat semigroup, a local pointwise expansion of the heat kernel on compact subsets, and a global expansion valid on the natural diffusive scale, revealing the Gaussian profile governing heat propagation. We then extend these results to horizontal heat kernels on principal bundles under natural holonomy and curvature assumptions. In this setting, we show that the leading asymptotics are entirely determined by the geometry of the underlying Abelian cover, while the nontrivial fiber modes decay exponentially fast due to a uniform spectral gap. The proof combines Floquet theory on Abelian covers with the Borel--Weil decomposition on principal bundles. 
\end{abstract}

\maketitle

\section{Introduction}

\subsection{Asymptotics of the heat kernel on Abelian covers} \label{subsection:results-abelian}

Let $M_{0}$ be a smooth, closed, connected manifold equipped with a Riemannian metric $g_{0}$. Let $\rho \colon \pi_{1}(M_{0}) \to \Z^{d}$ be a surjective representation. This defines a $\Z^{d}$-cover $\pi \colon M \to M_{0}$ by considering the associated bundle $M := \widetilde{M_{0}} \times _{\rho} \Z^{d}$, where $\widetilde{M_{0}}$ denotes the universal cover of $M_{0}$. Note that we have an action on $\widetilde{M}_0 \times \Z^{d}$ given by
$\tau_\kk(x,\n) = (x,\n + \kk)$, which descends to the quotient $M$. We will refer to this construction as an Abelian cover, and we refer to \S\ref{ssection:abelian-covers} for further details. We endow $M$ with the metric $g=\pi^{*}g_{0}$, and we denote by $\Delta_{M}$ the associated Laplace--Beltrami operator. Let $H_M(t,x,y)$ denote the corresponding heat kernel. To state our results, we will need the spaces $(B^{s,r}(M))_{s,r \geq 0}$. Functions in $B^{s,r}$ are locally $H^{s}$ and exhibit increasing decay at infinity in $M$ as $r$ grows. We refer to \S\ref{ssection:functional-spaces} for the proper definition.

Our first result is a dual expansion, i.e., we obtain asymptotics of correlations of the diffusion.

\begin{theorem}
\label{theorem:main1}
There exist a constant $\kappa > 0$, and distributions $\mathcal{C}_{j} \in \mathcal{D}'(M \times M)$, such that for all integers $N \geq 1$, we have in $\mathcal{D}'(M \times M)$:
\begin{equation}
    \label{equation:decay-correlation-0}
\begin{split}
t^{d/2} H_{M}(t) =  \kappa \mathbf{1}_{M} \otimes \mathbf{1}_{M} + \sum_{j =1}^{N-1} t^{-j} \mathcal{C}_{j} + \mathcal{R}_N(t)
\end{split}
\end{equation}
where the following estimates hold: for all $s > 0$, there exists a constant $C > 0$ such that
\begin{equation} \label{eq:decay-estimates}
\begin{split}
|\langle \mathcal{C}_j,f \otimes g\rangle| & \leq C\|f\|_{B^{s,2j}}\|g\|_{B^{s,2j}},\quad 1 \leq j \leq N - 1,\\
|\langle \mathcal{R}_N,f \otimes g\rangle| & \leq C \left(e^{-Ct}\|f\|_{L^{2}}\|g\|_{L^{2}} + \langle t \rangle^{-N}\|f\|_{B^{s,2N+d+1}}\|g\|_{B^{s,2N+d+1}}\right), \quad \forall t \geq 0.
\end{split}
\end{equation}
\end{theorem}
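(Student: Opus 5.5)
We decompose the heat semigroup with Floquet--Bloch theory on the $\Z^{d}$-cover and read off the large-time behaviour from the bottom of the Bloch spectrum. For $\theta \in \mathbb{T}^{d} := \R^{d}/(2\pi\Z)^{d}$, let $\Delta_{\theta}$ be the Laplacian on $M_{0}$ acting on sections of the flat line bundle associated with the character $\chi_{\theta} \circ \rho$. Equivalently, after conjugating by $e^{i\langle \theta, \cdot\rangle}$, it is the twisted operator $\Delta_{\theta} = e^{-i\theta\cdot\phi}\Delta_{M} e^{i\theta\cdot\phi}$ acting on $L^{2}(M_0)$, where $\phi \colon M \to \R^{d}$ is a fixed smooth equivariant map with $\phi\circ\tau_\kk=\phi+\kk$. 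The Floquet transform $\mathcal{U}f(\theta,x) = \sum_{\kk} e^{-i\theta\cdot(\phi(x)+\kk)} f(\tau_{\kk}x)$ is unitary from $L^2(M)$ onto $L^{2}(\mathbb{T}^{d}; L^{2}(M_{0}))$ (up to normalization) and conjugates $\Delta_M$ to the direct integral of the $\Delta_\theta$. Hence
\[
\langle e^{-t\Delta_{M}} f, g\rangle = (2\pi)^{-d}\int_{\mathbb{T}^{d}} \langle e^{-t\Delta_{\theta}} \mathcal{U}f(\theta), \mathcal{U}g(\theta)\rangle_{L^{2}(M_{0})}\, \dd\theta .
\]
We record the regularity dictionary: decay of $f$ in $B^{s,r}$ translates into $C^{r}$-type regularity of $\theta\mapsto \mathcal{U}f(\theta)\in H^{s}(M_0)$, with norms controlled by $\|f\|_{B^{s,r}}$. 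This is essentially the definition of the spaces in \S\ref{ssection:functional-spaces}.

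The spectral analysis goes as follows. The family $\theta\mapsto\Delta_{\theta}$ is analytic, and each $\Delta_\theta$ is nonnegative. Since $\rho$ is surjective, $\ker \Delta_{\theta} = 0$ for $\theta \neq 0$, so there is a uniform gap: $\Delta_\theta \geq c>0$ for $\theta$ outside a small ball $B(0,\delta)$. At $\theta=0$ the eigenvalue $0$ is simple, with eigenfunction $\mathbf{1}$. Kato perturbation theory then gives, for $|\theta|<\delta$, an analytic simple eigenvalue $\lambda(\theta)$ with analytic rank-one projector $\Pi_\theta$, and $\Delta_\theta \geq c$ on $\operatorname{ran}(1-\Pi_\theta)$. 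Computing the second-order perturbation, $\lambda(0)=0$ and $\nabla\lambda(0)=0$, because $\lambda$ is even by the symmetry $\overline{\Delta_\theta}=\Delta_{-\theta}$. The Hessian is $\nabla^2\lambda(0)=2Q$, where $Q$ is positive definite and given by the Dirichlet-type (stable norm) quadratic form $Q(\xi)=\vol(M_0)^{-1}\min_{u}\int_{M_0}|\xi\cdot d\phi+du|^2$. So $\lambda(\theta) = \langle Q\theta,\theta\rangle + O(|\theta|^{3})$.

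We now split the integral with a cutoff $\chi(\theta)$ supported in $B(0,\delta)$ and equal to $1$ near $0$. On the gapped part, namely $(1-\chi)$ together with the $(1-\Pi_\theta)$ component, we get the bound $e^{-ct}\|f\|_{L^2}\|g\|_{L^2}$ by Plancherel. This is the first term of $\mathcal{R}_N$. The main term is
\[
(2\pi)^{-d}\int \chi(\theta)\, e^{-t\lambda(\theta)} \langle \Pi_\theta \mathcal{U}f(\theta),\mathcal{U}g(\theta)\rangle\, \dd\theta .
\]
We write $e^{-t\lambda(\theta)}=e^{-tQ(\theta)}e^{-t(\lambda-Q)(\theta)}$ and use Morse-lemma coordinates $\theta\mapsto\eta$ with $\lambda=Q(\eta)$. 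This is legitimate since $\lambda$ is analytic with nondegenerate minimum at $0$. The main term becomes $\int e^{-tQ(\eta)} a(\eta)\,\dd\eta$ with amplitude $a(\eta)=\chi J \langle \Pi \mathcal{U}f,\mathcal{U}g\rangle$, which is bilinear in $(f,g)$. Laplace's method then gives
\[
t^{d/2}\int e^{-tQ}a = \sum_{j<N} c_{j} t^{-j} (\text{order-}2j\text{ Taylor data of } a \text{ at } 0) + O\bigl(t^{-N}\|a\|_{C^{2N+d+1}}\bigr).
\]
Only even orders survive by parity, which is why only integer powers $t^{-j}$ appear. Here we use the Taylor remainder of order $2N$ and bound the remainder integral via the $C^{2N}$ norm; the extra $d+1$ derivatives absorb the passage from sup norms to $L^2$-based Sobolev control in $\theta$ (Sobolev embedding on $\mathbb{T}^d$). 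Defining $\mathcal{C}_j$ as the bilinear form $f\otimes g\mapsto c_j\,\partial^{2j}a(0)$ gives $|\langle\mathcal{C}_j,f\otimes g\rangle|\le C\|f\|_{B^{s,2j}}\|g\|_{B^{s,2j}}$. The leading term is $c_0 a(0)=c_0\langle \Pi_0\mathcal{U}f(0),\mathcal{U}g(0)\rangle$. Since $\Pi_0$ is the projector onto the constants and $\mathcal{U}f(0)$ is the periodization of $f$, this equals $\kappa\int_M f\int_M g$ with $\kappa=(4\pi)^{-d/2}(\det Q)^{-1/2}\vol(M_0)^{-1}$, up to normalization. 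Schwartz kernel considerations then identify these bilinear forms with distributions on $M\times M$.

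The main obstacle is the regularity step: proving that $\theta\mapsto\Pi_\theta\,\mathcal{U}f(\theta)$ is $C^{k}$, or $H^{k}$ in $\theta$, with norm bounded by $\|f\|_{B^{s,k}}$ for the exact indices $2j$ and $2N+d+1$ claimed. The plan is to note that $\partial_\theta^\alpha\,\mathcal{U}f = \mathcal{U}\bigl((-i\phi)^\alpha f\bigr)$. Hence $\theta$-derivatives correspond to multiplying by polynomial weights in $\phi$, which is what the $B^{s,r}$ norms measure. Combined with the analyticity of $\Pi_\theta$ as a bounded map on $H^s(M_0)$, this yields the estimates. The remainder index $2N+d+1$ is then obtained by applying the Laplace remainder with $d+1$ extra derivatives to control sup norms by Sobolev norms in $\theta$.
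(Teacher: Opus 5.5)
Your proposal is correct and follows essentially the same route as the paper. The steps match: Floquet--Parseval splitting, a uniform gap away from $\btheta=\mathbf{0}$ coming from surjectivity of $\rho$, and a simple bottom eigenvalue with a nondegenerate minimum. You also trade $\btheta$-derivatives for polynomial weights, where your $\phi$ plays the role of the paper's $(H_1,\dots,H_d)$ from Proposition~\ref{prop:differentiation-btheta}, and finish with a Laplace/stationary-phase expansion and the Schwartz kernel theorem.

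The differences are only technical. You use one global gauge $e^{i\btheta\cdot\phi}$ instead of local harmonic trivializations glued by a partition of unity. You use the Morse lemma plus an exact Gaussian Laplace expansion instead of H\"ormander's stationary phase with phase $i\lambda_0$. Your version actually controls the remainder by $C^{2N}$ norms in $\btheta$, hence by $B^{s,2N}$ norms of $f$ and $g$, so the extra $d+1$ derivatives, and the Sobolev-embedding justification you give for them, are not needed.
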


That \eqref{equation:decay-correlation-0} holds for functions in $L^{2}(M) \cap B^{s,2N+d+1}(M)$ follows from the density of $C_{\mathrm{comp}}^{\infty}(M)$ in this space, together with the bounds in \eqref{eq:decay-estimates}. The
constant $\kappa>0$ is explicit (see \eqref{equation:expression-cj}) and is related to the determinant of a covariance matrix of the first eigenvalue of an operator that appears after decomposing the Laplacian using Floquet theory. The distributions are shown to be nonzero in Lemma \ref{lemma:cj-non-zero}, and we explicitly compute how $\mathcal{C}_{1}$ acts, see \eqref{equation:c1}.

The dual expansion above captures the large-time behavior of the heat kernel in the weak sense. Our second result upgrades this to a local, pointwise expansion: it describes the asymptotics of the transition density when both endpoints remain in fixed compact subsets of the cover.

\begin{theorem}
\label{theorem:main1-local}
For every pair of compact sets $K_{x},K_{y} \subset M$, every $\ell_{x},\ell_{y} \geq 0$, and integers $N \geq 1$, there exist smooth functions $k_{j} \in C^{\infty}(K_{x}\times K_{y})$, $j=1,\ldots,N-1$ such that in $C^{\ell_{x},\ell_{y}}(K_{x}\times K_{y})$:
\[
t^{d/2}H_{M}(t,x,y)=\kappa+\sum_{j=1}^{N-1}t^{-j}k_{j}(x,y)+R_{N}(t,x,y),
\]
where the following holds: 
\[ 
\|R_{N}(t)\|_{C^{\ell_{x},\ell_{y}}(K_{x} \times K_{y})} \leq C_{K_{x},K_{y},N,\ell_{x},\ell_{y}}(e^{-ct}+\langle t\rangle^{-N}), \quad t \geq 1.
\]
\end{theorem}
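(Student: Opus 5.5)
We will prove Theorem~\ref{theorem:main1-local} by the Floquet--Bloch decomposition that underlies Theorem~\ref{theorem:main1}, now keeping track of Schwartz kernels and their derivatives instead of pairings against $B^{s,r}$ functions. Write $\mathbb{T}^{d}=\R^{d}/2\pi\Z^{d}$ for the dual torus and let $\Delta_{\theta}$, $\theta\in\mathbb{T}^{d}$, be the twisted Laplacians acting on sections of the flat line bundles $L_{\theta}\to M_{0}$. The Floquet formula expresses the heat kernel as
\[
H_{M}(t,x,\tau_{\kk}y)=(2\pi)^{-d}\int_{\mathbb{T}^{d}}e^{i\langle\theta,\kk\rangle}H_{\theta}(t,\pi(x),\pi(y))\,\dd\theta,
\]
with $x,y$ in a fixed fundamental domain $F$. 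Since $K_{x}$ and $K_{y}$ are compact, each meets only finitely many translates $\tau_{\kk}F$. It therefore suffices to prove the expansion for $x,y\in\overline{F}$ and each fixed $\kk$ in a finite set, with the $C^{\ell}$ seminorms taken in local charts.

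Next we split the $\theta$-integral with a cutoff $\chi(\theta)$ supported near $\theta=0$. For $\theta$ away from $0$ there is a uniform spectral gap $\inf\operatorname{spec}\Delta_{\theta}\geq c>0$, because $\lambda_{0}(\theta)=0$ only at $\theta=0$, since $\rho$ is surjective. Parabolic smoothing then gives $\|H_{\theta}(t)\|_{C^{\ell_{x},\ell_{y}}(M_{0}\times M_{0})}\leq Ce^{-ct}$ for $t\geq1$, uniformly in $\theta$. We see this by writing $e^{-t\Delta_{\theta}}=e^{-\frac12\Delta_{\theta}}e^{-(t-1)\Delta_{\theta}}e^{-\frac12\Delta_{\theta}}$ and using Sobolev embedding together with the smooth dependence of $\Delta_{\theta}$ on $\theta$. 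This contributes to the $e^{-ct}$ part of the remainder. Near $\theta=0$, analytic perturbation theory gives
\[
H_{\theta}(t)=e^{-t\lambda(\theta)}\,\Pi_{\theta}+O_{C^{\ell}}(e^{-ct}),
\]
where $\lambda(\theta)=\langle Q\theta,\theta\rangle+O(|\theta|^{3})$ with $Q$ positive definite (the covariance matrix), and $\Pi_{\theta}$ is the rank-one spectral projector, whose kernel $\phi_{\theta}(x)\overline{\phi_{\theta}(y)}$ is smooth jointly in $(\theta,x,y)$.

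The main term is
\[
t^{d/2}\int\chi(\theta)e^{i\langle\theta,\kk\rangle}e^{-t\lambda(\theta)}\phi_{\theta}(x)\overline{\phi_{\theta}(y)}\,\dd\theta .
\]
We rescale $\theta=\eta/\sqrt t$ and Taylor expand $e^{-t(\lambda-\langle Q\theta,\theta\rangle)}$, $e^{i\langle\theta,\kk\rangle}$ and the $C^{\ell}$-valued function $\phi_{\theta}\otimes\overline{\phi_{\theta}}$ in $\theta$. Laplace's method then yields an expansion in powers $t^{-m/2}$ with coefficients in $C^{\ell_{x},\ell_{y}}$, and the odd powers vanish by parity of the Gaussian moments. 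The leading coefficient is $(2\pi)^{-d}\int e^{-\langle Q\eta,\eta\rangle}\dd\eta\cdot|\phi_{0}|^{2}$. Since $\phi_{0}=\vol(M_{0})^{-1/2}$, this equals the constant $\kappa$ of Theorem~\ref{theorem:main1}. The coefficients $k_{j}(x,y)$ are polynomials in $\kk$ with smooth coefficients, which explains why they depend on the compact sets $K_{x},K_{y}$.

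The main obstacle is a remainder bound in $C^{\ell_{x},\ell_{y}}$ that is uniform in $\theta$. We need $\theta\mapsto\phi_{\theta}$ to be smooth as a map into $C^{\ell}(M_{0})$, and not merely into $L^{2}$. To get this we would use the Riesz projector $\Pi_{\theta}=\frac{1}{2\pi i}\oint(z-\Delta_{\theta})^{-1}\dd z$ together with elliptic regularity: the resolvent is smooth in $\theta$ as a map $H^{s}\to H^{s+2}$ for every $s$. We would then bound the Taylor remainder in the standard way, obtaining $O(t^{-N})$ after multiplying by $t^{d/2}$ and including enough terms.
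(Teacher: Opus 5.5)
Your proposal is correct and follows essentially the same route as the paper: Floquet decomposition of the heat kernel, then uniform $C^{\ell_x,\ell_y}$ control (via the splitting $e^{-\Delta_{\btheta}/2}e^{-(t-1)\Delta_{\btheta}}e^{-\Delta_{\btheta}/2}$) of both the $\btheta$-region away from $\mathbf{0}$ and the non-leading spectral part, and finally a Laplace expansion of the rank-one term $e^{-t\lambda_0(\btheta)}\Pi_0(\btheta)$ near $\btheta=\mathbf{0}$. The differences are only bookkeeping: you run the Laplace method by hand via $\btheta=\eta/\sqrt{t}$ and a parity argument (as the paper does for Theorem~\ref{theorem:main1-global}) where the paper quotes H\"ormander's stationary phase lemma, and you track the displacement with a fundamental domain and the factor $e^{i\langle\btheta,\kk\rangle}$ where the paper uses the smooth trivializing sections $s_{\btheta}(x)\overline{s_{\btheta}(y)}$; these sections are also what make your claim ``smooth in $\btheta$ as a map $H^s\to H^{s+2}$'' meaningful on a fixed space.
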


While the local expansion corresponds to observing the heat kernel on bounded spatial scales, the asymptotic behavior changes when the endpoints are allowed to drift apart with time. Our final result captures the full diffusive behavior of the heat kernel. Specifically, it establishes a uniform asymptotic expansion when the displacement between the endpoints is of the natural diffusive order $|\n-\mathbf{m}|=\mathcal{O}(\sqrt{t})$, revealing the Gaussian profile that governs long-time heat propagation on the Abelian cover.

\begin{theorem}
\label{theorem:main1-global}
Let $D \subset M$ be a relatively compact open subset with smooth boundary, such that $\pi \colon D \to M_{0}$ is surjective. Then, there exists a positive definite matrix $H$, such that for every $R>0$, every integer $N \geq 1$, and every $\ell_{x},\ell_{y} \geq 0$, there are $P_{j} \in C^{\infty} (\R^{d} \times \overline{D} \times \overline{D})$, $j=1, \ldots, N-1$, polynomial in the first variable, such that for $t \geq 1$, $\n,\mathbf{m} \in \Z^{d}$, and $x_{0},y_{0} \in \overline{D}$ satisfying $|\n-\mathbf{m}| \leq R \sqrt{t}$:
\[
\begin{split}
    t^{d/2} H_{M}(t,\tau_{\n}x_{0},\tau_{\mathbf{m}}y_{0})=&\exp \left(-\frac{1}{2 t}\langle H^{-1} (\n-\mathbf{m}), \n-\mathbf{m}\rangle \right) \\ &\times 
    \left[\kappa +\sum_{j=1}^{N-1} t^{-j/2} P_{j}\left(\frac{\n-\mathbf{m}}{\sqrt{t}}, x_{0}, y_{0}\right)\right]+R_{N} (t, \n-\mathbf{m}, x_{0},y_{0}),
\end{split}
\]
where the following holds:
\[
\sup _{|\n| \leq R \sqrt{t}} \|R_{N}(t, \n, \bullet,\bullet)\|_{C^{\ell_{x},\ell_{y}}(\overline{D}\times \overline{D})} \leq C_{R, N,\ell_{x},\ell_{y}}(e^{-c t}+t^{-N / 2}).
\]
\end{theorem}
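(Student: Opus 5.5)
We will prove Theorem \ref{theorem:main1-global} through the Floquet--Bloch decomposition of $\Delta_M$, working directly with the kernel rather than with the dual pairing. For $\btheta \in \mathbb{T}^d = \R^d/2\pi\Z^d$, let $\Delta_{\btheta}$ denote the twisted Laplacian on $M_0$ acting on sections of the flat line bundle $L_{\btheta}$ associated with the character $\mathbf{k} \mapsto e^{i\langle \btheta,\mathbf{k}\rangle}\circ\rho$. Fix the fundamental domain $D$; the surjectivity of $\pi|_D$ lets us represent every point of $M$ as $\tau_{\n}x_0$ with $x_0 \in \overline{D}$. The Floquet inversion formula then gives
\[
H_M(t,\tau_{\n}x_0,\tau_{\mathbf{m}}y_0) = (2\pi)^{-d}\int_{\mathbb{T}^d} e^{i\langle \btheta,\n-\mathbf{m}\rangle} H_{\btheta}(t,x_0,y_0)\,\dd\btheta,
\]
where $H_{\btheta}$ is the heat kernel of $\Delta_{\btheta}$, written in a trivialization over $\overline{D}$ that depends smoothly (indeed analytically) on $\btheta$.

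The argument then proceeds in four steps.

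\textbf{Spectral splitting.} At $\btheta=0$ the bottom eigenvalue is $0$ and it is simple. Away from $0$ it is strictly positive: if $\Delta_{\btheta}u=0$, then $u$ is a flat section, which forces the character to be trivial because $\rho$ is surjective. By analytic perturbation theory there are a neighbourhood $U$ of $0$, an analytic eigenvalue $\lambda(\btheta)$ and a rank-one projector $\Pi(\btheta)$ such that $\lambda(\btheta) = \tfrac12\langle H\btheta,\btheta\rangle + O(|\btheta|^3)$, with $H$ positive definite. The Hessian is computed from the second-order perturbation formula; it is positive because $\lambda \ge 0$ and $\lambda(\btheta)>0$ for $\btheta\neq 0$, and nondegeneracy follows from the standard variational formula $\langle H\xi,\xi\rangle \propto \inf\|\dd f+\xi\cdot\omega\|^2$ over harmonic representatives. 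Everything else in the spectrum of $\Delta_{\btheta}$, for $\btheta\in U$, and the whole spectrum of $\Delta_{\btheta}$ for $\btheta \notin U$, lies in $[c,\infty)$ for some $c>0$.

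\textbf{Exponentially small contributions.} The non-principal part of the integrand contributes $O(e^{-ct})$ in $C^{\ell_x,\ell_y}(\overline{D}\times\overline{D})$, uniformly in $\n,\mathbf{m}$. This follows from parabolic smoothing: write $e^{-t\Delta_{\btheta}}=e^{-\Delta_{\btheta}}e^{-(t-2)\Delta_{\btheta}}e^{-\Delta_{\btheta}}$, so that the outer factors map $L^2\to C^\ell$ and back uniformly in $\btheta$, while the middle factor is bounded by $e^{-c(t-2)}$ on the relevant spectral subspace.

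\textbf{Principal term.} It remains to treat
\[
(2\pi)^{-d}\int_U \chi(\btheta) e^{i\langle\btheta,\n-\mathbf{m}\rangle - t\lambda(\btheta)} \Pi(\btheta)(x_0,y_0)\,\dd\btheta,
\]
with $\chi$ a cutoff. Rescale $\btheta=\xi/\sqrt t$ and set $\mathbf{v}=(\n-\mathbf{m})/\sqrt t$, so that $|\mathbf{v}|\le R$. Expand
\[
t\lambda(\xi/\sqrt t) = \tfrac12\langle H\xi,\xi\rangle + \sum_{k\ge1} t^{-k/2} q_{k+2}(\xi),
\]
where $q_{k+2}$ are homogeneous polynomials. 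Taylor expand $e^{-\sum_k t^{-k/2}q_{k+2}}\,\Pi(\xi/\sqrt t)(x_0,y_0)$ to order $N$ in $t^{-1/2}$; the coefficients are polynomials in $\xi$ with coefficients smooth in $(x_0,y_0)$. The Jacobian $t^{-d/2}$ cancels the prefactor $t^{d/2}$. The Gaussian integrals $\int \xi^\alpha e^{i\langle\xi,\mathbf{v}\rangle - \frac12\langle H\xi,\xi\rangle}\,\dd\xi$ equal $e^{-\frac12\langle H^{-1}\mathbf{v},\mathbf{v}\rangle}$ times a polynomial in $\mathbf{v}$, which produces the $P_j$. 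The leading term is $(2\pi)^{-d}(2\pi)^{d/2}(\det H)^{-1/2}\Pi(0)(x_0,y_0)$. Since $\Pi(0)$ is the projector onto constants, $\Pi(0)(x_0,y_0)=\vol(M_0)^{-1}$, and this leading constant is $\kappa$. Derivatives in $x_0,y_0$ pass through because $\Pi$ is analytic in $\btheta$ with values in smoothing operators.

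\textbf{Main obstacle: uniform remainder control.} The difficulty is controlling the remainder uniformly for $|\mathbf{v}|\le R$. On $|\xi|\le \delta\sqrt t$ we have $\Re\, t\lambda(\xi/\sqrt t)\ge c|\xi|^2$. The Taylor remainder is therefore bounded by $t^{-N/2}\langle\xi\rangle^{C_N}e^{-c|\xi|^2}$, which is integrable uniformly in $\mathbf{v}$ because the oscillating factor has modulus one. The tails of the Gaussian integrals outside $|\xi|\le\delta\sqrt t$, together with the complement of $\chi\equiv 1$, are $O(e^{-ct})$. Together these give the bound $e^{-ct}+t^{-N/2}$. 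The dependence on $R$ enters only through the polynomial growth of the $P_j$, so every constant is uniform in $\n-\mathbf{m}$ in the stated range.
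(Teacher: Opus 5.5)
Your proposal is correct and follows essentially the same route as the paper. The common steps are: the Floquet representation of the kernel; exponential smallness of everything except the bottom band near $\btheta=\mathbf{0}$, via parabolic smoothing and the spectral gap; and then the rescaling $\btheta=\xi/\sqrt{t}$ with Taylor expansion of phase and amplitude and Gaussian Fourier transforms. Your remainder bound, which uses $t\lambda(\xi/\sqrt{t})\ge c|\xi|^2$ directly to dominate the Taylor remainder of the exponential, is a slightly cleaner version of the paper's argument that absorbs a factor $e^{\eps|\mathbf{u}|^2}$ into the Gaussian.
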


The theorem is stated for pairs $(\n,x_{0}) \in \Z^{d} \times \overline{D}$, rather than for points $x \in M$. This avoids any ambiguity caused by the possibility that a point $x \in M$ may be written in more than one way (i.e., it has more than one preimage under $\tau_{\n}$ over $D$). If $D$ is chosen to be a genuine fundamental domain, then this representation is unique up to boundary ambiguity. For a general relatively compact $D$ with $\pi(D)=M_{0}$, uniqueness is not required: the estimate holds uniformly for every pair $(\n,x_{0})$. 

The matrix $H$ is the Hessian of the lowest eigenvalue of the operator that is obtained after decomposing $\Delta_{M}$ using Floquet theory, see Lemma \ref{lemma:resonance-non-degenerate} and the proof of the theorem for further details.

The preceding result also admits a probabilistic interpretation. Up to the conventional normalization of the Laplacian, $H_{M}~d\vol_{M}$ is the probability that Brownian motion starting at $x$ lies near $y$ at time $t$, equivalently, $H_{M}$ is its transition density with respect to the Riemannian volume measure. Hence, Theorem \ref{theorem:main1-global} is a refined local central limit theorem for its displacement in the Abelian covering direction: on the diffusive scale $|\n-\mathbf{m}|=\mathcal{O}(\sqrt{t})$, this displacement is governed by a centered Gaussian law whose covariance is determined by $H$, with the functions $P_{j}$ providing higher-order corrections. The local expansion of Theorem \ref{theorem:main1-local} describes the corresponding transition probabilities when both endpoints remain in fixed compact subsets of the cover.

\subsection{Decay on Abelian covers of principal bundles} \label{subsection:results-isometric}

We now consider a more general setting. Let $p_{0} \colon P_{0} \to M_{0}$ be a principal $G$-bundle where $G$ is a compact connected Lie group, and assume that $P_{0}$ is equipped with a $G$-equivariant connection $\nabla^{P_{0}}$. The bundle $P_{0}$ carries a canonical volume measure $\nu_{0}$ given locally by $p_{0}^{*}(\dd \vol_{M_{0}}) \wedge dg$, where $dg$ is the Haar probability measure of $G$. 

Let $\mathrm{Ad}(P_{0}):= P_{0}\times_{\mathrm{Ad}}\mathfrak{g} \to M_{0}$ be the adjoint bundle associated to the adjoint representation $\mathrm{Ad} \colon G \to \mathrm{End}(\mathfrak{g})$. We will say that the curvature $F_{\mathrm{Ad}(P_{0})} \in C^{\infty} (M_{0}, \Lambda^{2} T^{*} M_{0} \otimes \mathrm{Ad}(P_{0}))$ of $\nabla^{P_{0}}$ is \emph{nondegenerate at $x$} if the following holds:
\[
\mathrm{Span} \{ F_{\operatorname{Ad}(P_{0})}(x)(X, Y) : X, Y \in T_x M_{0} \}=\mathrm{Ad}(P_{0})_{x}.
\]
The curvature is globally nondegenerate if it is nondegenerate at every $x \in M_{0}$. 

Now let $P := \pi^*P_0 \to M$ be the pullback bundle over $M$ (where as in \S\ref{subsection:results-abelian}, $\pi \colon M \to M_0$ is a $\Z^d$-cover). Note that $P$ is also a $(G \times \Z^d)$-bundle over $M_0$, and $\mathbb{Z}^d$-bundle over $P_0$ (see \cite[\S2.3]{Cekic-Lefeuvre-Munoz-Thon-26}), that is the following commutative diagram holds:
\[
\begin{tikzcd}
P = \pi^*P_0 \arrow{r}{} \arrow[swap]{dr}{} \arrow[swap]{d}{p} & P_0 \arrow{d}{p_{0}} \\
M \arrow{r}{\pi} & M_0
\end{tikzcd}
\]
Abusing of the notation, the map $P \to P_{0}$ will be also called $\pi$.

We can lift $\nabla^{P_{0}}$ to obtain a connection $\nabla=\pi^{*}\nabla^{P_{0}}$ on $P$. Viewing $P \to M_{0}$ as a $G\times \Z^{d}$ bundle, the holonomy group based at $x_{0} \in M_{0}$
\[
\mathrm{Hol}(P, \nabla)=\{\tau_{\gamma}: \gamma \text{ is a loop based at }x_{0}\}.
\]
where $\tau_{\gamma} \colon P_{x_{0}} \to P_{x_{0}}$ denotes parallel transport along $\gamma$. After choosing a point in $P_{x_0}$, and hence identifying the fiber $P_{x_0}$ with $G \times \Z^d$, we can identify it with a subgroup of $G \times \Z^d$. We refer to \S\ref{subsection:curvatures} for more details.

The measure $\nu_{0}$ can be lifted to a measure $\nu$ (of infinite volume when $d > 0$). In the following result, given $f \in C^\infty_{\comp}(P)$, we let $f_{\bk=\mathbf{0}} \in C^\infty_{\comp}(M)$ be the function obtained by averaging in the $G$-fibers of $P$, namely
\[
f_{\bk=\mathbf{0}}(x) := \int_{P_x} f(w \cdot g) \dd g, \qquad x \in M
\]
where $w \in P_x$ is arbitrary. The meaning of the index $\bk=\mathbf{0}$ will become clear later, when introducing the Borel--Weil calculus on principal bundles, see \S\ref{ssection:bw-calculus}. 

Using $\nabla$, we obtain a horizontal distribution $\mathbb {H}_{P} \subset TP$. The restriction of the exterior derivative to $\HH_{P}$ defines the horizontal exterior derivative
\[
d_{\mathbb{H}_{P}}\colon C^{\infty}(P) \to C^{\infty}(P,\mathbb{H}_{P}^{*}), \qquad d_{\mathbb{H}_{P}}f:=df|_{\mathbb{H}_{P}}.
\]
The \emph{horizontal Laplacian} on $P$ is
\[
\Delta_P^{\mathbb H} :=(d_{\mathbb H_{P}})^*d_{\mathbb H_{P}},
\]
where $(d_{\mathbb H_{P}})^*$ denotes the formal $L^2$-adjoint in $P$, we refer to \S\ref{subsection:horizontal-laplacian} for more details. Let $H_{P}^{\mathbb{H}}(t,u,v)$ denote the corresponding heat kernel. Probabilistically, the diffusion generated by $-\Delta_{P}^{\HH}$ is the horizontal Brownian motion on $P$.

We shall establish the following analog of Theorem \ref{theorem:main1}:

\begin{theorem}
\label{theorem:main2}
Assume that $\mathrm{Hol}(P, \nabla)$ is dense in $G \times \Z^{d}$, and the curvature of $\nabla^{P_{0}}$ is globally nondegenerate. Then, there exist a constant $\kappa > 0$, and $\mathcal{A}_{j} \in \mathcal{D}'(P \times P)$ such that for all integers $N \geq 1$, we have in $\mathcal{D}'(P \times P)$:
\[
t^{d/2} H_{P}^{\mathbb{H}}(t)= \kappa  \mathbf{1}_{P}\otimes \mathbf{1}_{P} + \sum_{j =1}^{N-1} t^{-j} \mathcal{A}_j + \mathcal{R}_N(t),
\]
where the following estimates hold: for all $s > 0$, there exists a constant $C > 0$ such that
\begin{equation}
\begin{split}
|\langle \mathcal{A}_j,f \otimes g\rangle| & \leq C\|f_{\bk=\mathbf{0}}\|_{B^{s,2j}}\|g_{\bk=\mathbf{0}}\|_{B^{s,2j}},\quad 1 \leq j \leq N - 1,\\
|\langle \mathcal{R}_N,f \otimes g\rangle| & \leq C \left(e^{-Ct}\|f\|_{L^{2}}\|g\|_{L^{2}} + \langle t \rangle^{-N}\|f_{\bk=\mathbf{0}}\|_{B^{s,2N+d+1}}\|g_{\bk=\mathbf{0}}\|_{B^{s,2N+d+1}}\right), \quad \forall t \geq 0.
\end{split}
\end{equation}
\end{theorem}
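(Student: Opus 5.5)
We will reduce Theorem \ref{theorem:main2} to Theorem \ref{theorem:main1} using the Borel--Weil decomposition of $L^{2}(P)$ along the $G$-fibers. Since $G$ acts on $P$ on the right, commuting with the horizontal distribution $\HH_{P}$ and preserving $\nu$, the operator $\Delta_{P}^{\HH}$ commutes with the $G$-action. We therefore decompose
\[
L^{2}(P)=\bigoplus_{\bk} L^{2}_{\bk}(P),
\]
indexed by dominant weights $\bk$ of $G$. Each $L^{2}_{\bk}(P)$ is identified with sections of the associated vector bundles $P\times_{G}V_{\bk}$ over $M$, and on it $\Delta_P^{\HH}$ acts as the connection Laplacian $\nabla_{\bk}^{*}\nabla_{\bk}$ induced by $\nabla$. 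For $\bk=\mathbf{0}$ the bundle is trivial with trivial connection, and the horizontal Laplacian restricted to $G$-invariant functions is just $\Delta_{M}$ pulled back by $p$. Thus
\[
\langle H_{P}^{\HH}(t)f,g\rangle=\langle H_{M}(t)f_{\bk=\mathbf{0}},g_{\bk=\mathbf{0}}\rangle_{L^{2}(M)}+\sum_{\bk\neq \mathbf{0}}\langle e^{-t\Delta_{\bk}}f_{\bk},g_{\bk}\rangle .
\]
Applying Theorem \ref{theorem:main1} to the first term produces the constant $\kappa$ (the same one, since $\nu_0$ uses the Haar probability measure, so $\mathbf{1}_{P}\otimes\mathbf{1}_{P}$ pairs with $f\otimes g$ exactly as $\mathbf{1}_M\otimes \mathbf{1}_M$ pairs with $f_{\bk=\mathbf{0}}\otimes g_{\bk=\mathbf{0}}$). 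It also produces $\mathcal{A}_{j}:=(p\times p)^{*}\mathcal{C}_{j}$, with the stated bounds in terms of $f_{\bk=\mathbf{0}},g_{\bk=\mathbf{0}}$.

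It remains to show a uniform spectral gap: there is $c>0$ with $\Delta_{\bk}\geq c$ on $L^{2}_{\bk}$ for all $\bk\neq\mathbf{0}$. Given this, the sum is bounded by $e^{-ct}\|f\|_{L^{2}}\|g\|_{L^{2}}$ via orthogonality and Plancherel, and multiplying by $t^{d/2}$ only changes the constant, so it is absorbed in the remainder $\mathcal{R}_N$. To prove the gap, we apply Floquet theory on the $\Z^{d}$-cover, as in the proof of Theorem \ref{theorem:main1}. Each $\Delta_{\bk}$ is unitarily equivalent to a direct integral over $\theta\in\mathbb{T}^{d}$ of twisted connection Laplacians $\Delta_{\bk,\theta}$ on the compact base $M_{0}$. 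These act on $P_0\times_G V_{\bk}\otimes L_{\theta}$, where $L_{\theta}$ is the flat line bundle of character $\theta$. So we need $\inf_{\bk\neq \mathbf{0},\theta}\lambda_{0}(\Delta_{\bk,\theta})>0$. Pointwise positivity comes from the holonomy assumption. If $\Delta_{\bk,\theta}u=0$, then $u$ is parallel, and hence invariant under the holonomy of $(P_{0}\times_{G}V_{\bk})\otimes L_\theta$. This holonomy is the image of $\mathrm{Hol}(P,\nabla)$ under the representation $\bk\otimes\theta$ of $G\times\Z^{d}$. Density of the holonomy then forces a fixed vector of the irreducible representation $\bk\otimes\theta$, which is impossible unless it is trivial, i.e. $\bk=\mathbf{0},\theta=0$. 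Compactness in $\theta$ together with continuity of $\lambda_0$ then gives the gap for each fixed $\bk$.

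The main obstacle is uniformity as $|\bk|\to\infty$, and this is where global nondegeneracy of the curvature enters. We would use the Borel--Weil/semiclassical calculus of \S\ref{ssection:bw-calculus}, viewing $\Delta_P^{\HH}$ as a sub-Laplacian. Nondegeneracy means that the brackets of horizontal fields span the vertical directions, so $\HH_P$ is bracket generating of step $2$. We then expect a subelliptic estimate $\|u\|_{H^{1/2}}^{2}\lesssim \langle\Delta_{P}^{\HH}u,u\rangle+\|u\|^{2}$, uniform in $\theta$ and local on $M_{0}$. On $L^{2}_{\bk}$ the vertical Casimir acts by a scalar comparable to $|\bk|^{2}$, which yields $\lambda_{0}(\Delta_{\bk,\theta})\gtrsim |\bk|-C$. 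Alternatively, one can argue by a direct commutator estimate: $\|F(X,Y)u\|\leq C\|\nabla u\|(\|\nabla u\|+\|u\|)$-type bounds combined with $|F_{\bk}u|\gtrsim|\bk|\,|u|$ from nondegeneracy. Either way the eigenvalues diverge as $|\bk|\to\infty$. Hence only finitely many $\bk$ need the compactness argument above, and this gives the uniform gap $c>0$, concluding the proof.
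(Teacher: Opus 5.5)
Your proposal is correct. It has the same architecture as the paper's proof:

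\begin{itemize}
\item split off the $G$-invariant mode, on which $\Delta_P^{\HH}$ is $p^*\Delta_M$, so that Theorem \ref{theorem:main1} applies with $\mathcal{A}_j=(p\times p)^*\mathcal{C}_j$;
\item show that every nontrivial $G$-type has a spectral gap uniform in $\btheta$, using holonomy density for bounded $\bk$ and curvature nondegeneracy for large $|\bk|$.
\end{itemize}

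Decomposing first in $G$ and then in $\Z^d$, with associated bundles $P_0\times_G V_{\bk}$ instead of Borel--Weil line bundles on the flag bundle, makes no difference to this reduction.

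The genuine difference is how you get the large-$|\bk|$ bound. The paper proves Theorem \ref{theorem:twisted-CL} by twisting the Ceki\'c--Lefeuvre argument. On the flag bundle the curvature of $\nabla_{\btheta,\bk}$ is the scalar $\bk\cdot\mathbf{F}_{\overline{\nabla}}$, since the twist by the closed form $\eta_{\btheta}$ is flat. The commutator identity of Lemma \ref{lemma:identity}, applied to fiberwise holomorphic sections, then gives the explicit rate $(F_{\min}/2-\eps)|\bk|$.

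You instead use the step-two subelliptic estimate on the compact $P_0$. This works once you record the two facts that make it rigorous:
\begin{itemize}
\item $\Delta_{P_0}=\Delta^{\HH}_{P_0}+\Delta^{\V}_{P_0}$ with commuting nonnegative summands. Hence $\|u\|_{H^{1/2}}^2\gtrsim\langle(1+\Delta^{\V}_{P_0})^{1/2}u,u\rangle$, which on the $\bk$-isotypic part equals $(1+c_{\bk})^{1/2}\|u\|^2$, where $c_{\bk}\asymp|\bk|^2$ is the Casimir eigenvalue.
\item In each local trivialization in $\btheta$, $\|(d_{\HH}+i\eta_{\btheta})u\|^2\geq\frac12\|d_{\HH}u\|^2-C\|u\|^2$, which gives uniformity in $\btheta$.
\end{itemize}
Together these yield $\lambda_0(\btheta,\bk)\geq c|\bk|-C$ with nonexplicit constants. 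That is all the theorem needs; any subelliptic gain $\epsilon>0$ would already give divergence in $|\bk|$.

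What each route buys: yours is more elementary and bypasses the flag bundle and Borel--Weil calculus entirely; the paper itself already relies on a uniform-in-$\btheta$ subelliptic estimate in Lemma \ref{lemma:nontrivial-G-local}. The paper's route gives an explicit constant in terms of $F_{\min}$.

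Your holonomy step applies density directly to the irreducible representation $\rho_{\bk}\otimes e^{i\btheta\cdot\bullet}$ of $G\times\Z^d$. This is slightly more direct than the paper's argument, which only uses loops with $\rho(\gamma)=0$ (cf.\ Remark \ref{remark:holonomy}).

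Your ``alternative'' commutator sketch is not right as written, since the displayed bound is not homogeneous in $u$. The workable version is $\sum_{i,j}\|d\rho_{\bk}(F_{\mathrm{Ad}(P_{0})}(X_i,X_j))u\|^2\lesssim|\bk|\,\|\nabla u\|(\|\nabla u\|+\|u\|)$, obtained by integrating by parts once. The factor $|\bk|$ is essential: without it one would get $\lambda_0\gtrsim|\bk|^2$, which is false. The subelliptic route makes this alternative unnecessary, so the proof does not depend on it.
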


As in the case of Abelian covers, the previous theorem describes the long-time behavior of the horizontal heat kernel only in the weak sense. The hypothesis on the holonomy and the curvature are needed in the proof to obtain a spectral gap for the leading eigenvalue of a differential operator into which $\Delta_{P}^{\mathbb{H}}$ decomposes, see \S\ref{thm:spectral-gap}. We also emphasize that we only need the hypothesis on the curvature of $\nabla^{P_{0}}$ rather than the one of $\nabla$. This comes from the fact that the line bundle is flat, see the proof of Lemma \ref{lemma:identity}.     Regarding the constraint on the holonomy, we actually require a little less than the density of the full group, see Remark \ref{remark:holonomy}.

The main example to keep in mind is (as in \cite{Cekic-Lefeuvre-Munoz-Thon-26}), to consider Abelian covers the frame bundle of a closed manifold $FN_{0}$, which is a $\mathrm{SO}(n-1)$-principal bundle over the unit sphere bundle $SN_{0}$, where $\dim N_{0}=n$.

Our next result upgrades this to a local asymptotic expansion of the heat kernel itself. A notable feature of the principal bundle setting is that, under the holonomy and curvature assumptions above, the leading asymptotics are insensitive to the $G$-fiber variables: the coefficients depend only on the projections of the points to the base manifold. Thus, locally, the horizontal heat kernel exhibits the same asymptotic behavior as the heat kernel on the underlying Abelian cover.

\begin{theorem}
\label{theorem:main2-local}
Assume that $\mathrm{Hol}(P, \nabla)$ is dense in $G \times \Z^{d}$, and the curvature of $\nabla^{P_{0}}$ is globally nondegenerate. Then for every pair of compact sets $K_{u},K_{v}\subset P$, every $\ell_{u},\ell_{v}\geq 0$, and every integer $N \geq 1$, there exist smooth functions $k_{j}^{P}\in C^{\infty}(K_{u}\times K_{v})$, $j=1,\ldots,N-1$, such that in
$C^{\ell_{u},\ell_{v}}(K_{u}\times K_{v})$
\[
t^{d/2}H_{P}^{\mathbb H}(t,u,v)
=
\kappa + \sum_{j=1}^{N-1}t^{-j}k_{j}^{P}(u,v) + R_{N}^{P}(t,u,v),
\]
where the following holds:
\[
\|R_{N}^{P}(t)\|_{C^{\ell_{u},\ell_{v}}(K_{u} \times K_{v})} \leq C_{K_{u},K_{v},N,\ell_{u},\ell_{v}}(e^{-ct}+\langle t\rangle^{-N}), \quad t \geq 1.
\]
Furthermore, if $k_{j}^{M}\in C^{\infty}(p(K_{u})\times p(K_{v}))$ are the coefficients in Theorem \ref{theorem:main1-local}, then $k_{j}^{P}(u,v) = k_{j}^{M}(p(u),p(v))$.
\end{theorem}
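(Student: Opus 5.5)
We decompose the horizontal heat kernel along the $G$-fibers using the Borel--Weil decomposition of \S\ref{ssection:bw-calculus}. We write $L^{2}(P)=\bigoplus_{\bk}L^{2}_{\bk}(P)$, where $\bk$ runs over the dominant weights of $G$. The operator $\Delta_{P}^{\mathbb H}$ commutes with the right $G$-action, so it preserves each summand. On the $\bk=\mathbf{0}$ summand, i.e.\ on $G$-invariant functions $p^{*}u$, we have $\Delta_{P}^{\mathbb H}(p^{*}u)=p^{*}(\Delta_{M}u)$, because horizontal lifts of vector fields project isometrically onto $TM$. Hence the heat kernel splits as
\[
H_{P}^{\mathbb H}(t,u,v)=H_{M}(t,p(u),p(v))+\sum_{\bk\neq\mathbf{0}}H_{\bk}(t,u,v),
\]
where $H_{\bk}$ is the kernel of $e^{-t\Delta_{P}^{\mathbb H}}\Pi_{\bk}$ and $\Pi_{\bk}$ is the orthogonal projector onto $L^{2}_{\bk}(P)$. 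The first term is exactly $H_{M}$ evaluated at the projected points, since the Haar measure is a probability measure. Applying Theorem \ref{theorem:main1-local} on $p(K_{u})\times p(K_{v})$ gives the expansion with coefficients $k_{j}^{M}(p(u),p(v))$ and remainder $O(e^{-ct}+\langle t\rangle^{-N})$ in $C^{\ell,\ell}$. Derivatives in the $G$-directions vanish, and $p$ is a submersion, so the $C^{\ell_{u},\ell_{v}}$ bounds pull back directly. This yields both the expansion and the identification $k_{j}^{P}=k_{j}^{M}\circ(p\times p)$, provided the nontrivial modes are $O(e^{-ct})$ in $C^{\ell_{u},\ell_{v}}(K_{u}\times K_{v})$.

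The main step is therefore to show that the sum $\sum_{\bk\neq\mathbf{0}}H_{\bk}$ decays exponentially in smooth norms on compact sets. First, Floquet theory applied to each $L^{2}_{\bk}$ identifies $\Delta_{P}^{\mathbb H}|_{L^{2}_{\bk}}$ with a family, over $\theta\in\mathbb T^{d}$, of twisted horizontal Laplacians on sections of bundles over $M_{0}$. Next, the spectral gap theorem \S\ref{thm:spectral-gap}, which uses the holonomy density and the nondegenerate curvature of $\nabla^{P_{0}}$ (the flat line bundle factor does not affect the curvature), gives a uniform constant $\lambda_{0}>0$ with $\Delta_{P}^{\mathbb H}\geq\lambda_{0}$ on $(L^{2}_{\mathbf{0}})^{\perp}$. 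Consequently
\[
\big\|e^{-t\Delta_{P}^{\mathbb H}}(1-\Pi_{\mathbf{0}})\big\|_{L^{2}\to L^{2}}\leq e^{-\lambda_{0}t}.
\]
This bound is the same one that produces the $e^{-Ct}\|f\|_{L^{2}}\|g\|_{L^{2}}$ term in Theorem \ref{theorem:main2}.

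To pass from $L^{2}$ to pointwise $C^{\ell}$ bounds, we write, for $t\geq1$,
\[
e^{-t\Delta}(1-\Pi_{\mathbf{0}})=e^{-\Delta/2}\,\big[e^{-(t-1)\Delta}(1-\Pi_{\mathbf{0}})\big]\,e^{-\Delta/2},
\]
with $\Delta=\Delta_{P}^{\mathbb H}$; this uses that $\Pi_{\mathbf{0}}$ commutes with $\Delta$. We need $e^{-\Delta/2}$ to map $L^{2}(P)$ into $C^{\ell}(K)$ for every compact $K$. The horizontal Laplacian is only hypoelliptic, with Hörmander's bracket condition holding by the nondegenerate curvature. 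Its heat kernel at time $1/2$ is smooth, and its derivatives are locally bounded uniformly under $\Z^{d}$ translations, by periodicity and subelliptic Gaussian bounds. This gives $\|e^{-\Delta/2}\|_{L^{2}\to C^{\ell}(K)}<\infty$, and by duality $e^{-\Delta/2}$ also maps compactly supported distributions of order $\ell$ into $L^{2}$. Composing the three factors yields $\|H_{P}^{\mathbb H}-H_{M}\circ(p\times p)\|_{C^{\ell_{u},\ell_{v}}(K_{u}\times K_{v})}\leq Ce^{-\lambda_{0}t}$.

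I expect the main obstacle to be this smoothing step. One needs the uniform-in-translation hypoelliptic regularity, and one must check that the spectral gap is uniform over all $\bk\neq\mathbf{0}$ and all Floquet parameters simultaneously. If the cited gap is stated only mode by mode, I would first argue that a Casimir lower bound $\Delta_{P}^{\mathbb H}+C\ge c|\bk|^{2}$ on $L^{2}_{\bk}$ handles all large $\bk$, and then treat the finitely many remaining modes with the compactness in $\theta\in\mathbb T^{d}$.
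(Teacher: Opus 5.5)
Your proposal is correct. The $G$-invariant part is handled as in the paper, but you bound the nontrivial $G$-modes by a different route.

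\textbf{The paper's route.} The paper first Floquet-decomposes the kernel. On the compact $P_0$ it splits each twisted kernel as $H^{M}_{\btheta,i}\circ(p_0\times p_0)+E_{\btheta,i}$. It then proves $\|E_{\btheta}(t)\|_{C^{\ell_u,\ell_v}(P_0\times P_0)}\le Ce^{-ct}$ uniformly in $\btheta$ (Lemma \ref{lemma:nontrivial-G-local}). This uses the same $e^{-\Delta/2}$ sandwich as yours, but applied to the twisted operators $\Delta^{\mathbb H}_{\btheta}$, so it needs subelliptic estimates uniform in $\btheta$, which the paper justifies only briefly. Finally it reintegrates in $\btheta$ against the trivializing sections $s_{\btheta,i}$.

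\textbf{Your route.} You split off the $G$-invariant part directly on $P$. You turn the uniform gap of Theorem \ref{thm:spectral-gap} into a single bound on $L^2(P)$ via Proposition \ref{proposition:borel-floquet-weil}, and you run the sandwich on the noncompact $P$. For compact $K$ this needs only qualitative smoothness of $H_P^{\mathbb H}(1,\cdot,\cdot)$ near $K\times K$, since $\|\partial_x^\alpha H_P^{\mathbb H}(1/2,x,\cdot)\|_{L^2}^2=\partial_x^\alpha\partial_{x'}^\alpha H_P^{\mathbb H}(1,x,x')|_{x'=x}$. The subelliptic Gaussian bounds you invoke are therefore unnecessary. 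Invariance of $\Delta_P^{\mathbb H}$ under the isometries $\tau_{\n}$ already gives uniformity in translations, so your argument also yields Lemma \ref{lemma:nontrivial-G-global} at no extra cost.

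\textbf{Comparison.} Your route avoids the uniform-in-$\btheta$ subelliptic theory and all trivializations in the remainder term. The paper's route keeps every analytic step on the compact $P_0$, inside the same Floquet framework used for Theorem \ref{theorem:main1-local}.

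\textbf{Caveat on your fallback.} You do not need it, since Theorem \ref{thm:spectral-gap} is already uniform over all $\btheta\in\mathrm{U}(1)^d$ and all $\bk\neq\mathbf{0}$. As stated, however, it would fail. The horizontal Laplacian does not dominate the Casimir quadratically: for $G=\mathrm{U}(1)$ with nowhere-vanishing magnetic field, the bottom of the $k$-th mode grows only like $|k|$ (Landau levels), while the Casimir is $k^2$. The correct input for large $|\bk|$ is the linear bound of Theorem \ref{theorem:twisted-CL}. For the finitely many small modes, compactness in $\btheta$ must be combined with Lemma \ref{lemma:lambda-theta-k}, which uses the holonomy density to guarantee $\lambda_0(\btheta,\bk)>0$ at each $\btheta$.
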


As in the purely Abelian case (Theorem \ref{theorem:main1-global}), we have a global expansion which shows the influence of a Gaussian profile. Similarly as in the local one, the coefficients are entirely determined by the geometry of the base manifold, that is, they are constant along the $G$-fibers and coincide with those of the corresponding expansion on the Abelian cover $M$.

\begin{theorem}[Global Expansion]
\label{theorem:main2-global}
Assume that $\mathrm{Hol}(P, \nabla)$ is dense in $G \times \Z^{d}$, and the curvature of $\nabla^{P_{0}}$ is globally nondegenerate. Let $D\Subset M$ be an open subset with smooth boundary such that $\pi(D)=M_{0}$, and set $D_{P}:=p^{-1}(D) \subset P$. Then there exists a positive-definite matrix $H$ such that, for every $R>0$, every integer $N \geq 1$, and every $\ell_{u},\ell_{v} \geq 0$, there exist functions $P_{j}^{P} \in C^{\infty}(\R^{d} \times \overline{D_{P}}\times \overline{D_{P}})$,  $j=1,\ldots,N-1$, polynomial in their first variable, for which the following holds. For every $t \geq 1$, every $\n, \mathbf{m} \in \Z^{d}$, and every $u_{0},v_{0} \in \overline{D_{P}}$ satisfying $|\n-\mathbf{m}|\leq R\sqrt t$,
one has
\[
\begin{split}
t^{d/2} H_{P}^{\mathbb {H}} (t,\tau_{\n}u_{0},\tau_{\mathbf{m}}v_{0}) 
=&\exp\left( -\frac{1}{2t}\langle H^{-1}(\n-\mathbf{m}),\n-\mathbf{m} \rangle \right)
\\
&\times \left[ \kappa + \sum_{j=1}^{N-1} t^{-j/2} P_{j}^{P} \left(
\frac{\n-\mathbf{m}}{\sqrt t}, u_{0},v_{0} \right) \right] + R_{N}^{P}( t,\n-\mathbf{m},u_{0},v_{0}).
\end{split}
\]
Moreover,
\[
\sup_{|\mathbf{r}|\leq R\sqrt t} \| R_{N}^{P}(t,\mathbf{r},\bullet,\bullet)\|_{C^{\ell_{u},\ell_{v}} (\overline{D_{P}}\times\overline{D_{P}})} \leq C_{R,N,\ell_{u},\ell_{v}} ( e^{-ct}+t^{-N/2}).
\]
More precisely, the coefficients may be chosen to be constant in both $G$-fiber variables. If $P_{j}^M \in C^\infty (\R^d\times\overline{D}\times\overline{D})$
are the coefficients of Theorem \ref{theorem:main1-global}, then $P_{j}^{P}(\mathbf{z},u_{0},v_{0})= P_{j}^{M} (\mathbf{z},p(u_{0}),p(v_{0}))$.
\end{theorem}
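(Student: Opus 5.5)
Our plan is to reduce Theorem \ref{theorem:main2-global} to Theorem \ref{theorem:main1-global} via the Borel--Weil decomposition of $L^{2}(P)$ along the $G$-fibers, combined with the uniform spectral gap for the nontrivial modes. Let $\Pi_{\mathbf{0}}$ be fiberwise averaging, $f\mapsto f_{\bk=\mathbf{0}}\circ p$. The first step is to check that $\Delta_{P}^{\HH}$ commutes with the right $G$-action. It therefore preserves the splitting $L^{2}(P)=\mathrm{Ran}\,\Pi_{\mathbf{0}}\oplus\ker\Pi_{\mathbf{0}}$. Moreover, on $G$-invariant functions $\Delta_{P}^{\HH}(f\circ p)=(\Delta_{M}f)\circ p$. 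This holds because the horizontal lift is an isometry $\HH_{P}\to TM$ and the fiber measure is Haar probability (the identity of Lemma \ref{lemma:identity}). Writing the kernel as the sum of the two block contributions, we obtain
\[
H_{P}^{\HH}(t,u,v)=H_{M}(t,p(u),p(v))+H^{\perp}(t,u,v),
\]
where $H^{\perp}$ is the kernel of $e^{-t\Delta_{P}^{\HH}}(1-\Pi_{\mathbf{0}})$. Since $\tau_{\n}$ commutes with $p$, we have $p(\tau_{\n}u_{0})=\tau_{\n}p(u_{0})$ with $p(u_{0})\in\overline{D}$. Applying Theorem \ref{theorem:main1-global} to the first term then yields the claimed expansion with $P_{j}^{P}(\mathbf{z},u_{0},v_{0})=P_{j}^{M}(\mathbf{z},p(u_{0}),p(v_{0}))$ and the same $H$ and $\kappa$. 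The $C^{\ell_u,\ell_v}$ norms on $\overline{D_P}$ are controlled by those on $\overline{D}$, because $p$ is a Riemannian submersion with compact fibers.

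It remains to show that
\[
\sup_{\n,\mathbf{m}}\|H^{\perp}(t,\tau_{\n}\bullet,\tau_{\mathbf{m}}\bullet)\|_{C^{\ell_u,\ell_v}(\overline{D_P}\times\overline{D_P})}\leq Ce^{-ct},
\]
which is absorbed into $R_{N}^{P}$. For this we apply Floquet theory in the $\Z^{d}$ direction on each nontrivial Borel--Weil mode $\bk\neq\mathbf{0}$. This writes the kernel as an integral over $\theta\in\mathbb{T}^{d}$ of kernels of $e^{-t\Delta_{\bk,\theta}}$ on compact bundles over $M_{0}$. The hypotheses on holonomy density and curvature nondegeneracy give, through the spectral gap result (\S\ref{thm:spectral-gap}), a constant $\lambda_{0}>0$ with $\Delta_{\bk,\theta}\geq\lambda_{0}$ uniformly in $\bk\neq\mathbf{0}$ and $\theta$. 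Hence $\|e^{-t\Delta_P^{\HH}}(1-\Pi_{\mathbf{0}})\|_{L^{2}\to L^{2}}\leq e^{-\lambda_{0}t}$.

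To pass from this $L^{2}$ bound to uniform $C^{\ell}$ bounds, we factor the semigroup as
\[
e^{-t\Delta}(1-\Pi_{\mathbf{0}})=e^{-\Delta/2}\,\bigl[e^{-(t-1)\Delta}(1-\Pi_{\mathbf{0}})\bigr]\,e^{-\Delta/2}.
\]
The outer factors are smoothing and map $L^{2}$ to $C^{\ell}$ on compact sets. This uses hypoellipticity of $\Delta_{P}^{\HH}$: the curvature nondegeneracy yields Hörmander's bracket condition of step two. The bounds are uniform under the deck translations $\tau_{\n}$, since these are isometries preserving $\nabla$. The kernel evaluated at $(\tau_{\n}u,\tau_{\mathbf{m}}v)$ is then $\langle\delta_{\tau_\n u},\ldots\delta_{\tau_\mathbf{m} v}\rangle$ with the smoothing factors absorbing the derivatives. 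This gives the estimate $Ce^{-\lambda_{0}(t-1)}$ uniformly in $\n,\mathbf{m}$.

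The main obstacle is this last step: obtaining derivative bounds on the hypoelliptic heat kernel that are uniform over the whole noncompact cover. The spectral gap itself, uniform over all Borel--Weil modes and all $\theta$, we take from the earlier section. We would first try to prove uniform subelliptic estimates on a fundamental domain and transport them by $\Z^{d}$-equivariance. The higher $\ell$ would be handled by commuting horizontal derivatives through $e^{-\Delta/2}$ using the Hörmander estimates.
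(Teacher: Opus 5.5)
Your proposal is correct. The reduction $H_{P}^{\HH}=H_{M}(t,p(\cdot),p(\cdot))+H^{\perp}$, followed by Theorem \ref{theorem:main1-global}, is exactly what the paper does in \eqref{eq:decomposition-global}. You differ only in how you bound the $G$-nontrivial term (the paper's $E_{P}$) in $C^{\ell_u,\ell_v}$ uniformly in $\n,\mathbf{m}$.

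The paper stays on the compact bundle $P_{0}$, fiber by fiber in $\btheta$:
\begin{enumerate}
\item For each $\btheta$, Lemma \ref{lemma:nontrivial-G-local} bounds the kernel of $e^{-t\Delta_{\btheta}^{\HH}}(\mathrm{Id}-\mathsf{P}_{G})$ by $Ce^{-ct}$ in $C^{\ell_u,\ell_v}(P_{0}\times P_{0})$. This uses the $\btheta$-uniform spectral gap and subelliptic estimates that are uniform in $\btheta$ because $\Delta_{\btheta}^{\HH}-\Delta_{\mathbf{0}}^{\HH}$ is of lower order.
\item Lemma \ref{lemma:nontrivial-G-global} then rebuilds $E_{P}(t,\tau_{\n}u_0,\tau_{\mathbf{m}}v_0)$ as a Floquet integral over trivializing patches. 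There $\n-\mathbf{m}$ enters only through the unimodular factor $e^{i(\n-\mathbf{m})\cdot\btheta}$, which gives uniformity in $\n,\mathbf{m}$.
\end{enumerate}

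You instead work directly on the noncompact $P$:
\begin{enumerate}
\item The fiberwise gaps and the Parseval identity of Proposition \ref{proposition:borel-floquet-weil} give one $L^{2}(P)$ bound for $e^{-t\Delta_{P}^{\HH}}(1-\Pi_{\mathbf{0}})$.
\item The $L^{2}\to C^{\ell}$ smoothing of $e^{-\Delta_{P}^{\HH}/2}$ then needs only local subelliptic estimates for a single operator. Uniformity over the translates $\tau_{\n}\overline{D_{P}}$ comes for free from deck invariance; equivalently, $\sup_{u}\|\partial_{u}^{\alpha}H_{P}^{\HH}(1/2,u,\cdot)\|_{L^{2}(P)}$ over $u$ in a translate of $\overline{D_P}$ does not depend on the translate.
\end{enumerate}
Your route avoids justifying subelliptic constants uniform in $\btheta$, as well as the partition of unity and trivializations for this term. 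Its price is the functional calculus for $\Delta_{P}^{\HH}$ on the complete cover (essential self-adjointness and compatibility with the Floquet decomposition), which is standard here. The paper's route has the advantage of reusing one lemma for both Theorem \ref{theorem:main2-local} and the global statement.

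Two small corrections:
\begin{itemize}
\item The identity $\Delta_{P}^{\HH}(f\circ p)=(\Delta_M f)\circ p$ does not come from Lemma \ref{lemma:identity}, which is the curvature commutator identity used for the spectral gap. It follows directly from the definition of $d_{\HH_P}$, the isometry $dp|_{\HH_P}$, and the normalized Haar measure.
\item For higher derivatives, rely on iterated local subelliptic estimates of the form $\|\chi u\|_{H^{k\varepsilon}}\lesssim\sum_{j\leq k}\|(\Delta_{P}^{\HH})^{j}u\|_{L^{2}(P)}$, whose right-hand side is controlled for $u=e^{-\Delta_{P}^{\HH}/2}f$ by the spectral theorem. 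Do not try to commute horizontal derivatives through $e^{-\Delta/2}$: horizontal vector fields do not commute with $\Delta_{P}^{\HH}$.
\end{itemize}
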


Thus, the horizontal Brownian motion has, to every polynomial order in the large-time expansion, the same diffusive behavior in the Abelian covering direction as its projection to $M$. Dependence on the $G$-fiber variables is carried by nontrivial representation modes and is exponentially small.

Finally, let us comment that although at first sight, it may seem more natural to study the Laplace--Beltrami operator on the total space $P$, we decided to work instead with the horizontal Laplacian. We have two main reasons. First, $\Delta_{P}^{\mathbb{H}}$ is naturally associated with the geometry of the connection, and is the relevant diffusion in the sub-Riemannian setting. Second, this operator already captures the long-time behavior of the full heat semigroup. Indeed, after endowing $P$ with a connection metric (introduced in \S\ref{subsection:horizontal-laplacian}), the operator $\Delta_{P}$ decomposes as the sum of the horizontal Laplacian and the Laplacian along the compact $G$-fibers. Since these operators commute, the heat semigroup factorizes accordingly. The fiberwise Laplacian has a discrete spectrum, whose kernel consists precisely of the functions constant along the fibers, while every higher fiber mode decays exponentially fast because of the spectral gap. Consequently, the large-time asymptotics are entirely determined by the horizontal heat semigroup, and our results immediately imply the corresponding expansions for the Laplace--Beltrami heat kernel.

\begin{corollary}
\label{corollary:laplace-expansion}
Let $\Delta_{P}$ denote the Laplace--Beltrami operator associated with a connection metric on $P$, and let $H_{P}(t,u,v)$ be its heat kernel. Under the assumptions of Theorems \ref{theorem:main2}, \ref{theorem:main2-local}, and \ref{theorem:main2-global}, the corresponding conclusions remain valid after replacing $H_{P}^{\mathbb H}(t,u,v)$ by $H_{P}(t,u,v)$.
\end{corollary}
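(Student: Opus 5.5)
The plan is to split $\Delta_P$ into its horizontal part and its fiber part, show that the two pieces commute so the heat semigroup factorizes, and then observe that every nontrivial fiber mode is exponentially small. The three statements for $H_P$ then follow from Theorems \ref{theorem:main2}, \ref{theorem:main2-local} and \ref{theorem:main2-global}.

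\emph{Step 1: splitting the Laplacian.} For a connection metric, $TP=\HH_P\oplus \mathbb{V}_P$ orthogonally, with the vertical metric induced by a bi-invariant metric on $\fg$. The Riemannian volume is then a constant multiple of $\nu$, and a computation in a local orthonormal frame adapted to the splitting gives
\[
\Delta_P=\Delta_P^{\HH}+\Delta_{\mathbb V}.
\]
Here $\Delta_{\mathbb V}=-\sum_i X_i^{2}$, where the $X_i$ are the fundamental vector fields of an orthonormal basis of $\fg$, so $\Delta_{\mathbb V}$ is the Casimir of the right $G$-action. One should check that no mixed first-order terms appear. They cancel because $\mathrm{div}\,X_i=0$ (the Haar measure is invariant) and because the horizontal lift commutes with the right action. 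Since the right $G$-action preserves $\HH_P$, the metric and $\nu$, the operator $\Delta_P^{\HH}$ commutes with $R_g^*$ and hence with the Casimir $\Delta_{\mathbb V}$.

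\emph{Step 2: factorizing the semigroup.} Use the Peter--Weyl/Borel--Weil decomposition $L^2(P)=\bigoplus_{\bk}L^2(P)_{\bk}$ of \S\ref{ssection:bw-calculus}. On the $\bk$-isotypic component, $\Delta_{\mathbb V}$ acts as the scalar $\lambda_{\bk}$, which is the Casimir eigenvalue. We have $\lambda_{\mathbf 0}=0$ and $\lambda_{\bk}\ge \lambda_*>0$ for $\bk\ne\mathbf 0$. Both operators are essentially self-adjoint and commuting (for instance on $C^\infty_{\comp}$ via the joint decomposition), so
\[
e^{-t\Delta_P}=e^{-t\Delta_P^{\HH}}e^{-t\Delta_{\mathbb V}}=\Pi_{\mathbf 0}e^{-t\Delta_P^{\HH}}+\sum_{\bk\ne\mathbf 0}e^{-t\lambda_{\bk}}\,\Pi_{\bk}e^{-t\Delta_P^{\HH}}.
\]
On the $\bk=\mathbf 0$ component both semigroups coincide. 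The remaining part has $L^2\to L^2$ norm at most $e^{-\lambda_* t}$.

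\emph{Step 3: transferring the conclusions.} For Theorem \ref{theorem:main2}, the $\bk=\mathbf 0$ part of $\langle H_P(t)f,g\rangle$ equals that of $H_P^{\HH}(t)$. In the paper's proof, only this mode carries the expansion terms $\mathcal A_j$. The rest is bounded by $e^{-ct}\|f\|_{L^2}\|g\|_{L^2}$ and is absorbed into $\mathcal R_N$. Equivalently, $H_P=H_P^{\HH}+(H_P-H_P^{\HH})$, where the difference is $\sum_{\bk\ne \mathbf 0}(e^{-t\lambda_{\bk}}-1)\Pi_{\bk}e^{-t\Delta^{\HH}_P}$. The simplest route is to note that the proof of Theorem \ref{theorem:main2} already shows the nonzero modes of $e^{-t\Delta_P^{\HH}}$ are $O(e^{-ct})$ (uniform spectral gap). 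Multiplying those modes by $e^{-t\lambda_{\bk}}\le 1$ preserves this bound.

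\emph{Step 4: the pointwise statements (main obstacle).} For Theorems \ref{theorem:main2-local} and \ref{theorem:main2-global}, exponential decay of the nonzero modes must be upgraded from $L^2$ to $C^{\ell_u,\ell_v}$ on compact sets, uniformly in $\n-\mathbf m$. I would use the semigroup property, writing
\[
H_P(t)=e^{-\Delta_P}\circ\bigl(e^{-(t-2)\Delta_P}\Pi_{\ne\mathbf 0}\bigr)\circ e^{-\Delta_P}+\Pi_{\mathbf 0}\text{-part}.
\]
The outer factors are smoothing and map $L^2\to C^\ell$ with bounds uniform over the cover. This follows from the ellipticity of $\Delta_P$ and the $\Z^d$-invariance of the geometry, via local Sobolev embedding together with the bounded geometry of the cover, so translates by $\tau_{\n}$ cost nothing. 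The middle factor has norm $O(e^{-ct})$. This gives a $C^{\ell}$ remainder of order $e^{-ct}$ uniformly in $\n,\mathbf m$. The $\Pi_{\mathbf 0}$-part equals the $\Pi_{\mathbf 0}$-part of $H_P^{\HH}$, which is exactly the kernel $H_M(t,p(u),p(v))$. Theorems \ref{theorem:main2-local} and \ref{theorem:main2-global} then give the claimed expansions, with the same coefficients and with remainders enlarged only by $O(e^{-ct})$.
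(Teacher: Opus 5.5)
Your proposal is correct and follows the paper's argument. You split $\Delta_P=\Delta_P^{\HH}+\Delta_P^{\V}$ for the connection metric and use commutation to factorize the heat semigroup. Then the $\bk=\mathbf{0}$ mode reproduces the horizontal (hence Abelian-cover) semigroup, while every nontrivial fiber mode is damped by $e^{-\lambda_* t}$.

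Your Step 4 sandwiches the nontrivial-mode part between two copies of $e^{-\Delta_P}$, and uses ellipticity together with the bounded geometry of the cover to get $C^{\ell_u,\ell_v}$ bounds uniform in $\n,\mathbf m$. This makes explicit the pointwise upgrade that the paper only asserts when it says the corollary "hence follows."
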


\subsection{Related results}

Let $M$ be a complete Riemannian manifold endowed with a free cocompact action of $\mathbb{Z}^{d}$. In \cite{Lott-99}, Lott studied the long-time behavior of the heat kernel in the diffusive regime, in which the distance between the two endpoints is comparable with $\sqrt{t}$, similarly as in Theorem \ref{theorem:main1-global}. He showed that, after rescaling, the heat propagation is governed by a Euclidean metric on $\R^{d}$, defined by the Hodge inner product on the space of cohomology classes associated with the covering. Subsequently, Kotani and Sunada established a local central limit theorem and an asymptotic power-series expansion for the heat kernel on a manifold with a cocompact Abelian group action \cite[Theorems 3 \& 4]{Kotani-Sunada-00}. Thus, on the underlying Abelian cover, Theorems \ref{theorem:main1-local} and \ref{theorem:main1-global} should be viewed as belonging to the line of work initiated by \cite{Lott-99,Kotani-Sunada-00}. Furthermore, our approach is based on similar ideas, relying on a Floquet decomposition and an analysis of a twisted Laplacian.

Relative to Lott’s work, our results provide a substantially finer asymptotic description. While \cite{Lott-99} identifies the leading diffusive profile and the effective Euclidean geometry governing heat propagation, our theorems yield asymptotic expansions to arbitrary order together with quantitative estimates for the differentiated remainders.

The comparison with Kotani–Sunada is more subtle. Their work concerns the same diffusive regime and already establishes a local central limit theorem together with a full asymptotic power-series expansion of the heat kernel. Our contribution therefore lies not in the existence of such an expansion, but rather in its functional-analytic formulation. More precisely, our expansions are accompanied by remainder estimates that are uniform after differentiation in both kernel variables, to arbitrarily prescribed finite order, throughout the diffusive region. Moreover, these pointwise expansions are complemented by the distributional expansion of Theorem \ref{theorem:main1}, whose coefficients define continuous operators on $C^{\infty}_{\comp}(M)$ and whose remainder is controlled in the weighted spaces $B^{s,r}(M)$. 

For manifolds with boundary, Geng and Iyer \cite{Geng-Iyer-21} obtained exact long-time asymptotics for heat kernels on Abelian covers of compact manifolds with smooth boundary, allowing mixed Dirichlet and Neumann boundary conditions.

Related asymptotic questions for periodic elliptic operators have also been considered at the level of Green functions. In particular, Kha \cite{Kha-18} obtained precise spatial asymptotics for Green functions of generic periodic second order elliptic operators on Abelian covers, both inside spectral gaps and at their edges. The common analytic mechanism is the reduction, by Floquet theory, to the perturbation of a distinguished band function near a nondegenerate spectral minimum. In the present work, the corresponding band is the lowest eigenvalue $\lambda_{0}(\btheta)$ of the twisted Laplacian, and its Hessian at $\btheta=\mathbf{0}$ determines the Gaussian profile in
Theorem \ref{theorem:main1-global}.

To the best of our knowledge, analogous long-time expansions for horizontal heat kernels on Abelian covers of compact principal bundles, uniform simultaneously in the Abelian and (compact) group variables, do not appear in the previous literature. In particular, the reduction of the leading asymptotics to the fiberwise constant mode, and the resulting identification of all the polynomial coefficients with those of the underlying Abelian cover, seem to be specific to the present setting. Our analysis relies on the recently developed Borel--Weil calculus introduced by Ceki\'c and Lefeuvre \cite{Cekic-Lefeuvre-24}. This calculus provides a semiclassical description of $G$-equivariant operators on a compact principal $G$-bundle, where the semiclassical parameters are the highest weights $\bk$ parametrizing the irreducible representations of $G$. Applied to a horizontal Laplacian, it gives a family of twisted operators $\Delta_{\bk}$. Under global nondegeneracy of the curvature, one has a lower bound of the form $\lambda_{0} (\Delta_{\bk}) \geq c|\bk|$ for  $|\bk|\gg 1$, whereas dense holonomy gives control on whether the eigenvalue is zero. We generalize these results to our twisted case to obtain a spectral gap for the decomposed operator $\Delta_{\btheta,\bk}$. This allows us to show that every nontrivial $G$-type is negligible in the large-time limit, and conclude in virtue of the results in the purely Abelian case. Hence, the arguments of the present paper combine this Borel--Weil decomposition with the Floquet decomposition in the Abelian cover. In this sense, the proof involves two dual parameters, namely, $\btheta\in \mathrm{U}(1)^{d}$, associated with the Abelian covering, and $\bk$, associated with the irreducible representations of $G$.

In the circle-bundle case $G=\mathrm{U}(1)$, the operators $\Delta_{\bk}$ are magnetic Laplacians. Polynomial growth of their lowest eigenvalue can persist even when the curvature is degenerate. For example, Helffer and Kordyukov \cite{Helffer-Kordyukov-09} considered magnetic fields which vanish regularly to order $r\geq 0$ along a hypersurface and proved that $\lambda_{1}(\Delta_{\bk}) \gtrsim |\bk|^{\frac{2}{r+2}}$. The globally nondegenerate case corresponds to $r=0$ and gives the linear lower bound. This suggests that some of our conclusions may continue to hold under suitable finite-order degeneracy assumptions. We discuss this particular case in \S\ref{subsection:G=U(1)}.

Finally, let us mention that the present work is also motivated by the recent dynamical results by the author in collaboration with Ceki\'c and Lefeuvre \cite{Cekic-Lefeuvre-Munoz-Thon-26}, who established asymptotic expansions for correlations of isometric extensions of volume-preserving Anosov flows on Abelian covers. There, we also combine Floquet theory in the Abelian variable with the Borel--Weil calculus in the compact-group variable. In contrast with this dynamical work, the heat semigroup considered here is smoothing and self-adjoint. This makes it possible not only to obtain weak asymptotics of correlations, as in Theorems~\ref{theorem:main1} and~\ref{theorem:main2}, but also to derive the local and diffusive-scale pointwise expansions of the heat kernels themselves.

\subsection{Organization of the article} We begin with \S\ref{section:abelian-covers}, where we briefly review some constructions on Abelian covers from \cite{Cekic-Lefeuvre-Munoz-Thon-26}, and certain facts about Borel--Weil calculus developed in \cite{Cekic-Lefeuvre-24}. In Section \ref{section:abelian-extension-laplacian} we study the Laplacian on Abelian extensions and prove the results from \S\ref{subsection:results-abelian}. Then, in Section \ref{section:decay-isometric} we prove the asymptotic expansions for the horizontal Laplacian, i.e., the results from \S\ref{subsection:results-isometric}. To this end, we prove a spectral gap for the induced operator in the Borel--Weil calculus in \S\ref{subsection:spectral-gap}. We also discuss the case $G=\mathrm{U}(1)$ in \S\ref{subsection:G=U(1)}.

\subsection{Acknowledgments.} I would like to thank Thibault Lefeuvre and Rento Velozo Ruiz for discussions related to the project and for comments on an earlier draft of the paper.

The author was supported by the European Research Council (ERC) under the European Union’s Horizon 2020 research and innovation programme (Grant agreement no. 101162990 -- ADG).

\section{Abelian covers and isometric extensions}

In this section we review several preliminaries needed for our analysis. We begin with the definitions about Abelian covers in \S\ref{ssection:abelian-covers}. We then discuss well-known facts about Floquet theory in \S\ref{ssection:floquet-theory}. In \S\ref{ssection:functional-spaces} we introduce functional spaces that will be useful to our asymptotic analysis, while in \S\ref{ssection:bw-calculus} we briefly review the Borel--Weil theory. Finally, in \S\ref{subsection:curvatures} we review the definitions of curvature, holonomy, and certain differential operators on the Borel--Weil calculus of particular importance for this work.

\label{section:abelian-covers}

\subsection{Abelian covers}

\label{ssection:abelian-covers}

In this subsection we briefly give definitions and facts about Abelian covers and how we will trivialize them.

Let $M_0$ be a smooth closed manifold, $x_0 \in M_0$ be an arbitrary fixed point, and let
\begin{equation}
    \label{equation:representation-rho}
    \rho \colon \pi_1(M_0,x_0) \to \Z^d
\end{equation}
be a surjective representation onto $\Z^d$ for some $d \geq 1$. Let
\[
M := \widetilde{M}_0 \times_{\rho} \Z^d
\]
be the associated Abelian cover, where $\widetilde{M}_0$ denotes the universal cover of $M_0$. That is $M = (\widetilde{M}_0 \times \Z^d)/\sim$ where

\[
    (x,\mathbf{n}) \sim (\gamma.x, - \rho(\gamma)+\mathbf{n}),\quad \forall x \in \widetilde{M}_0, \n \in \Z^d, \gamma \in \pi_1(M_0, x_0).
\]
Note that $M$ is connected as $\rho$ is surjective, that it is a principal $\Z^d$-bundle over $M_0$, namely there is a surjective projection $\pi \colon M \to M_0$ such that the preimage of each point can be identified with $\Z^d$, and it is equipped with a $\Z^d$-action $\tau \colon M \times \Z^d \to M$. On the universal cover, this action is given by 
\[
    \tau_\kk(x,\n) = (x,\n + \kk),\quad \kk, \n \in \mathbb{Z}^d, x \in \widetilde{M}_0,
\]
and it is immediate to verify that it descends to the quotient $M$. Finally, we can lift the Riemannian metric $g_{0}$ to a Riemannian metric $g$ in $M$ in a unique way so that $\pi \colon M \to M_{0}$ is a local isometry.

Let $\mathrm{U}(1) := \R/2\pi\Z$. Given $\btheta \in \mathrm{U}(1)^d$, we introduce the representation
\[
    \alpha_{\btheta} \colon \Z^d \to \mathrm{U}(1), \qquad \alpha_{\btheta}(\n) :=  \btheta \cdot \n ~ \text{ mod } 2\pi.
\]
We will write $H^1_{\mathrm{dR}}(M_0, \mathbb{R})$ for the first de Rham cohomology group of $M_0$, and denote by $H^1_{\mathrm{dR}}(M_0, 2\pi \mathbb{Z})$ the lattice inside of it, whose elements $[\eta]$ satisfy for any closed loops $\gamma \subset M_0$
\[
    \int_\gamma [\eta] \in 2\pi \mathbb{Z}.
\]

The following is taken from  \cite[\S2.1]{Cekic-Lefeuvre-Munoz-Thon-26}

\begin{proposition}\label{prop:eta-theta}
\begin{enumerate} [label=(\roman*), itemsep=5pt]
    \item There exists a smooth map
    \[
    \mathrm{U}(1)^d \ni \btheta \mapsto [\eta_{\btheta}] \in H^1_{\mathrm{dR}}(M_0, \mathbb{R})/H^1_{\mathrm{dR}}(M_0, 2\pi \mathbb{Z})
    \]
    such that for all smooth closed curves $\gamma \subset M_0$:
    \begin{equation}
        \label{equation:eta-theta}
            \int_\gamma [\eta_{\btheta}] = \rho(\gamma) \cdot \btheta ~ \mod 2\pi.
    \end{equation}
    Moreover, the dependence on $\btheta$ is linear in the sense that the map $\btheta \mapsto [\eta_{\btheta}]$ descends from a linear map $\mathbb{R}^d \ni \btheta \mapsto [\widetilde{\eta}_{\btheta}] \in H^1_{\mathrm{dR}}(M_0, \mathbb{R})$, and so is in particular smooth. As a consequence, for any $v \in T_{\btheta} \mathrm{U}(1)^d \simeq \mathbb{R}^d$, we have $D[\eta_{\btheta}](v) = [\widetilde{\eta}_{v}]$. \item Introduce
    \begin{equation}
    \label{equation:stheta}
        s_{\btheta}(x) := \exp\left(i \int_\gamma \eta_{\btheta} \right), \qquad x \in M,
    \end{equation}
    where $\gamma := \pi(\gamma')$ and $\gamma'$ is an arbitrary path joining $\widehat{x}_0$ to $x$ in $M$. Then $s_{\btheta}$ is well-defined, and  
    \[
        s_{\btheta} \circ \tau_\n = e^{i \n \cdot \btheta} s_{\btheta},\quad s_{\btheta}^{-1} d s_{\btheta} = i \pi^* \eta_{\btheta},\quad \forall \mathbf{n} \in \mathbb{Z}^d.
    \]
    \item Let $U \subset \mathrm{U}(1)^d$ be an open contractible set. Then there exists a smooth map
    \[
        U \ni \btheta \mapsto \eta_{\btheta} \in C^\infty(M_0, T^*M_0) \cap \ker d, 
    \]
    whose de Rham cohomology class agrees with the one of $[\widetilde{\eta}_{\btheta}]$ constructed in (i).  Moreover, if $\mathbf{0} \in U$, we may assume that $\eta_{\mathbf{0}} \equiv 0$, and we can assume that $(\eta_{\btheta})_{\btheta \in U}$ are harmonic with respect to a background Riemannian metric $g_0$ on $M_0$. Consequently, the conclusions of (2) are valid with smooth dependence on $\btheta \in U$.
\end{enumerate}
    
\end{proposition}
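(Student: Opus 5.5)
We prove the three items in order. For (i), we use that $\rho$ factors through the abelianization $H_1(M_0,\Z)$ and hence through its torsion-free quotient $H_1(M_0,\Z)/\mathrm{tors}$, since $\Z^d$ is torsion-free. For each standard basis vector $e_j$ of $\R^d$, the functional $\gamma \mapsto \rho(\gamma)\cdot e_j$ is a homomorphism $H_1(M_0,\Z)\to \Z\subset\R$. By de Rham's theorem, i.e. the identification $H^1_{\mathrm{dR}}(M_0,\R)\cong \mathrm{Hom}(H_1(M_0,\Z),\R)$ given by integration, there is a unique class $[\widetilde\eta_{e_j}]$ with $\int_\gamma \widetilde\eta_{e_j} = \rho(\gamma)\cdot e_j$. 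We then set $[\widetilde\eta_{\btheta}] := \sum_j \theta_j [\widetilde\eta_{e_j}]$, which is linear in $\btheta\in\R^d$ and satisfies $\int_\gamma \widetilde\eta_{\btheta}=\rho(\gamma)\cdot\btheta$. Replacing $\btheta$ by $\btheta+2\pi\mathbf{m}$ with $\mathbf{m}\in\Z^d$ changes this class by $2\pi\sum_j m_j[\widetilde\eta_{e_j}]$, whose periods lie in $2\pi\Z$. Hence the map descends to $\mathrm{U}(1)^d \to H^1_{\mathrm{dR}}(M_0,\R)/H^1_{\mathrm{dR}}(M_0,2\pi\Z)$, and \eqref{equation:eta-theta} holds. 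The derivative statement $D[\eta_{\btheta}](v)=[\widetilde\eta_v]$ is immediate from linearity.

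For (ii), fix a closed representative $\eta_{\btheta}$ of the class from (i), and fix a base point $\widehat x_0 \in \pi^{-1}(x_0)$. To show $s_{\btheta}$ is well defined, take two paths $\gamma_1',\gamma_2'$ in $M$ from $\widehat x_0$ to $x$. Then $\gamma_1'\cdot(\gamma_2')^{-1}$ is a closed loop in $M$, and its projection $c$ lies in $\ker\rho$; this is exactly why such a loop lifts to a closed loop in $M=\widetilde M_0\times_\rho\Z^d$. By \eqref{equation:eta-theta}, $\int_c\eta_{\btheta}\in 2\pi\Z$, so the exponential does not depend on the path. For the equivariance, observe that a path from $\widehat x_0$ to $\tau_\n x$ can be obtained by concatenating a path from $\widehat x_0$ to $\tau_\n\widehat x_0$ with the $\tau_\n$-translate of a path from $\widehat x_0$ to $x$. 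The first piece projects to a loop $c$ with $\rho(c)=\n$, so it contributes $e^{i\n\cdot\btheta}$; the second piece projects to the same curve as the original path, so it contributes $s_{\btheta}(x)$. The sign of $\n$ has to be checked against the convention $(x,\n)\sim(\gamma.x,-\rho(\gamma)+\n)$, and this sign bookkeeping is the only delicate point of (ii). Finally, the formula $s_{\btheta}^{-1}ds_{\btheta}=i\pi^*\eta_{\btheta}$ follows by differentiating the path integral at its endpoint, using that $\pi$ is a local diffeomorphism.

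For (iii), we use Hodge theory for $g_0$: every de Rham class has a unique harmonic representative, and the map $\mathcal H\colon H^1_{\mathrm{dR}}(M_0,\R)\to C^\infty(M_0,T^*M_0)$ sending a class to it is linear. On a contractible open set $U\subset \mathrm{U}(1)^d$, the projection $\R^d\to\mathrm{U}(1)^d$ admits a smooth section $\sigma\colon U\to\R^d$, and we define $\eta_{\btheta}:=\mathcal H[\widetilde\eta_{\sigma(\btheta)}]$. This is smooth in $\btheta$ because it is the composition of a smooth map with linear maps. When $\mathbf 0\in U$ we choose $\sigma(\mathbf 0)=\mathbf 0$, which gives $\eta_{\mathbf 0}=0$. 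The conclusions of (ii) then hold with smooth dependence on $\btheta\in U$, since $s_{\btheta}(x)$ is the exponential of an integral that depends linearly and smoothly on $\btheta$. We expect the main obstacle to be the well-definedness and sign conventions in (ii). The rest reduces to the de Rham and Hodge theorems together with the universal coefficient observation that $\rho$ kills torsion.
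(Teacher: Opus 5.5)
Your argument is correct. The paper does not prove this proposition itself; it imports it from \cite[\S2.1]{Cekic-Lefeuvre-Munoz-Thon-26}. Your route is the standard one: de Rham duality $H^1_{\mathrm{dR}}(M_0,\R)\cong\mathrm{Hom}(H_1(M_0,\Z),\R)$ for (i), path-independence modulo $2\pi$ via $\pi_*\pi_1(M,\widehat{x}_0)=\ker\rho$ for (ii), and the linear harmonic projection composed with a smooth local lift $\sigma\colon U\to\R^d$ with $\sigma(\mathbf{0})=\mathbf{0}$ for (iii).

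The sign you left open comes out as claimed. Take $\widehat{x}_0=[(\tilde{x}_0,\mathbf{0})]$, and use the usual identification of $\gamma=[c]\in\pi_1(M_0,x_0)$ with the deck transformation sending $\tilde{x}_0$ to the endpoint of the lift $\tilde{c}$ of $c$. Then the lift of $c$ through $\widehat{x}_0$ is $t\mapsto[(\tilde{c}(t),\mathbf{0})]$. By the relation $(x,\n)\sim(\gamma.x,-\rho(\gamma)+\n)$, it ends at $[(\gamma.\tilde{x}_0,\mathbf{0})]=[(\tilde{x}_0,\rho(\gamma))]=\tau_{\rho(\gamma)}\widehat{x}_0$.

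Since $\tau$ acts freely, this one computation gives two things. It gives $\pi_*\pi_1(M,\widehat{x}_0)=\ker\rho$. It also gives $\rho(\pi\circ\beta)=+\n$ for any path $\beta$ from $\widehat{x}_0$ to $\tau_{\n}\widehat{x}_0$, hence $s_{\btheta}\circ\tau_{\n}=e^{i\n\cdot\btheta}s_{\btheta}$.

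The detour through the torsion-free quotient in (i) is harmless but unnecessary, since any homomorphism into $\R$ already kills torsion.
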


\subsection{Floquet theory} \label{ssection:floquet-theory} Here we present basic concepts of Floquet theory. In particular, we review the decomposition of functions into its Fourier modes.

For $\btheta \in \mathrm{U}(1)^d$ we introduce the associated line bundles
\[
    L_{\btheta} := \widetilde{M}_0 \times_{e^{-i \alpha_{\btheta} \circ \rho}} \mathbb{C} = M \times_{e^{-i\alpha_{\btheta}}} \mathbb{C},
\]
where as usual we have 
\[
    M \times_{e^{-i\alpha_{\btheta}}} \C = M \times \C/\sim,\quad (x, z) \sim (\tau_{\n} x, e^{i \btheta \cdot \n}z), \forall \n \in \Z^d.
\]
Observe that $L_{\btheta}$ is a flat Hermitian (i.e. equipped with an inner product in its fibers and a compatible flat connection) line bundle with holonomy along a curve $\gamma \in \pi_1(M_0, x_0)$ given by
\[
\exp\left(-i \alpha_{\btheta} \circ \rho(\gamma)\right) = \exp\left(-i \btheta \cdot \rho(\gamma)\right).
\]
From the definition of $L_{\btheta}$, we see that the sections in $C^\infty(M_0, L_{\btheta})$ correspond bijectively to equivariant functions in
\begin{equation}
    \label{equation:equivariant-space}
    C^\infty_{\btheta}(M) := \{F \in C^\infty(M) \mid \forall \mathbf{n} \in \mathbb{Z}^d,\, F \circ \tau_\n = e^{i \btheta \cdot \n} F\},
\end{equation}
the space of Fourier modes of frequency $\btheta$. Given a section $s \in C^\infty(M_0, L_{\btheta})$ we will denote its equivariant lift by $\widetilde{s} \in C^\infty_{\btheta}(M)$.

We now observe that the line bundles $L_{\btheta}$ are topologically trivial.

\begin{proposition}
\label{proposition:ltheta-trivial}
    Let $U \subset \mathrm{U}(1)^d$ be an open contractible set. Then, the family $(L_{\btheta})_{\btheta \in U}$ of line bundles is smoothly trivial, i.e. there exists a smooth family $(s_{\btheta})_{\btheta \in U}$ such that $s_{\btheta} \in C^\infty(M_0, L_{\btheta})$ is nowhere vanishing. 
\end{proposition}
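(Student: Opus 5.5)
The plan is to build the trivializing sections directly from the functions $s_{\btheta}$ of Proposition \ref{prop:eta-theta}(ii)--(iii), which already carry the correct $\Z^d$-equivariance. Fix an open contractible set $U \subset \mathrm{U}(1)^d$. By Proposition \ref{prop:eta-theta}(iii) there is a smooth family of closed (harmonic) $1$-forms $(\eta_{\btheta})_{\btheta \in U}$ on $M_0$ whose cohomology classes satisfy \eqref{equation:eta-theta}. Fix a base point $\widehat{x}_0 \in M$ over $x_0$ and define
\[
\widetilde{s}_{\btheta}(x) := \exp\left(i \int_{\pi(\gamma')} \eta_{\btheta}\right), \qquad x \in M,
\]
where $\gamma'$ is any path in $M$ from $\widehat{x}_0$ to $x$.

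The proof then has three steps, carried out in order.

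\textbf{Step 1: well-definedness.} Two paths $\gamma'_1,\gamma'_2$ from $\widehat{x}_0$ to $x$ differ by a loop in $M$. Its projection $\gamma$ lies in the image of $\pi_1(M)$, that is, $\rho(\gamma) = 0$. By \eqref{equation:eta-theta} we get $\int_\gamma \eta_{\btheta} \in 2\pi\Z$, so the exponential is unchanged. Note that only this step uses the integrality relation.

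\textbf{Step 2: equivariance.} To evaluate at $\tau_\n x$, concatenate a path from $\widehat{x}_0$ to $x$ with the $\tau_\n$-translate of a path from $\widehat{x}_0$ to $\tau_{\n}\widehat{x}_0$. This reduces to a path from $\widehat{x}_0$ to $\tau_\n \widehat{x}_0$, whose projection is a loop $\gamma$ with $\rho(\gamma) = \n$ (up to the sign convention in the definition of $M$). Formula \eqref{equation:eta-theta} then gives
\[
\widetilde{s}_{\btheta} \circ \tau_\n = e^{i \n\cdot\btheta}\, \widetilde{s}_{\btheta},
\]
so $\widetilde{s}_{\btheta} \in C^\infty_{\btheta}(M)$. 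This is exactly item (ii) of Proposition \ref{prop:eta-theta}, which can simply be quoted. If the sign convention produces $e^{-i\n\cdot\btheta}$ instead, replace $\eta_{\btheta}$ by $-\eta_{\btheta}$, or equivalently conjugate.

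\textbf{Step 3: conclusion.} By the bijection $C^\infty(M_0,L_{\btheta}) \simeq C^\infty_{\btheta}(M)$ in \eqref{equation:equivariant-space}, $\widetilde{s}_{\btheta}$ descends to a section $s_{\btheta}$ of $L_{\btheta}$. It is nowhere vanishing because $|\widetilde{s}_{\btheta}| = 1$. Smoothness in $(\btheta,x)$ follows from smooth dependence of $\eta_{\btheta}$ on $\btheta$: locally in $x$ one may fix the paths, and the integral then depends smoothly on both variables. Multiplication by $s_{\btheta}$ therefore gives a smooth family of isomorphisms $M_0 \times \C \to L_{\btheta}$ over $U$.

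The main obstacle is the well-definedness and equivariance bookkeeping, specifically getting the sign convention between $L_{\btheta}$, $\tau$ and $\rho$ consistent. This is routine once Proposition \ref{prop:eta-theta}(ii) is invoked. The contractibility of $U$ enters only through the existence of a smooth, genuinely form-valued choice $\eta_{\btheta}$ from part (iii), as opposed to a class modulo the lattice $H^1_{\mathrm{dR}}(M_0,2\pi\Z)$.
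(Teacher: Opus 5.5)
Your proposal is correct and follows the paper's route: the trivializing sections are exactly the unit-modulus $\btheta$-equivariant functions $s_{\btheta}=\exp\bigl(i\int\eta_{\btheta}\bigr)$ of Proposition~\ref{prop:eta-theta}(ii)--(iii), and contractibility of $U$ is used only to obtain a smooth form-valued family $(\eta_{\btheta})_{\btheta\in U}$. One small slip: in Step 2 the path to $\tau_{\n}x$ should be a path from $\widehat{x}_0$ to $\tau_{\n}\widehat{x}_0$ followed by the $\tau_{\n}$-translate of a path from $\widehat{x}_0$ to $x$, and that step also uses \eqref{equation:eta-theta}, so Step 1 is not the only place it enters.
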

    By a smooth family of sections $(s_{\btheta})_{\btheta \in U}$ here we mean that the family of equivariant lifts $\widetilde{s}_{\btheta} \in C^\infty_{\btheta}(M) \subset C^\infty(M)$ depends smoothly on $\btheta$. Recall however, that the family of line bundles $\{L_{\btheta} \mid \btheta \in \mathrm{U}(1)^d\}$ is not trivial as a \emph{family}, i.e. we cannot choose a smooth family $\{s_{\btheta} \mid \btheta \in \mathrm{U}(1)^d\}$ of trivialising sections, see \cite[Remark~2.7]{Cekic-Lefeuvre-Munoz-Thon-26}.

After choosing a representative $\eta_{\btheta}$ of $[\eta_{\btheta}]$ as in Proposition \ref{prop:eta-theta}, there exists an equivariant function $s_{\btheta} \in C_{\btheta}^\infty(M)$ of pointwise unit norm, and an isomorphism
\[
C^\infty(M_0) \to C^\infty_{\btheta}(M), \qquad f \mapsto F_{\btheta} := (\pi^*f) s_{\btheta}.
\]
Conversely, the inverse map is denoted by
\[
C^\infty_{\btheta}(M) \to C^\infty(M_0), \qquad F_{\btheta} \mapsto f_{\btheta},
\]
where $f_{\btheta} \in C^\infty(M_0)$ is the unique function such that $F_{\btheta}/s_{\btheta} = \pi^*f_{\btheta}$. We note that since $s_{\btheta}$ has pointwise unit norm, it is uniquely determined up to multiplication by a function of the form $\pi^*h$, where $h \in C^\infty(M_0)$ has pointwise unit norm; therefore the identification above is also unique up to multiplication by $\pi^*h$. Note also that $C^\infty_{\mathbf{0}}(M)$ corresponds to $\Z^d$-periodic functions, so they are pullbacks of functions on $M_0$, that is $C^\infty_{\mathbf{0}}(M) = \pi^* C^\infty(M_0)$. Furthermore, we can do this procedure \emph{smoothly} for $\btheta$ varying in a contractible open set $U \subset \mathrm{U}(1)^d$.

We introduce the following $L^2$-norm on $C^\infty_{\btheta}(M)$ which coincides with the $L^2$-norm on sections of $L_{\btheta}$ under the identification above:
\[
    \|F_{\btheta}\|^2_{L^2_{\btheta}(M)} := \|F_{\btheta}\|_{L^2(M_{0}, L_{\btheta})}^2 = \|f_{\btheta}\|^2_{L^2(M_0)},
\]
where $L^{2}(M_{0})$ is with respect to the volume measure $\vol_{M_0}$. Here we abuse the notation slightly by identifying sections of $L_{\btheta}$ with equivariant functions in $C_{\btheta}^\infty(M)$. The space $L^2_{\btheta}(M)$ is then defined as the completion of $C^\infty_{\btheta}(M)$ with respect to this norm. 

We now express an arbitrary smooth function in terms of its Fourier modes.

\begin{proposition}\label{prop:fourier-theory}
    Let $f \in C^\infty_{\comp}(M)$. Then 
    \begin{equation}
        \label{equation:floquet-decomposition}
            f = \dfrac{1}{(2\pi)^d} \int_{\mathrm{U}(1)^d} F_{\btheta} ~\dd \btheta,
    \end{equation}
    where $F_{\btheta} \in C_{\btheta}^\infty(M)$ is defined as
\begin{equation}\label{eq:theta-frequency}
    F_{\btheta}(x) := \sum_{\n \in \Z^d} f(\tau_{\n}(x)) e^{-i \n \cdot \btheta}, \quad x \in M.
\end{equation}
Finally, we have the following Parseval identity: for all $f,g \in C^\infty_{\comp}(M)$,
\begin{equation}
\label{equation:parseval}
\langle f,g\rangle_{L^2(M)} = \dfrac{1}{(2\pi)^d} \int_{\mathrm{U}(1)^d} \langle F_{\btheta}, G_{\btheta}\rangle_{L^2(M_0, L_{\btheta})}\, \dd\btheta.
\end{equation}
\end{proposition}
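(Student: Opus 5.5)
The plan is to check the three claims directly from the definition \eqref{eq:theta-frequency}. They are the reconstruction formula \eqref{equation:floquet-decomposition}, the equivariance $F_{\btheta}\in C^\infty_{\btheta}(M)$, and the Parseval identity \eqref{equation:parseval}.

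\textbf{Finiteness and equivariance.} Fix $f\in C^\infty_{\comp}(M)$. Since the $\Z^d$-action is properly discontinuous and $\operatorname{supp} f$ is compact, only finitely many $\n$ satisfy $\tau_{\n}(K)\cap\operatorname{supp} f\neq\emptyset$ for a given compact $K\subset M$. Hence the sum defining $F_{\btheta}$ is locally finite, and $F_{\btheta}$ is smooth in $x$ and real-analytic (a trigonometric polynomial locally) in $\btheta$. Equivariance follows by reindexing $\n\mapsto\n-\kk$:
\[
F_{\btheta}(\tau_{\kk}x)=\sum_{\n} f(\tau_{\n+\kk}x)e^{-i\n\cdot\btheta}=e^{i\kk\cdot\btheta}F_{\btheta}(x),
\]
so $F_{\btheta}\in C^\infty_{\btheta}(M)$.

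\textbf{Reconstruction.} At a fixed $x$ the sum is finite, so we may integrate term by term over $\mathrm{U}(1)^d$. Orthogonality of characters, $\frac{1}{(2\pi)^d}\int e^{-i\n\cdot\btheta}\,\dd\btheta=\delta_{\n,\mathbf 0}$, then leaves only the term $f(\tau_{\mathbf 0}x)=f(x)$. This gives \eqref{equation:floquet-decomposition}.

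\textbf{Parseval.} Choose a measurable fundamental domain $\mathcal F\subset M$ for the $\Z^d$-action, for instance a lift of $M_0$ minus a null set. Then $\pi|_{\mathcal F}$ is a measure-preserving bijection onto $M_0$ up to null sets, since $\pi$ is a local isometry. For $F_{\btheta},G_{\btheta}\in C^\infty_{\btheta}(M)$ the product $F_{\btheta}\overline{G_{\btheta}}$ is $\Z^d$-invariant. The $L^2(M_0,L_{\btheta})$ pairing, which equals $\langle f_{\btheta},g_{\btheta}\rangle_{L^2(M_0)}$ with $s_{\btheta}$ of unit norm, is therefore $\int_{\mathcal F}F_{\btheta}\overline{G_{\btheta}}\,\dd\vol_M$. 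Expanding both sums, which are finite on the relatively compact set $\mathcal F$ (we may take $\mathcal F$ relatively compact), and integrating in $\btheta$ first, orthogonality gives
\[
\frac{1}{(2\pi)^d}\int\langle F_{\btheta},G_{\btheta}\rangle\,\dd\btheta=\sum_{\n}\int_{\mathcal F}f(\tau_{\n}x)\overline{g(\tau_{\n}x)}\,\dd\vol_M(x).
\]
Because $\tau_{\n}$ is an isometry and the sets $\tau_{\n}(\mathcal F)$ tile $M$ up to null sets, the right-hand side equals $\int_M f\bar g=\langle f,g\rangle_{L^2(M)}$.

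The only point needing care, and the main obstacle, is justifying the interchanges of sum and integral. This reduces to the local finiteness above: on relatively compact sets the sums are finite, and they are uniform in $\btheta$. It is also worth noting that the result is independent of the choice of $s_{\btheta}$, because $|s_{\btheta}|=1$ and the pairing of equivariant lifts is intrinsic.
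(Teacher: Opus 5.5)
Your proof is correct and complete. Local finiteness of the sum follows from proper discontinuity, the reconstruction formula is Fourier inversion on $\mathrm{U}(1)^d$, and Parseval follows by unfolding over a relatively compact Borel fundamental domain, with every interchange reducing to a finite sum. Such a domain exists, for example by taking one sheet over each set in a finite cover of $M_0$ by evenly covered open sets and disjointifying. The paper gives no proof of this proposition; it recalls it as background from \cite{Cekic-Lefeuvre-Munoz-Thon-26}, so there is no argument to compare against, and yours is the standard verification.
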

Here, $L^{2}(M)$ is defined with respect to the measure $\vol_{M}$.

We end this section with an auxiliary claim about the result of deriving $f_{\btheta}$ in $\btheta$ and the corresponding operation on $f$. We observe that
\begin{equation}\label{eq:auxiliary-Hj}
    s_{\btheta}^{-1} \partial_{\btheta_j} s_{\btheta} = i H_j
\end{equation} showing directly that $H_j$ is well-defined. 

We fix an open contractible set $U \subset \mathrm{U}(1)^d$ as above and work over $U$. We define
\[
    H_j(x) := \int_{\widehat{x}_0}^x \pi^* \partial_{\btheta_j} \eta_{\btheta},\quad x \in M,\quad 1 \leq j \leq d,
\]
where the integration is along an arbitrary path between the basepoint $\widehat{x}_0 \in M$ and $x$. For a multi-index $\alpha$ we may set $\mathbf{H}^\alpha := \prod_{j = 1}^d H_j^{\alpha_j}$. We then have

\begin{proposition}\label{prop:differentiation-btheta}
    For an arbitrary multi-index $\alpha$ we have
    \[
        \partial^\alpha_{\btheta} f_{\btheta} = (-i)^{|\alpha|}(\mathbf{H}^\alpha f)_{\btheta},\quad f \in C^\infty_{\comp}(M),\quad \btheta \in U.
    \]
\end{proposition}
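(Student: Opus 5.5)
The plan is to reduce to the case $|\alpha|=1$ and then induct on $|\alpha|$. First I unpack the definitions. By \eqref{eq:theta-frequency} and the isomorphism $F_{\btheta}\mapsto f_{\btheta}$, we have $\pi^* f_{\btheta} = s_{\btheta}^{-1}F_{\btheta}$, where
\[
F_{\btheta}(x)=\sum_{\n\in\Z^d} f(\tau_\n x)e^{-i\n\cdot\btheta}.
\]
Since $f$ has compact support, this sum is locally finite. So for each fixed $x$ the expression is a finite sum, smooth in $\btheta\in U$, and we may differentiate term by term. The product rule together with \eqref{eq:auxiliary-Hj} then gives
\[
\partial_{\btheta_j}\bigl(s_{\btheta}^{-1}F_{\btheta}\bigr)(x)=s_{\btheta}^{-1}(x)\sum_{\n}\bigl(-iH_j(x)-i n_j\bigr) f(\tau_\n x)e^{-i\n\cdot\btheta}.
\]

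The key step is to identify $H_j(x)+n_j$ with $H_j(\tau_\n x)$. By definition, $H_j(\tau_\n x)-H_j(x)$ is the integral of $\pi^*\partial_{\btheta_j}\eta_{\btheta}$ along a path from $x$ to $\tau_\n x$. That path projects to a closed loop $\gamma$ in $M_0$ with $\rho(\gamma)=\n$, up to the sign convention in the definition of $M$. By Proposition \ref{prop:eta-theta}(i),(iii), the class of $\eta_{\btheta}$ is the linear function $[\widetilde\eta_{\btheta}]$ with $\int_\gamma\widetilde\eta_{\btheta}=\rho(\gamma)\cdot\btheta$ exactly, with no reduction mod $2\pi$, on the contractible set $U$. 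Differentiating in $\btheta_j$ gives $\int_\gamma\partial_{\btheta_j}\eta_{\btheta}=n_j$. Hence $H_j\circ\tau_\n=H_j+n_j$.

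This is the step I expect to be the main obstacle. One has to check that the cohomology class of the chosen smooth family $\eta_{\btheta}$ is exactly $[\widetilde\eta_{\btheta}]$, and not this class shifted by a $\btheta$-dependent lattice element; on a connected $U$ any such shift is locally constant, so it disappears after differentiation. One also has to match the sign convention so that $s_{\btheta}\circ\tau_\n=e^{i\n\cdot\btheta}s_{\btheta}$ is consistent with $+n_j$. This is exactly the relation that makes the formula $s^{-1}_{\btheta}\partial_{\btheta_j}s_{\btheta}=iH_j$ consistent with equivariance.

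With the shift identity in hand, the sum above equals $-i\,s_{\btheta}^{-1}\sum_\n (H_jf)(\tau_\n x)e^{-i\n\cdot\btheta}$. Since $H_jf\in C^\infty_{\comp}(M)$, this is $-i\,\pi^*(H_jf)_{\btheta}$. Because $\pi^*$ is injective, this yields $\partial_{\btheta_j}f_{\btheta}=-i(H_jf)_{\btheta}$. For a general multi-index we iterate, applying the first-order identity to $\mathbf{H}^\beta f$, which is again compactly supported. The $H_j$ are independent of $\btheta$: they are defined via $\partial_{\btheta_j}\eta_{\btheta}$, which is constant in $\btheta$ when $\eta_{\btheta}$ is chosen linear, e.g. harmonic, as in Proposition \ref{prop:eta-theta}(iii). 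Hence the derivatives commute with multiplication by $H_j$ and the factors collect into $(-i)^{|\alpha|}\mathbf{H}^\alpha$. If instead one only knows that $\eta_{\btheta}$ depends smoothly on $\btheta$, I would first replace it by the harmonic linear choice, which is permitted by Proposition \ref{prop:eta-theta}(iii).
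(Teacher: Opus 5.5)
The paper gives no proof of this proposition. It is imported from \cite{Cekic-Lefeuvre-Munoz-Thon-26}, and only the identity \eqref{eq:auxiliary-Hj} is recorded. Your argument is correct and is the natural one: differentiate $\pi^*f_{\btheta}=s_{\btheta}^{-1}F_{\btheta}$ term by term, use \eqref{eq:auxiliary-Hj} together with $H_j\circ\tau_{\n}=H_j+n_j$, and iterate. Two remarks follow.

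First, the step you flag as the main obstacle is a one-liner. It avoids both the lattice-shift worry and the sign-convention worry. Differentiate $s_{\btheta}\circ\tau_{\n}=e^{i\n\cdot\btheta}s_{\btheta}$ (Proposition \ref{prop:eta-theta}(ii), smooth on $U$ by (iii)) in $\theta_j$, and divide by $s_{\btheta}\circ\tau_{\n}$. This gives $iH_j\circ\tau_{\n}=in_j+iH_j$ directly, with the correct sign built in.

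Second, the affine (e.g.\ harmonic) choice of $\eta_{\btheta}$ is not something you can reduce to after the fact. Changing $\eta_{\btheta}$ changes $s_{\btheta}$, $f_{\btheta}$ and $H_j$ all at once. For a family that is not affine in $\btheta$, the identity is actually false when $|\alpha|\geq 2$: your first-order step, applied with $\btheta$-dependent $H_j$, yields
\[
\partial_{\theta_k}\partial_{\theta_j}f_{\btheta}=-(H_kH_jf)_{\btheta}-i\bigl((\partial_{\theta_k}H_j)f\bigr)_{\btheta}.
\]
Here $\partial_{\theta_k}H_j=\int_{\widehat{x}_0}^{x}\pi^*\partial_{\theta_k}\partial_{\theta_j}\eta_{\btheta}$ is in general a nonzero $\Z^d$-periodic function. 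So the $\btheta$-independence of $\partial_{\theta_j}\eta_{\btheta}$ has to be read as a standing hypothesis of the statement, not reached by a replacement argument. This is how the paper uses it; compare $\ddot{\eta}_{\btheta}=0$ in the proof of Lemma \ref{lemma:resonance-non-degenerate}.
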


\subsection{Functional spaces on Abelian covers}  \label{ssection:functional-spaces}

In this subsection we introduce the functional spaces that appear in the bounds of our main theorems.

Let $g_0$ be the Riemannian metric on $M_0$ and let $g$ denote its lift to $M$ so that $\pi \colon M \to M_{0}$ is a local isometry. Let $\nabla$ be the Levi-Civita connection induced by $g$ on $M$. Recall that, given $u \in C^\infty(M)$ and an integer $s \geq 0$, $\nabla^s u$ is a section of the symmetric $s$-tensor bundle $\mathrm{Sym}^s(T^*M) \to M$ such that
\[
\nabla^s u (x ; v, \dotsc, v) := \partial^s_t u(\gamma(t))|_{t = 0}, \qquad \forall (x,v) \in TM,
\]
where $t \mapsto \gamma(t)$ is the geodesic generated by $(x,v)$. The vector bundle $\mathrm{Sym}^s(T^*M) \to M$ carries a natural inner product in its fibers induced by the Riemannian metric $g$. We note that by definition, for every $\mathbf{n} \in \mathbb{Z}^d$, we have that $\tau_{\mathbf{n}} \colon (M, g) \to (M, g)$ is an isometry and that the differential of $\tau_{\mathbf{n}}$ is an isometry of the fibers of $\mathrm{Sym}^s(T^*M)$.

Given an integer $s \geq 0$, and an open subset $U \subset M$, the $H^s$-norm on $U$ is defined as:
\[
\|u\|^2_{H^s(U)} := \|u\|^2_{L^2(U)} + \|\nabla^s u\|^2_{L^2(U, \mathrm{Sym}^s T^*U)},
\]
and the $C^s$-norm is defined as
\[
\|u\|_{C^s(U)} := \|u\|_{C^0(U)} + \|\nabla^s u\|_{C^0(U, \mathrm{Sym}^s T^*U)}.
\]
Since $\tau_{\mathbf{n}}$ acts by isometries of $\mathrm{Sym}^s(T^*M)$, we have 
\begin{equation}\label{eq:symmetry-CHs}
    \|\tau_{\mathbf{n}}^*u\|_{F^s(U)} = \|u\|_{F^s(\tau_{\mathbf{n}}U)},\quad F \in \{C, H\}. 
\end{equation}
Let $\widehat{x}_0 \in M$ be an arbitrary point in $M$. Let $D \subset M$ be a relatively compact open subset with smooth boundary such that $\widehat{x}_0 \in D$ and $\pi \colon D \to M_0$ is surjective. For nonnegative integers $r, s \geq 0$, we introduce the norm
\[
\|u\|_{B^{s,r}(M)} := \sum_{\n \in \Z^d} \langle \n \rangle^{r} \|u\|_{H^s(\tau_{\mathbf{n}}D)},
\]
where $\langle \n \rangle := (1+|\n|^2)^{1/2}$ is the Japanese bracket, and $|\bullet|$ is the Euclidean norm in $\Z^d$. The space $B^{s,r}(M)$ is defined as the completion of $C^\infty_{\comp}(M)$ with respect to the previous norm. Similarly, we let
\[
\begin{split}
\|u\|_{C^{s,r}(M)} & := \sup_{\n \in \Z^d} \langle \n \rangle^{r} \|u\|_{C^s(\tau_{\mathbf{n}}D)},
\end{split}
\]
and define $C^{s,r}(M)$
as the completion of $C^\infty_{\comp}(M)$ with respect to the norm $\|\bullet\|_{C^{s,r}(M)}$.
It is straightforward to verify that the functional space $F^{s,r}(M)$ is independent of the choice of background metric $g_0$ on $M_0$ and domain $D \subset M$, for $F \in \{B,C\}$ (a different choice of $D$ would give equivalent norms). For non integer values of $r,s \geq 0$, and $F \in \{B,C\}$, the spaces $F^{s,r}(M)$ are defined by real interpolation. 

An open subset $U \subset \mathrm{U}(1)^d$ is called \emph{good} if there exists an open neighbourhood $V \subset \mathrm{U}(1)^d$ of $\overline{U}$ such that $V$ is contractible. In the next lemma, given $f \in C^\infty_{\comp}(M)$, and $U \subset \mathrm{U}(1)^d$, a good open subset, we write $f_\bullet \in C^\infty(U,C^\infty(M_0))$ to denote the function $U \to C^\infty(M_0), \btheta \mapsto f_{\btheta}$, defined in \S \ref{ssection:floquet-theory}. For $k \geq 0$ an integer, we will write 
\[
    \|f_{\bullet}\|_{C^k(U, H^s(M_0))} := \max_{|\alpha| \leq k} \sup_{\btheta \in U} \|\partial^{\alpha}_{\btheta} f_{\btheta}\|_{H^s(M_0)}.
\]

The following lemma is taken from \cite[\S2.4]{Cekic-Lefeuvre-Munoz-Thon-26}

\begin{lemma} Let $n=\dim M_{0}$.
\begin{enumerate} [label=(\roman*), itemsep=5pt]
    \item \label{lemma:relation-spaces}
    Let $r, s \geq 0$. For all $\eps > 0$, there exists a constant $C > 0$ such that:
    \[
    \begin{array}{lll}
    \|u\|_{B^{s,r}(M)} & \leq C \|u\|_{C^{s,r+d+\eps}(M)}, & \qquad \forall u \in C^{s,r+d+\eps}(M), \\
    \|u\|_{C^{s,r}(M)} & \leq C \|u\|_{B^{s+n/2+\eps,r}(M)}, & \qquad \forall u \in B^{s+n/2+\eps,r}(M).
    \end{array}
    \]
    \item \label{lemma:spaces_theta_sobolev}
    Let $U \subset \mathrm{U}(1)^d$ be a good open subset, $k \geq 0$ an integer, and $s \geq 0$ a nonnegative real number. Then, there exists $C > 0$ such that for all $f \in B^{s,k}(M)$
    \[
    \|f_\bullet\|_{C^k(U,H^s(M_0))} \leq C \|f\|_{B^{s,k}(M)}.
    \]
\end{enumerate}    
\end{lemma}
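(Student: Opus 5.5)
The two inequalities in (i) are comparisons between an $\ell^1$ and an $\ell^\infty$ sum over the translates $\tau_\n D$, together with a local Sobolev embedding on a single translate. Part (ii) follows by differentiating the Floquet sum \eqref{eq:theta-frequency} in $\btheta$.

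\emph{Part (i).} For the first inequality, fix $\n$. Since $D$ is relatively compact with smooth boundary, $\|u\|_{H^s(\tau_\n D)} \leq C\|u\|_{C^s(\tau_\n D)}$ for integer $s$, and $C$ is independent of $\n$ by \eqref{eq:symmetry-CHs}, because $\tau_\n$ is an isometry. Then
\[
\sum_{\n}\langle \n\rangle^r\|u\|_{H^s(\tau_\n D)} \leq C\sum_\n \langle \n\rangle^{-d-\eps}\,\sup_{\mathbf m}\langle \mathbf m\rangle^{r+d+\eps}\|u\|_{C^s(\tau_{\mathbf m} D)},
\]
and $\sum_\n\langle\n\rangle^{-d-\eps}<\infty$ gives the claim.

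For the second inequality, the local Sobolev embedding $H^{s+n/2+\eps}(\tau_\n D)\hookrightarrow C^s(\tau_\n D)$ holds with a constant uniform in $\n$, again by invariance under $\tau_\n$ and since $\dim M=n$. It gives
\[
\langle\n\rangle^r\|u\|_{C^s(\tau_\n D)} \leq C\langle \n\rangle^r\|u\|_{H^{s+n/2+\eps}(\tau_\n D)}.
\]
The right-hand side is bounded by the full sum defining the $B^{s+n/2+\eps,r}$ norm, so $\sup \leq \sum$ finishes this case.

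Both bounds hold for integer indices. The non-integer cases follow by real interpolation, since both sides are interpolation scales; the weight $\langle\n\rangle^r$ interpolates as a weighted $\ell^1$ or $\ell^\infty$ space. Density of $C^\infty_\comp$ then extends the bounds to the completions.

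\emph{Part (ii).} Work with $f\in C^\infty_\comp(M)$ and conclude by density. By Proposition \ref{prop:differentiation-btheta}, $\partial^\alpha_\btheta f_\btheta=(-i)^{|\alpha|}(\mathbf H^\alpha f)_\btheta$, so it suffices to show
\[
\|g_\btheta\|_{H^s(M_0)}\leq C\|g\|_{B^{s,0}}, \qquad \|\mathbf H^\alpha f\|_{B^{s,0}}\leq C\|f\|_{B^{s,|\alpha|}},
\]
uniformly in $\btheta\in U$.

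For the first bound, write $g_\btheta=s_\btheta^{-1}G_\btheta$ restricted to a fundamental region. Use a finite cover of $M_0$ by small balls lifted into $D$, and the smooth dependence of $s_\btheta$ on $\btheta\in\overline U$, which holds because $U$ is good. Each local $H^s$ norm of $G_\btheta=\sum_\n \tau_\n^*g\,e^{-i\n\cdot\btheta}$ is then bounded by $\sum_\n\|g\|_{H^s(\tau_\n D)}$.

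For the second bound, note that $dH_j=\pi^*\partial_{\btheta_j}\eta_\btheta$ is periodic, with all derivatives bounded uniformly for $\btheta\in\overline U$. Moreover $H_j\circ\tau_\n - H_j$ is a constant linear in $\n$, by Proposition \ref{prop:eta-theta}(i). Hence on $\tau_\n D$ the function $H_j$ and all its derivatives are $O(\langle\n\rangle)$. The Leibniz rule then gives $\|\mathbf H^\alpha f\|_{H^s(\tau_\n D)}\leq C\langle\n\rangle^{|\alpha|}\|f\|_{H^s(\tau_\n D)}$, and summing over $\n$ gives the second bound.

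The main obstacle is this last step: establishing uniform control of $H_j$ on $\tau_\n D$, and the uniformity of the trivialization $s_\btheta$ over $\overline U$.
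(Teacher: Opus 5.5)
The paper does not prove this lemma: it is imported from \cite[\S2.4]{Cekic-Lefeuvre-Munoz-Thon-26}, so there is no in-text argument to compare with. Your proof is correct and self-contained.

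In (i) you compare an $\ell^1$ sum with an $\ell^\infty$ supremum over the translates $\tau_\n D$. One direction uses $\sum_{\n\in\Z^d}\langle\n\rangle^{-d-\eps}<\infty$, the other uses $\sup\le\sum$. Both are combined with local estimates on $D$, which transfer to every $\tau_\n D$ with the same constant because $\tau_\n$ is an isometry. In (ii), the reduction through Proposition~\ref{prop:differentiation-btheta} to two bounds is sound. These are a uniform bound $\|g_{\btheta}\|_{H^s(M_0)}\lesssim\|g\|_{B^{s,0}}$ and a weight-shifting bound $\|\mathbf{H}^\alpha f\|_{B^{s,0}}\lesssim\|f\|_{B^{s,|\alpha|}}$.

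\emph{What you call the main obstacle is already settled by your own argument.} Differentiate $\int_c\eta_{\btheta}=\rho(c)\cdot\btheta$; the $2\pi\Z$ ambiguity is locally constant in $\btheta$. This gives exactly $H_j\circ\tau_\n=H_j+n_j$. Hence $\|H_j\|_{C^m(\tau_\n D)}\leq C_m\langle\n\rangle$ follows from compactness of $\overline D$ and periodicity of $dH_j$. Uniform control of $s_{\btheta}^{\pm1}$ on $\overline D$ for $\btheta\in\overline U$ is exactly what goodness of $U$ provides.

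\emph{A gauge remark on (ii).} Proposition~\ref{prop:differentiation-btheta} in the stated form relies on the harmonic gauge, in which $\partial_{\theta_j}\eta_{\btheta}$ does not depend on $\btheta$. In another gauge, extra terms involving $\partial_{\btheta}^\beta H_j$ appear. These are harmless: $\partial_{\btheta}\partial_{\theta_j}\eta_{\btheta}$ is exact on $M_0$, so these terms are bounded with all their derivatives.

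\emph{A correction on integer indices in (i).} Your claim that both bounds hold for integer indices is not quite accurate for the second inequality. Even for integer $s$, the space $B^{s+n/2+\eps,r}$ has a non-integer Sobolev index, and the paper defines such spaces by real interpolation. So the step ``bounded by the full sum'' needs a justification. Interpolate the single-translate restriction map $u\mapsto u|_{\tau_\n D}$, which has norm at most $\langle\n\rangle^{-r}$ from $B^{m,r}$ to $H^m(\tau_\n D)$ for integer $m$. The resulting local space on $D$ is a Besov-type space of smoothness $s+n/2+\eps$ and integrability $2$. It still embeds into $C^s(\overline D)$, thanks to the $\eps$ of room.

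The analogous issue for non-integer $s$ in (ii) is a matter of the paper's unspecified interpolation convention, not a flaw in your argument. To land in $H^s(M_0)$ one needs interpolation parameter $q=2$, or natural fractional local norms, for which your direct argument works as written.
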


\subsection{Borel--Weil calculus} \label{ssection:bw-calculus}
    
    The Borel--Weil calculus was introduced in \cite{Cekic-Lefeuvre-24} as a key tool to study $G$-equivariant (pseudo)differential operators on $G$-principal bundles $P_0 \to M_0$, when $G$ is a compact Lie group. Given a unitary representation $\rho \colon G \to \mathrm{U}(V_\rho)$, we denote by $E_\rho := M_0 \times_\rho V_\rho$ the corresponding associated vector bundle, defined as the set of equivalence classes $[z,\xi] \sim [z.g, \rho(g)^{-1}\xi]$, where $z \in P_0, \xi \in V_\rho$. 
    
    \subsubsection{General description} Before detailing the construction, let us first explain the general idea of the Borel--Weil semiclassical calculus. Let $\mathbf{Q} \colon C^\infty(P_0) \to C^\infty(P_0)$ be a $G$-equivariant differential operator acting on functions on $P_0$. That is $\mathbf{Q}$ commutes with the right-action of $G$, i.e. $R_g^*\mathbf{Q}=\mathbf{Q}R_g^*$ for all $g \in G$. By the Peter--Weil theorem, any function $f \in C^\infty(P_0)$ can be decomposed into Fourier modes in each fiber of the principal bundle. Namely to any such $f$, it is possible to associate its Fourier transform
    \begin{equation}
        \label{equation:fourier-decomposition}
    \mc{F}(f) := \bigoplus_{\rho \in \widehat{G}} f_\rho,
    \end{equation}
    where the sum runs over $\widehat{G}$, the set of all unitary irreducible representations of $G$, $f_\rho \in C^\infty(M_0, E_\rho^{\oplus \dim E_\rho})$ and $E_\rho \to M_{0}$ is the corresponding associated vector bundle. 
    
    As $\mathbf{Q}$ is equivariant, it acts diagonally on the above decomposition, that is, it induces operators $Q_\rho \colon C^\infty(M_0,E_\rho) \to C^\infty(M_0,E_\rho)$. In certain problems, one is interested in the behavior of the operator $Q_\rho$ as $\rho$ ``tends to infinity'' in the space of irreducible representations. However, this raises a number of issues, the first one being that the rank of $E_\rho$ diverges to $+\infty$ as $\rho$ ``increases'' unless $G$ is Abelian, thus making it very hard to keep track effectively of the behavior of the operator.
    
    A convenient solution is to realize geometrically $E_\rho$, using the Borel--Weil theorem, as a space of holomorphic sections of line bundles defined over a space associated with $P_0$ called the \emph{flag bundle}. More precisely, $V_\rho$ can be explicitly realized as $V_\rho \simeq H^0(G/T, J_\rho)$, where $T < G$ is a maximal torus and $J_\rho$ is a holomorphic line bundle induced by the representation. A section of $E_\rho \to M_0$ can then be identified with a section of the flag bundle $F_0 := P_0/T \to M_0$ with values in a certain complex line bundle $L_\rho$ which is \emph{fiberwise holomorphic}, that is holomorphic in restriction to every fiber $P_0(x_0)/T \simeq G/T$ where $x_0 \in M_{0}$. Here, $L_\rho$ is a complex line bundle whose restriction to $P_0(x_0)/T \simeq G/T$ is holomorphically equivalent to $J_\rho \to G/T$.
    
    \subsubsection{Lie-theoretic preliminaries} \label{sssection:lie} See \cite[Section 2.1.2]{Cekic-Lefeuvre-24} for further discussion. Let $G$ be a compact Lie group and $\widehat{G}$ denote the set of all irreducible representations. The Lie algebra $\fg$ decomposes as
    \[
    \fg = \mathfrak{z}(\fg) \oplus \fg' = \mathfrak{z}(\fg) \oplus [\fg,\fg],
    \]
    where $\mathfrak{z}(\fg)$ denotes the center of $\fg$. Let $T < G$ be a maximal torus. Its Lie algebra then splits as
    \[
    \mathfrak{t} = \mathfrak{z}(\fg) \oplus \mathfrak{t}',
    \]
    where $\mathfrak{t}' := \mathfrak{t} \cap \fg'$. Let $\mathfrak{a}^{\mathrm{ss}}_{+} \subset (i\mathfrak{t}')^{*}$ be the polyhedral convex positive cone spanned by the set of positive roots $\Delta^{+}(\mathfrak{g}_{\mathbb{C}})$ of the group
    (the superscript $\mathrm{ss}$ stands for semisimple, and $(i\bullet)^*$ denotes real-linear functionals from $\bullet$ to $i\mathbb{R}$), and set
    \[
    \mathfrak{a}_{+} := (i\mathfrak{z}(\mathfrak{g}))^{*} \oplus \mathfrak{a}^{\mathrm{ss}}_{+}
    \subset (i\mathfrak{t})^{*},
    \]
    the positive Weyl chamber.

    We introduce
    \[
    a := \dim \mathfrak{z}(\fg), \qquad b:= \dim \mathfrak{t}', \qquad r := a+b = \mathrm{rank}(G).
    \]
    The set $\mc{A}$ of \emph{analytically integral weights} corresponds to
    \[
    \mc{A} := \{\alpha \in (i\mathfrak{t})^* ~:~ \alpha(H) \in 2\pi i \Z, \forall H \in \mathfrak{t}, \exp(H)=1\}.
    \]
    Let $\{\lambda_1,\dots,\lambda_a\}$ be a system of generators of $\mathfrak{z}(\mathfrak{g})$, chosen such that any $\alpha \in (i\mathfrak{z}(\mathfrak{g}))^{*} \cap \mathcal{A}$ can be written as
    \[
    \alpha = \sum_{i=1}^{a} k_i \lambda_i , \qquad k_i \in \mathbb{Z}.
    \]
    Let $\{\lambda_{a+1},\dots,\lambda_r\}$ be a system of generators of
    \(\mathfrak{a}^{\mathrm{ss}}_{+} \cap \mathcal{A}\), chosen such that any
    \(\alpha \in \mathfrak{a}^{\mathrm{ss}}_{+} \cap \mathcal{A}\) can be written as
    \[
    \alpha = \sum_{i=a+1}^{d} k_i \lambda_i , \qquad k_i \in \mathbb{Z}_{\geq 0}.
    \]
    Finally, we can consider the following surjective map
    \[
    \phi \colon \mathbb{Z}^{a} \times \mathbb{Z}_{\ge 0}^{r-a} \longrightarrow \mathfrak{a}_{+} \cap \mathcal{A},
    \qquad
    \bk \longmapsto \sum_{i=1}^{r} k_i \lambda_i.
    \]
    The map $\phi$ may fail to be injective. To any element $\phi(\bk)$, one can associate the (unique) irreducible representation of $G$ with highest weight is $\phi(\bk)$. Hence, $\widehat{G}$ is parametrized by $\mathbb{Z}^{a} \times \mathbb{Z}_{\ge 0}^{b}/\sim$, where $\bk \sim \bk'$ if and only if $\phi(\bk)=\phi(\bk')$.
    
    The homogeneous space $G/T$ is called the \emph{flag manifold} associated with $G$; it is equipped with a natural complex structure. The Borel--Weil theorem asserts that the representation with highest weight $\phi(\bk)$ can be realized as
    \[
    H^0(G/T, \mathbf{J}^{\otimes \mathbf{k}}), \qquad \mathbf{J}^{\otimes \mathbf{k}} := J_1^{\otimes k_1} \otimes \dotsb \otimes J_r^{\otimes k_r}, 
    \]
    where $J_i \to G/T$ is a holomorphic Hermitian line bundle over $G/T$. For $i=1,\dotsc,a$, $J_i \to G/T$ is trivial. The left action of $G$ on $G/T$ lifts naturally to a fiberwise (isometric) action on $H^0(G/T,\mathbf{J}^{\otimes \mathbf{k}})$. This left action corresponds precisely to the irreducible representation of highest weight $\phi(\bk)$.
    
    We emphasize that if there exists $k_i \neq 0$ for some $i \in \{a+1,\dotsc,r\}$, then $\mathbf{J}^{\otimes \mathbf{k}} \to G/T$ is not topologically trivial (see \cite[Proposition 2.1.8]{Cekic-Lefeuvre-24}). If $k_{a+1}=\dotsb=k_r=0$, then $\mathbf{J}^{\otimes \mathbf{k}} \to G/T$ is trivial and the space of holomorphic sections $H^0(G/T, \mathbf{J}^{\otimes \mathbf{k}})$ reduces to $\C$.

    \subsubsection{The flag bundle and Borel--Weil calculus} The previous construction can be extended to the principal bundle $P_0 \to M_0$ as each fiber is isomorphic to $G$. We let $F_0 := P_0/T$ be the flag bundle over $M_0$ and $\mathbf{L}^{\otimes \bk} \to F_0$ be the line bundles obtained by the above construction. Each fiber $P_0(x)/T \simeq G/T$ for $x \in M$ is a complex manifold. In addition, the line bundles $\mathbf{L}^{\otimes \bk} \to F_0$ are holomorphic in restriction to each fiber. We can thus define $C^\infty_{\hol}(F_0,\Lk)$ as the space of \emph{fiberwise holomorphic} sections on $F_0$ with values in $\Lk$, that is such that the restriction to each fiber $P_0(x)/T$ is holomorphic. 
    There is also an $L^2$-orthogonal projector onto the space of fiberwise holomorphic sections $C^\infty_{\hol}(F_0,\Lk)$
    \[
    \Pi_{\bk} \colon C^\infty(F_0,\Lk) \to C^\infty_{\hol}(F_0,\Lk), \qquad L^2(F_0,\Lk) \to L^2_{\hol}(F_0,\Lk).
    \]
    
    The Fourier transform \eqref{equation:fourier-decomposition} should then be seen as a map
    \begin{equation}
        \label{equation:fourier-group}
    \mc{F} \colon C^\infty(P_0) \to \bigoplus_{\mathbb{Z}^{a} \times \mathbb{Z}_{\ge 0}^{b}/\sim} C^\infty_{\hol}(F_0,\Lk)^{\oplus d_{\bk}},
    \end{equation}
    where $d_{\bk}$ denotes the dimension of the corresponding irreducible representation with highest weight $\phi(\bk)$. It is an isometry for the $L^2$-scalar product.
    
    We now further assume that $P_0$ is equipped with a $G$-equivariant connection. It determines a horizontal subbundle $\HH_{P_{0}}\subset TP_{0}$, and an Ehresmann connection on $p_{F_{0}} \colon F_{0} \to M_{0}$, that is, a splitting
    \[
    TF_0 = \HH_{F_{0}} \oplus \V_{F_{0}},
    \]
    where $\V_{F_{0}} = \ker dp_{F_{0}}$ is the vertical fiber, and $\HH_{F_{0}}$ is the horizontal space provided by the connection. We also introduce $\HH_{F_{0}}^{*},\V_{F_{0}}^{*} \subset T^{*}F_{0}$ such that $\HH_{F_{0}}^{*}(\V_{F_{0}})=0=\V_{F_{0}}^{*}(\HH_{F_{0}})$. The map $dp_{F_0} \colon \HH_{F_{0}} \to TM_0$ is an isomorphism and therefore $dp_{F_0}^\top \colon T^{*}M_{0} \to \HH_{F_{0}}^{*}$ is an isomorphism as well.
    
    In addition, the $G$-equivariant connection on $P_{0}$ also induces a connection on each line bundle $\Lk \to F_{0}$. We are therefore in the framework of twisted quantization where one can consider the class $\Psi^m_{h,\bk}(F_{0},\mathbf{L})$ of twisted semiclassical pseudodifferential operators originally introduced by Charles \cite{Charles-00} for tensor powers of a single line bundle. See also \cite[Section 3.2]{Cekic-Lefeuvre-24} for a complete discussion. An element $\mathbf{A} \in \Psi^m_{h,\bk}(F_0,\mathbf{L})$ is a \emph{family} of operators 
    \[
    \mathbf{A}_{h,\bk} \colon C^\infty(F_0, \Lk) \to C^\infty(F_0,\Lk)
    \]
    defined for $h > 0$ and $\bk \in \Z^{a} \times \Z^b_{\geq 0}$ such that $h|\bk|\leq 1$, with the following property. For any contractible open subset $U \subset F_0$, if $s_i \in C^\infty(F_0,L_i)$ are local trivializing sections of pointwise norm $|s_{i}| = 1$, then there exists a family of standard $h$-semiclassical pseudodifferential operators $A_{h,\bk} \in \Psi^m_h(U)$ such that for all $f \in C^\infty_{\comp}(U)$:
    \[
    \mathbf{A}_{h,\bk}(f \mathbf{s}^{\bk}) = A_{h,\bk}(f) \mathbf{s}^{\bk},
    \]
    where equality holds on $U$.
    
    To introduce the Borel--Weil calculus, we need a preliminary notion:
    
    \begin{definition}[Admissible operators]
    Let $\mathbf{A} \in \Psi^m_{h,\bk}(F_0,\mathbf{L})$. The operator is \emph{admissible} if $[\mathbf{A},\Pi_{\bk}] \in h^\infty \Psi^{-\infty}_{h,\bk}(F_0,\mathbf{L})$.
    \end{definition}
    
    This means that the family of operators $\mathbf{A}_{h,\bk}$ preserve fiberwise holomorphic sections modulo negligible remainders. This leads to the following definition:
    
    \begin{definition}[Borel--Weil operators]
        For $m \in \R$, the class of Borel--Weil semiclassical operators of order $m$ on $P_0$ is defined as
        \[
        \Psi^m_{h,\mathrm{BW}}(P_0) := \{(\Pi_{\bk} \mathbf{A} \Pi_{\bk})_{h > 0, \bk \in \Z^a \times \Z^b_{\geq 0}, h|\bk|\leq 1} : \mathbf{A} \in \Psi^m_{h,\bk}(F_0,\mathbf{L}) \text{ is admissible}\}.
        \]
    \end{definition}

    It can be verified that $\Psi^\bullet_{h,\mathrm{BW}}(P_0)$ is indeed an algebra, see \cite[Lemma 3.3.11]{Cekic-Lefeuvre-24}. All $G$-equivariant differential operators naturally belong to $\Psi^m_{h,\mathrm{BW}}(P_0)$. The phase space corresponding to this quantization is $\HH_{F_{0}}^* \subset T^*F_0$. Observe that when the bundle $P_0 \to M_0$ is trivial, that is $P_0 \simeq M_0 \times G$, the phase space is isomorphic to $\HH_{F_{0}}^* \simeq T^*M_0 \times G/T$. One should think of operators in the Borel--Weil calculus as standard semiclassical pseudodifferential operators in the base variable (i.e. on $M_0$) \emph{with values in} Toeplitz operators on the complex manifold $G/T$. This statement can be made precise, see \cite[Theorem 3.3.4]{Cekic-Lefeuvre-24}.
    
    To any $\mathbf{A} \in \Psi^m_{h,\mathrm{BW}}(P_0)$, one can associate a principal symbol $\sigma_{\mathbf{A}} \in S^m(\HH_{F_{0}}^*)$. This is nothing but the restriction of the principal symbol of $\mathbf{A}$ to $\HH_{F_{0}}^*$, where $\mathbf{A}$ is seen as an operator in the twisted algebra $\Psi^m_{h,\bk}(F_0,\mathbf{L})$. Conversely, given $a \in S^m(\HH_{F_{0}}^*)$, there is a well-defined quantization procedure $\mathbf{A} := \Op_h^{\mathrm{BW}}(a) \in \Psi^m_{h,\mathrm{BW}}(P_0)$ constructing an operator in the Borel--Weil calculus such that $\sigma_{\mathbf{A}} = a$. All standard semiclassical results hold for Borel--Weil operators (invertibility of elliptic operators, propagation of singularities, Calder\'on---Vaillancourt, etc.), see \cite[Section 3.3]{Cekic-Lefeuvre-24} for further discussion. Finally, we note that the above construction can be carried out more generally for operators acting on sections of a vector bundle, see \cite[Remark 3.3.17]{Cekic-Lefeuvre-24}.

    \subsection{Connections, curvature and holonomy} \label{subsection:curvatures}
    
    Let $\nabla^{P_{0}}$ be the fixed $G$-equivariant connection on $P_{0} \to M_{0}$, and let $\mathrm{Ad}(P_{0}):= P_{0}\times_{\mathrm{Ad}}\mathfrak{g} \to M_{0}$ be the adjoint bundle associated to the adjoint representation $\mathrm{Ad} \colon G \to \mathrm{End}(\mathfrak{g})$. Denote by $F_{\mathrm{Ad}(P_{0})} \in C^{\infty} (M_{0}, \Lambda^{2} T^{*} M_{0} \otimes \mathrm{Ad}(P_{0}))$ the curvature of $\nabla^{P_{0}}$ on $P_{0}$. 

    We proceed to define certain differential operators that will appear in our proofs. Since each fiber of $F_{0}\to M_{0}$ is a complex manifold, the vertical tangent bundle admits the decomposition $\mathbb{V}_{F_{0}}\otimes \C =\mathbb{V}_{F_{0}}^{1,0} \oplus \mathbb{V}_{F_{0}}^{0,1}$. The complexified vertical cotangent bundle decomposes as $\mathbb{V}_{F_{0}}^{*}\otimes \C =(\mathbb{V}_{F_{0}}^{1,0})^{*} \oplus (\mathbb{V}_{F_{0}}^{0,1})^{*}$. Since $\mathbf{L}^{\otimes\bk}\to F_{0}$ is Hermitian and fiberwise holomorphic, there is a unique fiberwise partial Chern connection
    \begin{equation} \label{eq:chern}
        D_{\bk}^{\mathrm{Chern}} \colon C^{\infty}(F_{0},\mathbf{L}^{\otimes\bk}) \to C^{\infty}(F_{0}, \mathbf{L}^{\otimes\bk} \otimes \mathbb{V}_{F_{0}}^{*}).
    \end{equation}
    After complexification, it decomposes according to vertical type as $D_{\bk}^{\mathrm{Chern}} = \partial_{\bk} + \overline{\partial}_{\bk}$, where 
    \[
    \partial_{\mathbf{k}} \colon C^{\infty}(F_{0},\mathbf{L}^{\otimes\bk}) \to C^{\infty}( F_{0}, \mathbf{L}^{\otimes\bk} \otimes (\mathbb{V}_{F_{0}}^{1,0})^{*}), \qquad \overline{\partial}_{\bk} \colon C^{\infty}(F_{0},\mathbf{L}^{\otimes\bk}) \to C^{\infty}( F_{0}, \mathbf{L}^{\otimes\bk} \otimes (\mathbb{V}_{F_{0}}^{0,1})^{*} ).
    \]
    The operator $\overline{\partial}_{\mathbf{k}}$ is precisely the one defining the fiberwise holomorphic structure. Thus,
    \[
    C_{\mathrm{hol}}^{\infty}(F_{0},\mathbf{L}^{\otimes\bk}) = C^{\infty}(F_{0},\mathbf{L}^{\otimes \bk}) \cap \ker\overline{\partial}_{\mathbf{k}}.
    \]
    More explicitly, if $Z=Z^{1,0}+Z^{0,1}$ is a complexified vertical vector field, then $(D_{\bk}^{\mathrm{Chern}})_{Z} = (\partial_{\bk})_{Z^{1,0}} + (\overline{\partial}_{\bk})_{Z^{0,1}}$. In particular, if $u$ is fiberwise holomorphic, then $(D_{\bk}^{\mathrm{Chern}})_{Z}u = (\partial_{\bk})_{Z^{1,0}}u$.
    
    The parallel transport on $F_{0}$ can be lifted to a fiberwise (i.e. preserving the fibers of $\mathbf{L}^{\otimes \bk}$) parallel transport $\mathbf{L}^{\otimes \bk} \to F$ along horizontal directions, for all $\bk \in \widehat{G}$. Furthermore, it can be shown that it respects the fiberwise holomorphic structures \cite[Proposition~2.2.6]{Cekic-Lefeuvre-24}. This allows to define a partial horizontal connection
    \[
    \nabla_{\bk}^{\mathbb{H}_{F_{0}}} \colon C^{\infty} (F_{0}, \mathbf{L}^{\otimes \bk}) \to C^{\infty} (F_{0}, \mathbf{L}^{\otimes \bk} \otimes \mathbb{H}_{F_{0}}^{*})
    \]
    on $\mathbf{L}^{\otimes \bk} \to F_{0}$ as follows. Given a vector field $X \in C^{\infty}(M_{0}, T M_{0})$, let $\varphi$ be the flow generated on $M_{0}$ and $\psi^{F_{0}}$ be the flow generated by its horizontal lift $X^{\mathbb{H}_{F_{0}}}$ to $F_{0}$. Note that $\psi_t^{F_{0}}(x, w T)=(\varphi_t x, \tau_{\gamma(t)}^{F_{0}}(w T))$, where $\gamma(t)$ denotes the flow segment $(\varphi_{s} x)_{s \in[0, t]}$. We then set for $s \in C^{\infty}(F_{0}, \mathbf{L}^{\otimes \bk})$,
    \[
    (\nabla_{\bk}^{\HH_{F_{0}}})_{X^{\mathbb{H}_{F_{0}}}} s(w T):=\partial_{t}|_{t=0}(\tau_{\gamma(t)}^{\bk})^{-1}(s(\psi_t^{F_{0}}(w T))), \quad w T \in F_{0}.
    \]
    Combining this with the Chern connection, one obtains 
    a global connection
    \begin{equation} \label{eq:globa-connection}
        \overline{\nabla}_{\mathbf{k}}:=D_{\mathbf{k}}^{\text{Chern }}+\nabla_{\mathbf{k}}^{\HH_{F_{0}}} \colon C^{\infty}(F_{0}, \mathbf{L}^{\otimes \bk}) \to C^{\infty}(F_{0}, \mathbf{L}^{\otimes \bk} \otimes T^{*} F_{0}).
    \end{equation}
    Denote by $\mathbf{F}_{\overline{\nabla}}:=(F_{\overline{\nabla}_1}, \ldots, F_{\overline{\nabla}_d}) \in C^{\infty}(F_{0}, \Lambda^2 T^* F_{0})^{\oplus d}$ the vector of curvatures of the respective line bundles $L_{1}, \ldots, L_{d} \to F_{0}$. By construction, the curvature $F_{\overline{\nabla}_{\mathbf{k}}}$ of $\overline{\nabla}_{\mathbf{k}}$ satisfies 
    \begin{equation} \label{eq:bar-curvature}
        F_{\overline{\nabla}_{\bk}}=\bk \cdot \mathbf{F}_{\overline{\nabla}}=\sum_{i=1}^d k_{i} F_{\overline{\nabla}_{i}}.
    \end{equation}
    Given $x \in M_{0}$, the curvature of $\nabla^{P_{0}}$ is \emph{nondegenerate at $x$} if the following holds:
    \[
    \mathrm{Span} \{ F_{\operatorname{Ad}(P_{0})}(x)(X, Y) \mid X, Y \in T_x M_{0} \}=\mathrm{Ad}((P_{0})_{x}) .
    \]
    The curvature is globally nondegenerate if it is nondegenerate at every $x \in M_{0}$. 

    The \emph{boundary at infinity} of the Weyl chamber, $\partial_{\infty} \mathfrak{a}_{+}$, is the set of all possible limits $\bk /|\bk|$ as $|\bk| \to \infty$ and $\bk \in \widehat{G}$. It can be shown that $\partial_{\infty}\mathfrak{a}_{+} \simeq \mathbb{S}^{d-1} \cap (\R^{a} \times \R_{+}^{b})$, hence, the boundary at infinity is compact.

    For $\mathbf{l}=(\ell_{1},\ldots,\ell_{d}) \in \R^{a} \times \R_{+}^{b}$, define 
    \[
    \mathbf{l} \cdot \mathbf{F}_{\overline{\nabla}}=\sum_{i=1}^d \ell_{i} F_{\overline{\nabla}_{i}}.
    \]
    For $\mathbf{l} \in \partial_{\infty} \mathfrak{a}_{+} \simeq \mathbb{S}^{d-1} \cap (\R^{a} \times \R_{+}^{b})$, define the following number:
    \[
    F_{\min}(\mathbf{l}):=\min _{wT \in F_{0}} \max _{\substack{X, Y \in T_{p_{F_{0}}(wT)} M_{0} \\|X|=|Y|=1}}-i \times \mathbf{l} \cdot \mathbf{F}_{\overline{\nabla}}(wT)(X^{\mathbb{H}_{F_{0}}}, Y^{\mathbb{H}_{F_{0}}}) \geq 0,
    \]
    where $X^{\mathbb{H}_{F_{0}}}, Y^{\mathbb{H}_{F_{0}}}$ are the horizontal lifts of $X, Y$ respectively to $F_{0}$. Finally, define
    \[
    F_{\min }:=\min _{\mathbf{l} \in \partial_{\infty} \mathfrak{a}_{+}} F_{\min }(\mathbf{l}) \geq 0 .
    \]
    Since $\partial_{\infty} \mathfrak{a}_{+}$ is compact, this is indeed a minimum. Finally, by \cite[Lemma~2.2.14]{Cekic-Lefeuvre-24} the curvature $F_{\operatorname{Ad}(P)}$ is globally nondegenerate if and only if $F_{\min }>0$.

    As we mentioned in the introduction, we can pullback $\nabla^{P_{0}}$ to $P$ to obtain a connection $\nabla=\pi^{*}\nabla^{P_{0}}$. Fix $u \in P$ over $x_{0} \in M_{0}$. For every loop $\gamma$ at $x_{0}$, we obtain a unique element $h_{\gamma} \in G \times \Z^{d}$, such that the lift starting at $u$ ends at $h_{\gamma}.u$. We define the \emph{holonomy group} of $(P\to M_{0},\nabla)$ by 
    \[
    \mathrm{Hol}_{u}(P \to M_{0},\nabla)=\{h_{\gamma}:[\gamma]\in \pi_{1}(M_{0},x_{0})\} \leq G \times \Z^{d}.
    \]
    When there is no risk of confusion, we will simply write $\mathrm{Hol}(P,\nabla)$. Note that we can extend $\rho$ from $\pi_{1}(M_{0})$ to $\Omega_{x_{0}}M_{0}$, the set of loops based at $x_{0}$, by $\rho(\gamma)=\rho([\gamma])$. Writing $h_{\gamma}=(g_{\gamma},\rho(\gamma))$, we then have
    \begin{equation} \label{eq:holonomy}
        \mathrm{Hol}(P,\nabla)=\{(g_{\gamma},\rho(\gamma)):\gamma \in \Omega_{x_{0}}M_{0}\}.
    \end{equation}
    
\section{Abelian extensions of the Laplacian}

\label{section:abelian-extension-laplacian}

In this section, we prove the decay of correlations for Abelian extensions of the heat operator. 

\subsection{Preliminaries}

Recall that $\Delta_M$ denotes the Laplacian on $M$ and that we denote by $\Delta_{M_0}$ the Laplacian on $M_0$. We will assume that the volume of $M_{0}$ is normalized to be equal to $1$.

\subsubsection{Floquet reduction}\label{sssection:conjugacy} Using the isomorphism
\begin{equation}
    \label{equation:iso}
C^\infty(M_0) \to C^\infty_{\btheta}(M), \qquad f \mapsto \pi^*f \cdot s_{\btheta},
\end{equation}
introduced in \S\ref{ssection:floquet-theory}, we can compute, for $F_{\btheta} = \pi^*f \cdot s_{\btheta}$:
\[
\Delta_{M}F_{\btheta}=\Delta_{M}(\pi^{*}f)s_{\btheta}+2ds_{\btheta}\cdot d(\pi^{*}f)+(\pi^{*}f)\Delta_{M}s_{\btheta}
\]
and using Proposition \ref{prop:eta-theta}
\[
\Delta_{M}s_{\btheta}=d^{*}(is_{\btheta}\pi^{*}\eta_{\btheta})=i\langle ds_{\btheta},\pi^{*}\eta_{\btheta} \rangle+is_{\btheta}d^{*}(\pi^{*}\eta_{\btheta})=is_{\btheta}d^{*}(\pi^{*}\eta_{\btheta})+s_{\btheta}|\pi^{*}\eta_{\btheta}|^{2}.
\]
Hence, 
\begin{equation}\label{eq:conjugacy-operators}
    \Delta_{M}F_{\btheta}=\pi^{*}(\Delta_{M_{0}}f-2i\langle \eta_{\btheta},df\rangle +i(d^{*}\eta_{\btheta})f+|\eta_{\btheta}|^{2}f)s_{\btheta}.
\end{equation}
In other words, the induced operator $\Delta
_M \colon C^\infty_{\btheta}(M) \to C^\infty_{\btheta}(M)$ is conjugate \emph{via} \eqref{equation:iso} to the operator
\[ 
\Delta_{\btheta} \colon C^{\infty}(M_{0}) \to C^{\infty}(M_{0}), \, \,\Delta_{\btheta}f=\Delta_{M_{0}}f-2i\langle \eta_{\btheta},df \rangle+id^{*}\eta_{\btheta}f+|\eta_{\btheta}|^{2}f=(d+i\eta_{\btheta})^{*}(d+i\eta_{\btheta})f,
\]
which is a \emph{magnetic Laplacian} for each $\btheta$.
We also record a consequence of \eqref{eq:conjugacy-operators} on the corresponding propagators for future use, which follows from functional calculus:
\begin{equation}\label{eq:conjugacy-propagators}
    e^{-t\Delta_M} F_{\btheta} = \pi^*(e^{-t \Delta_{\btheta}} f) s_{\btheta},\quad t \geq 0.
\end{equation}

Making a different choice of $s_{\btheta}$ amounts to considering $s_{\btheta}' = \pi^*q \cdot s_{\btheta}$ for some $q \in C^\infty(M_0)$ of pointwise unit norm. The corresponding $1$-form then satisfies $i\eta_{\btheta}' = i \eta_{\btheta} + \frac{dq}{q}$, while the operator $\Delta_{\btheta}'$ becomes
\begin{equation}\label{eq:conjugated}
    \Delta_{\btheta}' = q^{-1}\Delta_{\btheta}q,
\end{equation}
i.e. $\Delta_{\btheta}'$ is conjugate to $\Delta_{\btheta}$ by the multiplication operator by the smooth function $q$. 

Since $\eta_{\btheta}$ is real, then $\Delta_{\btheta}$ is self-adjoint, nonnegative, and has a discrete spectrum $0\leq \lambda_{0}(\btheta) \leq \lambda_{1}(\btheta) \leq \dots$. The resolvent
\[
R(\btheta,z):=(\Delta_{\btheta}-z)^{-1}=\int_{0}^{\infty} e^{tz}e^{-t\Delta_{\btheta}}~ \dd t,
\]
is therefore well-defined, bounded and holomorphic on $L^{2}(M_{0})$ for $\Re(z) < 0$, since we have $\|e^{-t\Delta_{\btheta}}\|_{L^{2} \to L^{2}} \leq 1$. Equivalently, $R(\btheta,z)$ can be seen as an operator acting on $C^\infty(M_0,L_{\btheta})$ using the correspondence from Proposition \ref{prop:fourier-theory}. For simplicity, we will always consider these operators as acting on functions on $M_0$ rather than on sections of the line bundle $L_{\btheta} \to M_0$.

\subsubsection{Leading eigenvalue}\label{ssec:leading-resonance}

Since $\btheta \mapsto \eta_{\btheta}$ is smooth, it follows that $\btheta \mapsto \Delta_{\btheta}$ is smooth too. Then, $\btheta \mapsto R(\btheta,z)$ is smooth for $z$ outside the spectrum. It follows that the projector 
\begin{equation}\label{eq:projector-near-the-origin}
    \Pi_{0}(\btheta) = -\frac{1}{2\pi i}\oint_{\gamma_{0}} R(\btheta,z) \dd z,
\end{equation}
depends smoothly on $\btheta$, where $\gamma_{0}$ is a loop enclosing $\lambda_{0}(\btheta)$ only in the spectrum of $\Delta_{\btheta}$. Since $\lambda_{0}(\mathbf{0})=0$ is simple, $\Pi_{0}(\btheta)$ has rank one for $\btheta$ small. It follows that $\btheta \mapsto \lambda_{0}(\btheta)=\Tr(\Delta_{\btheta}\Pi_{0}(\btheta))$ is smooth.

Note also that we can write $\Pi_{0}(\btheta)= \langle \bullet, \varphi_{0}^{\btheta}\rangle \varphi_{0}^{\btheta}$, where $\varphi_{0}^{\btheta}$ is the eigenvector corresponding to $\lambda_{0}(\btheta)$, and $\|\varphi_{0}^{\btheta}\|_{L^{2}}= 1$. 

We now compute the first and second order derivatives of the leading eigenvalue at $\btheta = \mathbf{0}$:

\begin{lemma} \label{lemma:resonance-non-degenerate}
The following holds, for any $v \in T_{\mathbf{0}} \mathrm{U}(1)^d \simeq \mathbb{R}^d$
\[
    \lambda'_{0}(\mathbf{0})(v) = 0, \qquad \lambda''_{0}(\mathbf{0})(v, v) =2\|D_{v}\eta_{\mathbf{0}}\|_{L^{2}(M_{0})}^{2}.
\]
Moreover, we have $\lambda''_{0}(\mathbf{0})$ is positive-definite.    
\end{lemma}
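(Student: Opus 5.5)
We work with the smooth harmonic family $\btheta \mapsto \eta_{\btheta}$ from Proposition~\ref{prop:eta-theta}(iii) on a contractible neighbourhood $U$ of $\mathbf{0}$, normalised so that $\eta_{\mathbf{0}}=0$. Write $D_v\eta_{\mathbf{0}} := \partial_s|_{s=0}\eta_{sv}$. By linearity this equals $\widetilde\eta_v$, a harmonic representative of $[\widetilde\eta_v]$. At $\btheta=\mathbf{0}$ we have $\Delta_{\mathbf{0}}=\Delta_{M_0}$, with simple eigenvalue $\lambda_0(\mathbf{0})=0$. Since $\mathrm{vol}(M_0)=1$, the normalised eigenfunction is $\varphi_0^{\mathbf{0}}=1$. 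For small $\btheta$ we use the smooth rank-one projector \eqref{eq:projector-near-the-origin} and the normalised eigenvector $\varphi_0^{\btheta}$, which can be chosen smooth in $\btheta$ after fixing a phase. We then run Rayleigh--Schrödinger perturbation theory along the line $\btheta = sv$.

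\textbf{First derivative.} Set $\eta_s:=\eta_{sv}$, so that $\Delta_s = \Delta_{M_0} - 2i\langle\eta_s, d\cdot\rangle + i(d^*\eta_s) + |\eta_s|^2$. Harmonicity gives $d^*\eta_s=0$, hence $\dot\Delta_0 f = -2i\langle \dot\eta, df\rangle$ and $\ddot\Delta_0 f = -2i\langle\ddot\eta,df\rangle + 2|\dot\eta|^2 f$, where $\dot\eta=D_v\eta_{\mathbf{0}}$. Applying the Feynman--Hellmann formula,
\[
\lambda_0'(\mathbf{0})(v)=\langle \dot\Delta_0 1,1\rangle = 0,
\]
because $d1=0$. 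Alternatively, $\lambda_0(\btheta)\ge 0$ has a minimum at $\mathbf{0}$, which also forces the first derivative to vanish.

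\textbf{Second derivative.} The standard formula is
\[
\lambda_0''=\langle \ddot\Delta_0 1,1\rangle - 2\langle (\Delta_{M_0})^{-1}\Pi^\perp \dot\Delta_0 1, \dot\Delta_0 1\rangle .
\]
Since $\dot\Delta_0 1=0$, the second term vanishes. The first term gives $\langle \ddot\Delta_0 1,1\rangle = 2\int_{M_0}|\dot\eta|^2 = 2\|D_v\eta_{\mathbf{0}}\|^2_{L^2}$. To avoid invoking the general formula, I would instead differentiate the identity $\lambda_0(s)=\langle\Delta_s\varphi_s,\varphi_s\rangle$ twice. The eigen-equation gives $\dot\varphi_0 \in \ker\Delta_{M_0} + \Delta_{M_0}^{-1}(-\dot\Delta_0 1)$, so $\dot\varphi_0$ is constant. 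Hence $\langle\dot\Delta_0\dot\varphi_0,1\rangle=0$ and $\Delta_{M_0}\dot\varphi_0=0$, and the only surviving term is $\langle\ddot\Delta_0 1,1\rangle$.

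\textbf{Positivity.} We must show $D_v\eta_{\mathbf{0}}\neq 0$ for $v\ne0$. By Proposition~\ref{prop:eta-theta}(i), $D_v\eta_{\mathbf{0}}$ is closed with cohomology class $[\widetilde\eta_v]$, and for every loop $\gamma$ we have $\int_\gamma \widetilde\eta_v = \rho(\gamma)\cdot v$. Because $\rho$ is surjective onto $\Z^d$, we can choose $\gamma$ with $\rho(\gamma)\cdot v\ne 0$. Then the class is nonzero, so the form is not identically zero, and its $L^2$ norm is positive.

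This argument yields positive definiteness of the quadratic form $v\mapsto\lambda_0''(\mathbf{0})(v,v)$. The main technical care lies in two places. First, the smooth choice of $\varphi_0^{\btheta}$. Second, checking that the derivative $D_v\eta$ of the chosen representatives is genuinely harmonic, so that the $d^*\eta$ terms drop. If harmonicity were unavailable, a gauge change \eqref{eq:conjugated} would leave $\lambda_0$ invariant, and we could reduce to the harmonic case anyway.
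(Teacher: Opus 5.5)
Your proof is correct and follows essentially the paper's route: you perturb around the simple eigenvalue $0$ with eigenfunction $1$, use harmonicity of $\eta_{\btheta}$ to drop the $d^*$ terms, show that the first-order eigenfunction correction is constant so that only $2\int_{M_0}|D_v\eta_{\mathbf{0}}|^2$ survives, and deduce positivity from the surjectivity of $\rho$. The differences are cosmetic. You differentiate $\lambda_0=\langle\Delta_{\btheta}\varphi_0^{\btheta},\varphi_0^{\btheta}\rangle$ (Feynman--Hellmann), whereas the paper differentiates the eigenvalue equation and integrates over $M_0$, which is the same pairing with $\varphi_0^{\mathbf{0}}=1$. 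Also, you never need the paper's extra observation that the second $\btheta$-derivative of $\eta_{\btheta}$ vanishes, since that term is already killed by $d1=0$.
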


Here $\lambda'_{0}(\mathbf{0})(v) = D_v \lambda_{0}(\mathbf{0})$ denotes the directional derivative at $\btheta = \mathbf{0}$ in the direction $v$, and $\lambda''_{0}(\mathbf{0})(v, v) = D^2_v \lambda_{0}(\mathbf{0})$ denotes the Hessian also at $\btheta = \mathbf{0}$ in the direction of $(v, v)$.

\begin{proof}
Let $u_{\btheta}:=\varphi_{0}^{\btheta}$. Since $\vol(M_{0})=1$, we may choose $u_{\mathbf{0}}=1$. We begin with the equality
\[
(\Delta_{\btheta}-\lambda_{0}(\btheta))u_{\btheta}=\Delta_{M_{0}}u_{\btheta}-2i\langle\eta_{\btheta} ,du_{\btheta}\rangle+id^{*}\eta_{\btheta}u_{\btheta}+|\eta_{\btheta}|^{2}u_{\btheta}-\lambda_{0}(\btheta)u_{\btheta}=0.
\]
We will take the directional derivatives $D_v$ in the direction of $v$ and denote the corresponding derivatives at $\btheta$ using the dot notation; note that all objects vary smoothly with respect to $\btheta$. Hence, differentiating we find:
\begin{equation} \label{eq:diff1}
    (-2i\langle\dot{\eta}_{\btheta},d\bullet \rangle+id^{*}\dot{\eta}_{\btheta}+2\langle \dot{\eta}_{\btheta},\eta_{\btheta}\rangle -\dot{\lambda}_{0}(\btheta) )u_{\btheta}  + (\Delta_{M_{0}}-2i\langle\eta_{\btheta},d\bullet\rangle+id^{*}\eta_{\btheta}+|\eta_{\btheta}|^{2}-\lambda_{0}(\btheta))\dot{u}_{\btheta}=0,
\end{equation}
Using that $u_{\mathbf{0}}=1$, $\lambda_{0}(\mathbf{0})=0$, and that $\eta_{\mathbf{0}}=0$, we obtain that 
\begin{equation} \label{eq:dotu0}
    -id^{*}\dot{\eta}_{\mathbf{0}}u_{\mathbf{0}}+\dot{\lambda}_{0}(\mathbf{0})=\Delta_{M_{0}}\dot{u}_{\mathbf{0}}.
\end{equation}
Integrating over $M_{0}$ and using integration by parts, we obtain $\dot{\lambda}_{0}(\mathbf{0})=0$.

Now we differentiate \eqref{eq:diff1} with respect to $\btheta$ to obtain:
\begin{equation} \label{eq:diff2}
\begin{aligned}
    (-2i \langle\ddot{\eta}_{\btheta},d\bullet \rangle+id^{*}\ddot{\eta}_{\btheta}+2\langle \ddot{\eta}_{\btheta},\eta_{\btheta}\rangle +2|\dot{\eta}_{\btheta}|^{2}-\ddot{\lambda}_{0}(\btheta) )&u_{\btheta}\\
    +2(-2i\langle\dot{\eta}_{\btheta},d\bullet\rangle+id^{*}\dot{\eta}_{\btheta}+2\langle \dot{\eta}_{\btheta},\eta_{\btheta}\rangle -\dot{\lambda}_{0}(\btheta) )&\dot{u}_{\btheta} 
    \\
    +(\Delta_{M_{0}}-2i\langle\eta_{\btheta},d\bullet\rangle+id^{*}\eta_{\btheta}+|\eta_{\btheta}|^{2}-\lambda_{0}(\btheta))&\ddot{u}_{\btheta}=0.
\end{aligned}    
\end{equation}
Recall from Proposition \ref{prop:eta-theta} (3), $\eta_{\btheta}$ is harmonic, implying that $\ddot{\eta}_{\btheta}$ is harmonic too. In addition, since $\eta_{\btheta}$ satisfies \eqref{equation:eta-theta}, differentiating twice we conclude $\int_{\gamma}\ddot{\eta}_{\btheta}=0$, implying that (its cohomology class is zero and hence) $\ddot{\eta}_{\btheta}=0$. Using this, we see from \eqref{eq:dotu0} that $\dot{u}_{\mathbf{0}}$ is constant. Hence, taking $\btheta=0$ in \eqref{eq:diff2} we get:
\[ 
\ddot{\lambda}_{0}(\mathbf{0})=2|\dot{\eta}_{\mathbf{0}}|^{2}+\Delta_{M_{0}}\ddot{u}_{\mathbf{0}}.
\]
Integration over $M_{0}$ gives $\ddot{\lambda}_{0}(\mathbf{0})=2\| \dot{\eta}_{\mathbf{0}}\|_{L^{2}(M_{0})}^{2} \geq 0$. 

Finally, we show that the second derivative is positive definite. If for $v \neq 0$ we obtain that the derivative is zero, then $\dot{\eta}_{\mathbf{0}}=0$ and $\rho(\gamma) \cdot v=0$ for any $\gamma \in \pi_{1}(M_{0})$. Surjectivity of the representation implies $v=0$, finishing the proof.    
\end{proof}

\subsection{Proof of Theorem \ref{theorem:main1}} 

We begin by obtaining uniform bounds in $\btheta$ for the decay of correlations of the operators $\Delta_{\btheta}$.

\begin{lemma} \label{lemma:Delta_theta}
Let $U$ be a neighborhood of $\mathbf{0} \in \mathrm{U}(1)^d$ such that the family of eigenvalues $(\lambda_{0}(\btheta))_{\btheta \in U}$ near $z = 0$ exists and depends smoothly on $\btheta$, see \S \ref{ssec:leading-resonance}. Then, there exists $C > 0$ such that for all $f,g \in C^{\infty}(M_{0})$ and for all $t \geq 0$ we have:
\begin{enumerate}[label=(\roman*), itemsep=5pt]
    \item If $\btheta \in U$, we have
\[
\begin{split}
\left|\int_{M_{0}} e^{-t\Delta_{\btheta}}f \cdot \overline{g}~ \dd \vol_{M_0} - e^{-t\lambda_{0}(\btheta)}\langle \Pi_{0}(\btheta)f, g \rangle_{L^{2}(M_{0})}\right|\leq e^{-Ct} \|f\|_{L^{2}(M_{0})}\|g\|_{L^{2}({M_{0}})}.
\end{split}
\]
\item If $\btheta \in \mathrm{U}(1)^d \setminus U$, we have
\[
\begin{split}
\left|\int_{M_{0}} e^{-t\Delta_{\btheta}}f \cdot \overline{g}~ \dd \vol_{M_{0}}\right|\leq e^{-Ct}\|f\|_{L^{2}({M_{0}})}\|g\|_{L^{2}({M_{0}})}.
\end{split}
\]
\end{enumerate}
\end{lemma}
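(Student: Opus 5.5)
The plan is to use that each $\Delta_{\btheta}=(d+i\eta_{\btheta})^{*}(d+i\eta_{\btheta})$ is a nonnegative self-adjoint elliptic operator with discrete spectrum, so its heat semigroup is diagonal in an orthonormal eigenbasis. Both parts then reduce to a uniform lower bound on the relevant part of the spectrum. Writing $Q_{\btheta}$ for the spectral projector onto the complement of $\lambda_0(\btheta)$ (for $\btheta\in U$) or $Q_{\btheta}=\mathrm{Id}$ (for $\btheta\notin U$), the spectral theorem gives
\[
\bigl\|e^{-t\Delta_{\btheta}}Q_{\btheta}\bigr\|_{L^{2}\to L^{2}}\leq e^{-t\mu(\btheta)},
\]
where $\mu(\btheta)$ is the bottom of the spectrum of $\Delta_{\btheta}$ on the range of $Q_{\btheta}$. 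In case (i), since $\Pi_0(\btheta)$ is the orthogonal rank-one projector onto the $\lambda_0(\btheta)$-eigenline, we have $e^{-t\Delta_{\btheta}}=e^{-t\lambda_0(\btheta)}\Pi_0(\btheta)+e^{-t\Delta_{\btheta}}(1-\Pi_0(\btheta))$. Hence the quantity in (i) is bounded by $e^{-t\lambda_1(\btheta)}\|f\|\|g\|$, and (ii) is bounded by $e^{-t\lambda_0(\btheta)}\|f\|\|g\|$. It then suffices to show that $\inf_{\btheta\in U}\lambda_1(\btheta)>0$ and $\inf_{\btheta\notin U}\lambda_0(\btheta)>0$, and to take $C$ to be the smaller of these two infima. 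Shrinking $U$ to a compact neighbourhood inside the original one is harmless, and I will assume this.

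For the gap in (i), I would first use that $\lambda_1(\mathbf{0})>0$. Indeed, $\Delta_{\mathbf{0}}=\Delta_{M_0}$ on connected $M_0$ has simple kernel consisting of the constants. The eigenvalues $\lambda_j(\btheta)$ depend continuously on $\btheta$ by min-max, since $\Delta_{\btheta}-\Delta_{\mathbf{0}}$ is a first-order operator with coefficients depending smoothly on $\btheta$, and is therefore relatively form-bounded with small constant. Equivalently, the contour projector $\Pi_0(\btheta)$ from \eqref{eq:projector-near-the-origin} is rank one with $\gamma_0$ enclosing only $\lambda_0(\btheta)$, so the rest of the spectrum lies outside a fixed disc. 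This gives $\lambda_1(\btheta)\geq c>0$ on (the shrunken) $U$.

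The main point is (ii): I must show $\lambda_0(\btheta)>0$ for every $\btheta\neq\mathbf{0}$, and then conclude by compactness. Suppose $\Delta_{\btheta}u=0$ with $u\neq0$. Then $\|(d+i\eta_{\btheta})u\|^2=0$, so $u$ is parallel for the flat unitary connection on $L_{\btheta}$. In particular $|u|$ is constant and nonzero, and parallel transport of $u$ around any loop $\gamma$ is trivial. The holonomy of $L_{\btheta}$ is $e^{-i\btheta\cdot\rho(\gamma)}$, so this forces $\btheta\cdot\rho(\gamma)\in2\pi\Z$ for all $\gamma$. Surjectivity of $\rho$ then gives $\btheta\cdot\mathbf{n}\in 2\pi\Z$ for all $\mathbf{n}\in\Z^d$, that is, $\btheta=\mathbf{0}$ in $\mathrm{U}(1)^d$.

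Continuity of $\btheta\mapsto\lambda_0(\btheta)$ on all of $\mathrm{U}(1)^d$ is the only delicate point, because $\eta_{\btheta}$ is only locally smooth in $\btheta$. I would handle it by covering the compact set $\mathrm{U}(1)^d\setminus U$ by finitely many good open sets. On each of them Proposition \ref{prop:eta-theta}(iii) gives a smooth choice of $\eta_{\btheta}$, and by \eqref{eq:conjugated} a different choice only conjugates $\Delta_{\btheta}$ by a unitary multiplication operator, which leaves the spectrum unchanged. So $\lambda_0$ is a well-defined continuous positive function on the compact set $\mathrm{U}(1)^d\setminus U$, and its minimum is positive. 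Finally, I would take $C$ to be the smaller of this minimum and the constant $c$ from (i).
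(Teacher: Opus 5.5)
Your proof is correct and follows the paper's argument. For (i) you bound the quantity by $e^{-t\lambda_1(\btheta)}\|f\|\|g\|$ using the spectral theorem together with a uniform gap near $\mathbf{0}$. For (ii) you combine the fact that $\lambda_0(\btheta)=0$ only at $\btheta=\mathbf{0}$ (the paper's Lemma \ref{lemma:lambda-theta}, which you reprove via the holonomy of $L_{\btheta}$) with continuity and compactness; your use of the unitary conjugation \eqref{eq:conjugated} to make $\lambda_0$ a well-defined continuous function on all of $\mathrm{U}(1)^d$ spells out a step the paper leaves implicit.
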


Before giving the proof, we show a short technical lemma.

\begin{lemma}\label{lemma:lambda-theta}
$\lambda_{0}(\btheta)=0$ if and only if $\btheta=0$.    
\end{lemma}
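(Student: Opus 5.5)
The plan is to read the statement on the twisted side. Under the isomorphism \eqref{equation:iso}, a function $u$ with $\Delta_{\btheta}u=0$ is the same thing as a $\btheta$-equivariant function on $M$, or a section of the flat line bundle $L_{\btheta}$, that is parallel. Parallel sections of a flat line bundle over a connected base exist only when the holonomy is trivial. By Proposition~\ref{prop:eta-theta} and the surjectivity of $\rho$, that happens exactly when $\btheta=\mathbf{0}$.

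For the direction $\btheta=\mathbf{0}\Rightarrow\lambda_0=0$: since $\eta_{\mathbf{0}}$ may be taken to vanish, $\Delta_{\mathbf{0}}=\Delta_{M_0}$. This operator annihilates constants, so $\lambda_0(\mathbf{0})=0$. If a different $s_{\btheta}$ is chosen, \eqref{eq:conjugated} shows the new operator is conjugate to the old one, so the spectrum is unchanged.

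For the converse, suppose $\lambda_0(\btheta)=0$ and let $u\in C^\infty(M_0)$ be a nonzero eigenfunction; it is smooth by elliptic regularity. Integrate by parts using the factorisation $\Delta_{\btheta}=(d+i\eta_{\btheta})^*(d+i\eta_{\btheta})$:
\[
0=\langle \Delta_{\btheta}u,u\rangle_{L^2(M_0)}=\|(d+i\eta_{\btheta})u\|^2_{L^2(M_0)} .
\]
Hence $du=-i\eta_{\btheta}u$.
\begin{itemize}
\item Since $\eta_{\btheta}$ is real, $d|u|^2=2\Re(\bar u\,du)=0$. So $|u|$ is a nonzero constant by connectedness, and $u$ vanishes nowhere.
\item Set $F:=\pi^*u\cdot s_{\btheta}\in C^\infty_{\btheta}(M)$. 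Using $s_{\btheta}^{-1}ds_{\btheta}=i\pi^*\eta_{\btheta}$, we get $dF=(\pi^*du+i\pi^*\eta_{\btheta}\,\pi^*u)\,s_{\btheta}=0$.
\item Because $M$ is connected ($\rho$ is surjective), $F$ is a nonzero constant.
\item Equivariance gives $F=F\circ\tau_{\n}=e^{i\btheta\cdot\n}F$ for all $\n\in\Z^d$. Therefore $\btheta\cdot\n\in2\pi\Z$ for every $\n$, and taking the standard basis vectors shows $\btheta=\mathbf{0}$ in $\mathrm{U}(1)^d$.
\end{itemize}
An equivalent way to phrase the last two steps: integrate $du/u=-i\eta_{\btheta}$ around loops, which forces $\int_\gamma\eta_{\btheta}\in2\pi\Z$ for all loops. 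By \eqref{equation:eta-theta} this means $\rho(\gamma)\cdot\btheta\in 2\pi\Z$ for all $\gamma$, and surjectivity of $\rho$ again gives $\btheta=\mathbf{0}$.

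The argument has one delicate point. Proposition~\ref{prop:eta-theta}(iii) supplies a smooth representative $\eta_{\btheta}$ only over a contractible open set $U$, so the global nature of $s_{\btheta}$ for a fixed arbitrary $\btheta$ needs a comment. The fix is that for a single fixed $\btheta$ one may take any closed representative of $[\eta_{\btheta}]$, which yields a well-defined unit-norm $s_{\btheta}$ as in (ii). By \eqref{eq:conjugated}, different choices only conjugate $\Delta_{\btheta}$, so the vanishing of $\lambda_0(\btheta)$ does not depend on the choice.
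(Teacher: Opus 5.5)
Your proposal is correct and follows essentially the paper's proof: integrate by parts to get $(d+i\eta_{\btheta})u=0$, note that $|u|$ is constant, and conclude from trivial holonomy together with surjectivity of $\rho$ that $\btheta=\mathbf{0}$. Your loop-integral phrasing is exactly the paper's argument; your main route, lifting to the parallel equivariant function $F=\pi^*u\cdot s_{\btheta}$ on the connected cover $M$, is only a repackaging of it, and your remark that \eqref{eq:conjugated} makes the choice of representative $\eta_{\btheta}$ irrelevant is care the paper leaves implicit.
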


\begin{proof}
It is clear that if $\btheta=0$ then $\lambda_{0}(\btheta)=0$ since $\Delta_{\btheta}$ becomes the usual Laplace--Beltrami operator in that case. So, let us assume that $\lambda_{0}(\btheta)=0$. This implies the existence of $u \neq 0$ so that $(d+i \eta_{\btheta})u=0$. Using this and the fact that $\eta_{\btheta}$ is real, we obtain $d|u|^{2}=0$. Hence, $|u|$ is a positive constant. Define $v=u/|u| \in C^{\infty}(M_{0},S^{1})$. Then, $-dv/(iv)=\eta_{\btheta}$. This implies $\rho(\gamma) \cdot \btheta \in 2 \pi \Z$ for any $\gamma \in \pi_{1}(M_{0})$. The surjectivity of $\rho$ shows that $\btheta=\mathbf{0} \in \mathrm{U}(1)^d$.    
\end{proof}

\begin{proof}[Proof of Lemma \ref{lemma:Delta_theta}]
We first prove $(i)$ and argue for nonzero $\btheta \in U$. The claim for $\btheta = \mathbf{0}$ then follows by continuity. Recall that $\Delta_{\btheta}$ is self-adjoint on $L^{2}(M_{0})$. Furthermore, for each $\btheta \in U$, let $\{\varphi_{j}^{\btheta}\}_{j=0}^{\infty}$ be an orthonormal basis of $L^{2}(M_{0})$ given by eigenvectors of $\Delta_{\btheta}$, associated to the eigenvalues $0 \leq \lambda_{0}(\btheta) \leq \lambda_{1}(\btheta) \leq \dots$ Then
\[ 
|\langle e^{-t\Delta_{\btheta}}f,g \rangle-e^{-t\lambda_{0}(\btheta)}\langle \Pi_{0}(\btheta)f,g\rangle|=\left|\sum_{j \geq 1}e^{-t\lambda_{j}(\btheta)} \langle f,\varphi_{j}^{\btheta}\rangle \langle \varphi_{j}^{\btheta},g \rangle \right| \leq e^{-t\lambda_{1}(\btheta)}\|f\|_{L^{2}}\|g\|_{L^{2}}.
\]
Recall that $\lambda_{0}(\mathbf{0})=0$ is simple and $\lambda_{1}(\mathbf{0})>0$. Then, shrinking $U$ if necessary, we obtain the existence of a constant such that $0<C<\lambda_{1}(\btheta)$ for any $\btheta \in U$. This shows the first result. 

For the proof of (ii), by continuity and Lemma \ref{lemma:lambda-theta}, there exists $C>0$ with $\lambda_{0}(\btheta) \geq C$ for any $\btheta \in \mathrm{U}(1)^{d} \setminus U$. Then, it follows that
\[  
|\langle e^{-t\Delta_{\btheta}}f,g \rangle| \leq \|e^{-t\Delta_{\btheta}}f\|_{L^{2}} \|g\|_{L^{2}} \leq e^{-Ct}\|f\|_{L^{2}}\|g\|_{L^{2}}.
\]
\end{proof}

The last step before the proof of Theorem \ref{theorem:main1} will be to establish a bound on the spectral projector.

\begin{lemma} \label{lemma:regularity_Pi_theta}
Let $U \subset \mathrm{U}(1)^d$ be a good open neighborhood of the origin $\mathbf{0} \in \mathrm{U}(1)^d$ such that $(\lambda_{0}(\btheta))_{\btheta \in U}$ and $\Pi_{0}(\btheta)$ are well-defined, see \S \ref{ssec:leading-resonance}. Let $k \geq 0$ be an integer, and $s \geq 0$. Then, for all multi-indices $\alpha$ such that $|\alpha| \leq k$, there exists $C=C(s,k,U) > 0$ such that for all $f,g \in B^{s,k}(M)$
\[ \sup_{\btheta \in U}|\partial_{\btheta}^{\alpha} \langle \Pi_{0}(\btheta)f_{\btheta},g_{\btheta}\rangle_{L^{2}(M_{0})} | \leq C\|f\|_{B^{s, k}(M)}\|g\|_{B^{s, k}(M)}. \]
\end{lemma}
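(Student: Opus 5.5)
The plan is to expand the $\btheta$-derivative with the Leibniz rule and bound each resulting term separately. One factor will be the smooth family of operators $\Pi_0(\btheta)$; the other factors will be the Floquet components $f_{\btheta}$ and $g_{\btheta}$. Since $\langle\cdot,\cdot\rangle$ is sesquilinear in real parameters, the product rule applies directly:
\[
\partial_{\btheta}^{\alpha}\langle \Pi_{0}(\btheta)f_{\btheta},g_{\btheta}\rangle=\sum_{\beta+\gamma+\delta=\alpha}c_{\beta\gamma\delta}\,\langle (\partial^{\beta}_{\btheta}\Pi_{0}(\btheta))\,\partial^{\gamma}_{\btheta}f_{\btheta},\partial^{\delta}_{\btheta}g_{\btheta}\rangle_{L^{2}(M_{0})},
\]
with combinatorial constants $c_{\beta\gamma\delta}$.

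Each term is then bounded by
\[
\sup_{\btheta \in U}\|\partial^{\beta}_{\btheta}\Pi_{0}(\btheta)\|_{L^{2}\to L^{2}}\cdot\|f_{\bullet}\|_{C^{k}(U,L^{2})}\,\|g_{\bullet}\|_{C^{k}(U,L^{2})}.
\]
By Lemma \ref{lemma:spaces_theta_sobolev}, applied with $s=0$ together with the monotonicity $\|\cdot\|_{B^{0,k}}\le\|\cdot\|_{B^{s,k}}$, the last two factors are bounded by $C\|f\|_{B^{s,k}}\|g\|_{B^{s,k}}$. The order in which the $\btheta$-derivatives are distributed is irrelevant, because every $\gamma,\delta$ satisfies $|\gamma|,|\delta|\le k$.

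The remaining step, and the only one requiring care, is the uniform bound on $\partial^\beta_{\btheta}\Pi_0(\btheta)$ in $\mathcal L(L^2(M_0))$ over $U$. I would use the contour formula \eqref{eq:projector-near-the-origin}. After shrinking $U$ so that $\overline U$ lies in the region where the eigenvalue is isolated, fix one contour $\gamma_0$ that works for all $\btheta\in\overline U$: for instance a small circle around $0$ that separates $\lambda_0(\btheta)$ from the rest of the spectrum, which stays above $\lambda_1(\btheta)>C$ as in the proof of Lemma \ref{lemma:Delta_theta}. Then $\partial^\beta_{\btheta}\Pi_0=-\frac{1}{2\pi i}\oint_{\gamma_0}\partial^\beta_{\btheta}R(\btheta,z)\,\dd z$, and repeated use of $\partial R=-R(\partial\Delta_{\btheta})R$ expresses $\partial^\beta_{\btheta}R$ as a sum of products of resolvents and derivatives $\partial^{\beta'}_{\btheta}\Delta_{\btheta}$.

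These derivatives are first-order differential operators of the form $-2i\langle\partial^{\beta'}\eta_{\btheta},d\bullet\rangle+(\text{smooth multiplication})$, with coefficients smooth in $\btheta\in\overline U$ (here Proposition \ref{prop:eta-theta}(iii) is used). Each such operator is sandwiched between resolvents, and $R(\btheta,z)\colon L^2\to H^2$ is bounded uniformly for $z\in\gamma_0$ and $\btheta\in\overline U$ by ellipticity and compactness. Hence each product is bounded on $L^2$ uniformly, which gives the required bound on $\partial^\beta_{\btheta}\Pi_0(\btheta)$.

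Alternatively, $\Pi_0(\btheta)=\langle\cdot,\varphi_0^{\btheta}\rangle\varphi_0^{\btheta}$ has rank one and is smooth into trace-class operators, so one could instead use smoothness of $\btheta\mapsto\varphi_0^{\btheta}\in L^2$. Both routes need the same compactness argument, and none of the steps seems to present a real obstacle.
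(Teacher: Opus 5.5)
Your proposal is correct and follows essentially the same route as the paper: the contour formula for $\Pi_0(\btheta)$, expansion of $\partial^\beta_{\btheta}R(\btheta,z)$ into products of resolvents and first-order operators $\partial^{\beta'}_{\btheta}\Delta_{\btheta}$ (controlled by a uniform resolvent bound plus elliptic regularity), then the Leibniz rule and Lemma~\ref{lemma:spaces_theta_sobolev}. The only difference is that you prove uniform $L^2\to L^2$ bounds on $\partial^\beta_{\btheta}\Pi_0(\btheta)$ and apply the lemma with $s=0$, whereas the paper proves $H^s\to H^s$ bounds and uses $C^k(U,H^s)$ norms; since only an $L^2$ pairing is estimated, your version suffices and is slightly more economical.
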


\begin{proof}
In the first place, thanks to \eqref{eq:projector-near-the-origin} observe that for any $j = 1, \dotsc, d$, we have
\[
\begin{split}
    \partial_{\theta_j}\Pi_{0}(\btheta)&=\frac{1}{2\pi i}\oint_{\gamma_{0}} R(\btheta,z) (\partial_{\theta_{j}}\Delta_{\btheta})R(\btheta,z) \dd z \\
    &=\frac{1}{2\pi i}\oint_{\gamma_{0}} R(\btheta,z)( -2i\langle\partial_{\theta_{j}}\eta_{\btheta},d\bullet \rangle+id^{*}\partial_{\theta_{j}}\eta_{\btheta}+2\langle \partial_{\theta_{j}}\eta_{\btheta},\eta_{\btheta}\rangle )R(\btheta,z) \dd z,
\end{split}
\]
and inductively we see that derivatives of the spectral projector involve derivatives of $-2i\langle\eta_{\btheta},d\bullet\rangle+id^{*}\eta_{\btheta}+|\eta_{\btheta}|^{2}$  and powers of the resolvent $R(\btheta,z)$ for $z \in \gamma_{0}$. Since $\gamma_{0}$ is disjoint from $\mathrm{spec}(\Delta_{\btheta})$ for $\btheta\in U$, there exists $\delta>0$ such that for $z \in \gamma_{0}$ and $\btheta \in U$ we have $\mathrm{dist}(z,\mathrm{spec}(\Delta_{\btheta}))\geq \delta$. Therefore, by the spectral theorem,
\begin{equation} \label{eq:spectral-bound-resolvent}
    \|R(\btheta,z)\|_{L^{2}\to L^{2}}\leq \delta^{-1}.
\end{equation}
Now let $v\in H^{s}(M_{0})$, and set $u=R(\btheta,z)v$. Then elliptic regularity together with \eqref{eq:spectral-bound-resolvent} give
\[
\|u\|_{H^{s+2}} \leq C_{s} ( \|v\|_{H^{s}} +\|u\|_{L^{2}} )
\leq C_{s}\|v\|_{H^{s}},
\]
i.e., $\|R(\btheta,z)\|_{H^{s} \to H^{s+2}} \leq C_{s}$. Hence, $H^{s+2} \hookrightarrow H^{s}$ gives $\|R(\btheta,z)\|_{H^{s} \to H^{s}} \leq C_{s}$. On the other hand, $\partial_{\theta_{j}}\Delta_{\btheta} \colon H^{s+1} \to H^{s}$ since it is an operator of order 1. Then, by induction, we see that $\|\partial^\alpha_{\btheta} \Pi_{0}(\btheta)\|_{H^{s} \to H^{s}} \leq C$ for $|\alpha| \leq k$ and $\btheta \in U$. Thus, 
\[
\begin{split}
    |\partial_{\btheta}^{\alpha} \langle \Pi_{0}(\btheta)f_{\btheta},g_{\btheta}\rangle_{L^{2}(M_{0})} | &\leq \sum_{\alpha_{1}+\alpha_{2}+\alpha_{3}=\alpha}C_{\alpha_{1},\alpha_{2},\alpha_{3}} |\langle \partial_{\btheta}^{\alpha_{1}}\Pi_{0}(\btheta) \partial_{\btheta}^{\alpha_{2}}f_{\btheta},\partial_{\btheta}^{\alpha_{3}}g_{\btheta}\rangle_{L^{2}(M_{0})}|  \\
    & \leq C\sum_{\alpha_{1}+\alpha_{2}+\alpha_{3}=\alpha}C_{\alpha_{1},\alpha_{2},\alpha_{3}} \| \partial_{\btheta}^{\alpha_{2}}f_{\btheta} \|_{H^{s}(M_{0})} \|\partial_{\btheta}^{\alpha_{3}}g_{\btheta}\|_{H^{s}(M_{0})} \\
    & \leq C\|f_{\bullet}\|_{C^{k}(U,H^{s}(M_{0}))} \|g_{\bullet}\|_{C^{k}(U,H^{s}(M_{0}))},
\end{split}
\]
where $C_{\alpha_1, \alpha_2, \alpha_3}$ denotes a multinomial coefficient. The result follows from Lemma \ref{lemma:spaces_theta_sobolev}.    
\end{proof}

\begin{proof}[Proof of Theorem \ref{theorem:main1}]
We are going to obtain an expansion for $\langle e^{-t\Delta_{M}}f,g\rangle$, then the result will follow from duality.

Let $U_0$ be a good neighborhood of $\mathbf{0} \in \mathrm{U}(1)^d$ such that the conclusion of Lemma \ref{lemma:Delta_theta}, Item (i) holds. Consider a finite covering of $\mathrm{U}(1)^d$ by good geodesically convex balls $(U_i)_{i = 0}^k$ and a partition of unity subordinate to this cover $(\chi_i)_{i = 0}^k$ with $\chi_0 = 1$ near the origin. Over each $U_i$ we may trivialize the line bundle $(L_{\btheta})_{\btheta \in U_i}$ smoothly by Proposition \ref{prop:eta-theta} (3), i.e., there is a smooth family of sections of $L_{\btheta}$ of pointwise unit norm $(s_{\btheta, i})_{\btheta \in U_i}$. We may use the isomorphism \eqref{equation:iso} for $\btheta \in U_i$ identifying sections of $L_{\btheta}$ with functions on $M_0$; the identification goes as $F_{\btheta} = s_{\btheta, i} \pi^*f_{\btheta, i}$ for a section $F_{\btheta}$ of $L_{\btheta}$. With this identification, $(e^{-t\Delta_M} f)_{\btheta, i} = e^{-t\Delta_{\btheta}} f_{\btheta, i}$ for each $i$, thanks to \eqref{eq:conjugacy-propagators} and Proposition \ref{prop:fourier-theory}. We therefore conclude from Parseval's decomposition \eqref{equation:parseval} that
\[
\begin{split}
    &\langle e^{-t\Delta_M}f,g \rangle_{L^{2}(M)} = \frac{1}{(2\pi)^{d}} \int_{\mathrm{U}(1)^d} \langle{e^{-t\Delta_{\btheta}}F_{\btheta}, G_{\btheta}}\rangle_{L^2(M_0, L_{\btheta})} \dd \btheta\\
    =& \frac{1}{(2\pi)^{d}}\int_{U_0} \chi_0(\btheta) e^{-t \lambda_{0}(\btheta)} \langle{\Pi_{0}(\btheta)f_{\btheta, 0}, g_{\btheta, 0}}\rangle_{L^2} \dd \btheta+\frac{1}{(2\pi)^{d}}\sum_{i = 1}^k \int_{U_i} \chi_i(\btheta) \langle e^{-t\Delta_{\btheta}}f_{\btheta, i}, g_{\btheta, i} \rangle_{L^{2}} \dd \btheta \\
    &+ \frac{1}{(2\pi)^{d}}\int_{U_0}\chi_0(\btheta) (\langle e^{-t\Delta_{\btheta}}f_{\btheta, 0}, g_{\btheta, 0} \rangle_{L^{2}} - e^{-t \lambda_{0}(\btheta)} \langle{\Pi_{0}(\btheta)f_{\btheta, 0}, g_{\btheta, 0}}\rangle_{L^2}) \dd \btheta.
\end{split}
\]
Call the first term $Q(t)$, and the rest $S(t)$. We begin by estimating $S(t)$. In light of Lemma \ref{lemma:Delta_theta}, Items (i) and (ii), there is $C > 0$ such that (note that technically we take the maximum of the constants that appear for each $U_i$, where $i = 0, \dotsc, k$)
\[ 
|S(t)| \leq e^{-Ct} \sum_{i = 0}^k \int_{\mathrm{U}(1)^d} \chi_i(\btheta) \|f_{\btheta, i}\|_{L^{2}} \|g_{\btheta, i}\|_{L^{2}} \dd \btheta \leq Ce^{-Ct}\|f\|_{L^{2}(M)} \|g\|_{L^{2}(M)}.
\]

We now derive an asymptotic expansion for $Q(t)$. For simplicity we drop the sub-index $0$ in what follows. Since $\lambda_{0}(\btheta)$ has a nondegenerate critical point at $\btheta = \mathbf{0}$ according to Lemma \ref{lemma:resonance-non-degenerate}, and since $\Re(\lambda_{0}(\btheta)) \geq 0$, we may apply the stationary phase lemma (see \cite[Lemma 7.7.5]{Hoermander-I-03}) with $i\lambda_{0}$, and so we have, for each integer $N \geq 1$:
\begin{equation}
\label{equation:tard}
\begin{split}
    t^{d/2} Q(t) &= \frac{1}{\sqrt{(2\pi)^{d}\det \lambda''_{0}(\mathbf{0})}} \sum_{j=0}^{N-1} t^{-j} L_j (\langle \Pi_{0}(\btheta)f_{\btheta},g_{\btheta} \rangle_{L^{2}(M_{0})})|_{\btheta = \mathbf{0}} + R_{N}(t),
\end{split}
\end{equation}
where $L_j$ is an elliptic differential operator of order $2j$ for $j \geq 1$ and $L_0 = \mathrm{Id}$. Then
\begin{equation}
\label{equation:expression-cj}
    C_j(f, g) := \frac{(2\pi)^{-d/2}}{\sqrt{\det (2\langle \partial_{\theta_{i}}\eta_{\btheta}|_{\btheta=\mathbf{0}},\partial_{\theta_{j}}\eta_{\btheta}|_{\btheta=\mathbf{0}} \rangle_{L^{2}(M_{0})})}} L_j \langle{\Pi_{0}(\btheta) f_{\btheta}, g_{\btheta}}\rangle|_{\btheta = \mathbf{0}} = \mc{O}(\|f\|_{B^{s, 2j}} \|g\|_{B^{s, 2j}}),
\end{equation}
where we used Lemma \ref{lemma:regularity_Pi_theta}. The remainder term is estimated using Lemma \ref{lemma:regularity_Pi_theta} once again:
\[ 
    |R_{N}(t)| \leq C t^{-N} \sum_{|\alpha| \leq 2N+d+1}  \sup_{\btheta \in U_0} |\partial_{\btheta}^{\alpha} \langle \Pi_{0}(\btheta)f_{\btheta},g_{\btheta}\rangle_{L^{2}} | \leq Ct^{-N}\|f\|_{B^{s,2N+d+1}(M)}\|g\|_{B^{s,2N+d+1}(M)}.
\]
Finally, one can apply the Schwartz kernel theorem to obtain the existence of $\mathcal{C}_{j}, \mathcal{R}_{N} \in \mathcal{D}'(M \times M)$ satisfying
\[  
\langle \mathcal{C}_{j}, f \otimes g \rangle=C_{j}(f,g), \quad \langle \mathcal{R}_{N}(t), f \otimes g \rangle=S(t,f,g)+R_{N}(t,f,g),
\]
finishing the proof.
\end{proof}

\subsection{Computation of \texorpdfstring{$C_1$}{C1}.} As in \cite[\S4.5]{Cekic-Lefeuvre-Munoz-Thon-26}, we now compute the first coefficient of the expansion. Continuing the discussion from the proof of Theorem \ref{theorem:main1}, and using its notation, according to \cite[Lemma 7.7.5]{Hoermander-I-03} for any $j$ we have 
\begin{equation}\label{eq:stat-phase-lemma}
    L_ju = \sum_{\nu - \mu = j} \sum_{2\nu \geq 3\mu \geq 0} i^{-j} 2^{-\nu} (\mu! \nu!)^{-1} \langle{(i\lambda''_{0}(\mathbf{0}))^{-1}D, D}\rangle^{\nu} (h^\mu u)(\mathbf{0}),\quad u \in C^\infty(U_0),
\end{equation}
is a differential operator of order $2j$ acting on $u$ at $\btheta = \mathbf{0}$, where $h$ vanishes to third order at $\mathbf{0}$ and is defined by 
\begin{equation} \label{eq:g-lambda}
    -ih(\btheta) = \lambda_{0}(\btheta) - \frac{1}{2} \langle{\lambda_{0}''(\mathbf{0}) \btheta, \btheta}\rangle.
\end{equation}
Here $D = -i \partial_{\btheta}$ denotes the gradient operator times $-i$. We have $L_0 = \mathrm{Id}$ is the identity operator, and from \eqref{eq:stat-phase-lemma} we read that
\begin{align}\label{eq:L1}
\begin{split}
    i L_1 u =& 2^{-1} \langle{(i\lambda_{0}''(\mathbf{0}))^{-1}D, D}\rangle u (\mathbf{0}) + 2^{-2} \frac{1}{2!} \langle{(i\lambda_{0}''(\mathbf{0}))^{-1}D, D}\rangle^2 (h u) (\mathbf{0})\\ 
    &+  2^{-3} \frac{1}{2! 3!}\langle{(i\lambda_{0}''(\mathbf{0}))^{-1}D, D}\rangle^3 (h^2) u(\mathbf{0}),\quad u \in C^\infty_{\comp}(U_0),
\end{split}
\end{align}

Write $u(\btheta) := \langle{\Pi_{0}(\btheta) f_{\btheta}, g_{\btheta}}\rangle_{L^2}$, and let us compute its derivatives as follows. Note that we can write, near $z=\lambda_{0}(\btheta)$
\[
R(\btheta,z)=R_{\hol}(\btheta,z)-\frac{\Pi_{0}(\btheta)}{z-\lambda_{0}(\btheta)},
\]
where $R_{\hol}$ is the holomorphic part. Using this together with \eqref{eq:projector-near-the-origin}, and computing as in the proof of Lemma \ref{lemma:regularity_Pi_theta}
\begin{align*}
    \partial_{\btheta_j} \Pi_{0}(\btheta) =& \frac{1}{2\pi i}\oint_{\gamma_0} \left(R_{\hol}(\btheta,z) - \frac{\Pi_{0}(\btheta)}{z - \lambda_{0}(\btheta)}\right) \partial_{\theta_{j}}\Delta_{\btheta} \left(R_{\hol}(\btheta,z) - \frac{\Pi_{0}(\btheta)}{z - \lambda_{0}(\btheta)}\right) \dd z\\
    =& -\Pi_{0}(\btheta) ( -2i\langle\partial_{\theta_{j}}\eta_{\btheta},d\bullet\rangle+id^{*}\partial_{\theta_{j}}\eta_{\btheta}+2\langle \partial_{\theta_{j}}\eta_{\btheta},\eta_{\btheta}\rangle ) R_{\hol}(\btheta,\lambda_{0}(\btheta)) \\
    &- R_{\hol}(\btheta,\lambda_{0}(\btheta)) ( -2i\langle\partial_{\theta_{j}}\eta_{\btheta},d\bullet\rangle+id^{*}\partial_{\theta_{j}}\eta_{\btheta}+2\langle \partial_{\theta_{j}}\eta_{\btheta},\eta_{\btheta}\rangle ) \Pi_{0}(\btheta),
\end{align*}
We therefore have for an index $j \in \{1, \dotsc, d\}$ that
\begin{align*}
    \partial_{\btheta_j} u =& -\langle(\Pi_{0}(\btheta) ( -2i\langle\partial_{\theta_{j}}\eta_{\btheta},d\bullet\rangle+id^{*}\partial_{\theta_{j}}\eta_{\btheta}+2\langle \partial_{\theta_{j}}\eta_{\btheta},\eta_{\btheta}\rangle ) R_{\hol}(\btheta,\lambda_{0}(\btheta))f_{\btheta},g_{\btheta}\rangle
    \\
    &- \langle R_{\hol}(\btheta,\lambda_{0}(\btheta)) ( -2i\langle\partial_{\theta_{j}}\eta_{\btheta},d\bullet\rangle+id^{*}\partial_{\theta_{j}}\eta_{\btheta}+2\langle \partial_{\theta_{j}}\eta_{\btheta},\eta_{\btheta}\rangle ) \Pi_{0}(\btheta) f_{\btheta}, g_{\btheta}\rangle \\ 
    &+ \langle{\Pi_{0}(\btheta) \partial_{\btheta_j}f_{\btheta}, g_{\btheta}}\rangle + \langle{\Pi_{0}(\btheta) f_{\btheta}, \partial_{\btheta_j} g_{\btheta}}\rangle\\ 
    =& \langle{\Pi_{0}(\btheta)P_{j} f_{\btheta}, g_{\btheta}}\rangle + \langle{\Pi_{0}(\btheta) f_{\btheta}, P_{j}g_{\btheta} }\rangle=\langle{\Pi_{0}(\btheta)P_{j} f_{\btheta}, g_{\btheta}}\rangle + \langle{f_{\btheta}, \Pi_{0}(\btheta)P_{j}g_{\btheta} }\rangle,
\end{align*}
where $P_{j}=\partial_{\btheta_j} - (-2i\langle\partial_{\theta_{j}}\eta_{\btheta},d\bullet\rangle+id^{*}\partial_{\theta_{j}}\eta_{\btheta}+2\langle \partial_{\theta_{j}}\eta_{\btheta},\eta_{\btheta}\rangle )R_{\hol}(\btheta,\lambda_{0}(\btheta))$, and we used that $\Delta_{\btheta}$ is self-adjoint and therefore $\partial_{\theta_{j}}\Delta_{\btheta}$, $\Pi_{0}(\btheta)$, and $R_{\hol}(\btheta,\lambda_{0}(\btheta))$ are self-adjoint too. Iterating this, we compute for $k \in \{1, \dotsc, d\}$ that 
\begin{align*}
    \partial_{\btheta_k \btheta_j}u = \langle{\Pi_{0}(\btheta)P_{k} P_{j} f_{\btheta}, g_{\btheta}}\rangle + \langle{\Pi_{0}(\btheta) P_{j} f_{\btheta}, P_{k} g_{\btheta}}\rangle + \langle{ P_{k} f_{\btheta},\Pi_{0}(\btheta) P_{j} g_{\btheta} }\rangle + \langle{f_{\btheta}, \Pi_{0}(\btheta) P_{k} P_{j} g_{\btheta} }\rangle.
\end{align*}
For simplicity, from now on we assume that 
\[
    \int_{M} f ~\dd \vol_M = \int_{M_0} f_\mathbf{0} ~\dd \vol_{M_0} = \int_{M} g ~\dd \vol_{M} = \int_{M_0} g_\mathbf{0} ~\dd \vol_{M_0} = 0.
\]
In particular $u(\mathbf{0}) = 0$, and from the expression for $\partial_{\btheta_j} u$ we have $\partial_{\btheta_j} u(\mathbf{0}) = 0$. This also gives that $\Pi_{0}(\mathbf{0})f_{\mathbf{0}}=\Pi_{0}(\mathbf{0})g_{\mathbf{0}}=0$. Also, using that $h$ vanishes to third order at $\mathbf{0}$ (which follows from its definition in \eqref{eq:g-lambda}), from \eqref{eq:L1} we therefore see that the last two terms vanish, and so 
\begin{align*}
    2L_1 u &= \sum_{j, k} \lambda_{0}''(\mathbf{0})^{-1}_{jk} \partial_{\btheta_k} \partial_{\btheta_j} u (\mathbf{0})\\
    &= \sum_{j, k} \lambda_{0}''(\mathbf{0})^{-1}_{jk} \left(\langle{\Pi_{0}(\btheta) P_{j} f_{\btheta}, P_{k} g_{\btheta}}\rangle|_{\btheta = \mathbf{0}} + \langle{\Pi_{0}(\btheta) P_{k} f_{\btheta}, P_{j} g_{\btheta} }\rangle|_{\btheta = \mathbf{0}}\right)\\
    &= 2\sum_{j, k} \lambda_{0}''(\mathbf{0})^{-1}_{jk} \Pi_{0}(\mathbf{0}) P_{j} f_{\btheta}|_{\btheta = \mathbf{0}} \overline{\Pi_{0}(\mathbf{0}) P_{k}g_{\btheta}|_{\btheta = \mathbf{0}}},
\end{align*}
where in the third equality we used that $\lambda''_{\mathbf{0}}$ is symmetric, and that $\vol_{M_0}$ is assumed to be a probability measure. We are left to compute $\Pi_{\mathbf{0}} P_j f_{\btheta}|_{\btheta = \mathbf{0}}$ and the symmetric term acting on $g_{\btheta}$.

Write $\partial_{\theta_{j}}\eta_{\btheta}|_{\btheta=\mathbf{0}}=\alpha_{j}$. Then, using that $\eta_{\btheta}$ is harmonic and $\eta_{\mathbf{0}}=0$ 
\begin{align*}
    \Pi_{0}(\mathbf{0}) P_j f_{\btheta}|_{\btheta = \mathbf{0}} &= \int_{M_0}\left(\partial_{\theta_j} f_{\btheta}|_{\btheta = \mathbf{0}} - 2i\langle\alpha_{j},dR_{\hol}(\mathbf{0},0) f_{\mathbf{0}}\rangle\right) \dd \vol_{M_0}\\
    &= -i \int_{M_0} (fH_j)_{\mathbf{0}} ~\dd \vol_{M_0} - 2i\int_{M_0}  (d^{*}\alpha_{j})R_{\hol}(\mathbf{0},0)f_{\mathbf{0}} ~\dd \vol_{M_0}\\
    &= -i\int_M fH_j  ~\dd \vol_{M} \\
    &=-i\int_{M}fd^{-1}(\pi^{*}\alpha_{j}) ~\dd \vol_{M},
\end{align*}
where in the second line we used Proposition \ref{prop:differentiation-btheta} as well as $\pi^* f_{\mathbf{0}}(x) = \sum_{\n} f(\tau_{\n}\widehat{x})$ for an arbitrary $\widehat{x} \in \pi^{-1}(x)$ and integration by parts, in the third we used the fact that $d^{*}\alpha_{j}=0$, and in the fourth one we used the definition of $H_{j}$ ($dH_{j}=\pi^{*}\alpha_{j}$, $H_{j}(\hat{x}_{0})=0$, see \eqref{eq:auxiliary-Hj}).

We now prove directly that this term is nonzero. Indeed, it is zero for all $f_{\mathrm{comp}}^{\infty}(M)$ that integrates to zero, then $H_{j}$ is constant. Then, $0=\pi^{*}\alpha_{j}$. Since $\pi \colon M \to M_{0}$ is a local diffeomorphism, we get that $\alpha_{j}=0$. However, this implies that $\rho \equiv 0$, contradicting the surjectivity of the representation.

Hence,
\begin{equation}\label{equation:c1}
\begin{split}
    C_1(f, g) = \frac{(2\pi)^{-\frac{d}{2}}}{\sqrt{\det \lambda_{0}''(\mathbf{0})}}\left(\sum_{j, k}\lambda''_{0}(\mathbf{0})^{-1}_{jk} \right. &\int_M f d^{-1} (\pi^* \partial_{\theta_{j}}\eta_{\btheta}|_{\btheta=\mathbf{0}}) \dd \vol_M\\ 
    & \left. \cdot\int_M \overline{g} d^{-1} (\pi^* \partial_{\theta_k} \eta_{\btheta}|_{\btheta=\mathbf{0}}) \dd \vol_M \right),
\end{split}
\end{equation}
whenever $\int_M f \dd \vol_M = \int_M g \dd \vol_M = 0$, where $\lambda_{0}''(\mathbf{0})=2\langle \partial_{\theta_{j}}\eta_{\btheta}|_{\btheta=\mathbf{0}},\partial_{\theta_{k}}\eta_{\btheta}|_{\btheta=\mathbf{0}} \rangle_{L^{2}(M_{0})}$. Finally, note that \eqref{equation:c1} is nontrivial. This follows from the fact that $\{\mathbf{1}_{M},H_{1},\ldots,H_{d}\}$ are linearly independent. To show the independence, let us assume by contradiction that this is not the case. Hence, we can write $c_{0}+\sum_{j}c_{j}H_{j}=0$. After differentiation, we get $\pi^{*}(\sum_{j}c_{j}\alpha_{j})=0$. It follows from the fact that $\pi$ is a local diffeomorphism that $\sum_{j}c_{j}\alpha_{j}=0$. Then, $c \cdot \rho(\gamma)=0$ for any $\gamma$, where $c=(c_{1},\ldots,c_{d})$. Surjectivity of $\rho$ implies that $c=0$, giving $c_{0}=0$ too. This shows the independence and finishes our computations.

\begin{remark}
The pairing $C_2$ can be computed similarly, assuming $\Pi_{0}(\mathbf{0}) P_j f_{\btheta}|_{\btheta = \mathbf{0}}=0$ for all indices $1 \leq j \leq d$ and that $f$ has zero average. The computation of $C_\ell$ for $\ell \geq 3$ is less clear, but a natural inductive assumption would be that $P_\alpha f_{\btheta}|_{\btheta = \mathbf{0}} = 0$ for all $|\alpha| \leq \ell - 1$.    
\end{remark}

\subsection{Nonvanishing of the bilinear forms} \label{subsection:non-vanishing}

Finally, we prove that the bilinear forms $C_j$ defined in \eqref{equation:expression-cj} do not vanish. The proof follows the same structure as \cite[\S4.6]{Cekic-Lefeuvre-Munoz-Thon-26}:

\begin{lemma}
    \label{lemma:cj-non-zero}
    For all $j \geq 1$, the bilinear form $C_j \colon C^\infty_{\comp}(M) \times  C^\infty_{\comp}(M) \to \C$ is nonzero.
\end{lemma}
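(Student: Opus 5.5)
Fix $j \geq 1$. The aim is to choose test functions $f,g \in C^\infty_{\comp}(M)$ for which the expression \eqref{equation:expression-cj} reduces to a single, visibly nonzero term. Write $u(\btheta) := \langle \Pi_0(\btheta) f_{\btheta}, g_{\btheta}\rangle_{L^2(M_0)}$. By \eqref{equation:expression-cj}, $C_j(f,g)$ is a nonzero multiple of $L_j u$, and by \eqref{eq:stat-phase-lemma} $L_j$ is a differential operator of order $2j$ at $\btheta=\mathbf{0}$. Its terms have the form $\langle (i\lambda_0''(\mathbf{0}))^{-1}D,D\rangle^{\nu}(h^{\mu}u)(\mathbf{0})$ with $\nu-\mu=j$. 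Since $h$ vanishes to third order, $h^\mu u$ vanishes to order $3\mu + \mathrm{ord}_{\mathbf{0}}(u)$. The plan is therefore to pick $f,g$ so that $u$ vanishes to order exactly $2j$ at $\mathbf{0}$. Then only the term with $\mu=0$, $\nu=j$ survives, because for $\mu\geq1$ we have $3\mu+2j > 2\nu = 2j+2\mu$. This leaves
\[
L_j u = i^{-j}2^{-j}(j!)^{-1}\langle (i\lambda_0''(\mathbf{0}))^{-1}D,D\rangle^{j}u(\mathbf{0}),
\]
which depends only on the degree-$2j$ Taylor coefficient of $u$.

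To arrange this, I would work with the Fourier transforms of $f$ and $g$ directly. Near $\mathbf{0}$ we have $\Pi_0(\btheta)=\langle\bullet,\varphi_0^{\btheta}\rangle\varphi_0^{\btheta}$, so $u(\btheta)=a(\btheta)\overline{b(\btheta)}$ with $a(\btheta)=\langle f_{\btheta},\varphi_0^{\btheta}\rangle$ and $b$ defined analogously from $g$. I want $a$ and $b$ each to vanish to order exactly $j$, with leading homogeneous parts $a_j(\btheta)$ and $b_j(\btheta)$. Then $u$ has leading part $a_j\overline{b_j}$ of degree $2j$. Taking $g=f$ gives $u=|a|^2$, with leading part $|a_j|^2$, a nonnegative homogeneous polynomial that is not identically zero. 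The operator $\langle (\lambda_0''(\mathbf{0}))^{-1}\partial,\partial\rangle^j$ applied to $|a_j(\btheta)|^2$ is a positive multiple of an integral of $|a_j|^2$ against a Gaussian. This follows from the Fourier/Gaussian pairing identity: for a positive definite form $A$ and a homogeneous polynomial $p$ of degree $2j$, the quantity $\langle A^{-1}\partial,\partial\rangle^j p$ is a positive constant times $\int p(\btheta)e^{-\langle A\btheta,\btheta\rangle/2}\,\dd\btheta$. This uses Lemma \ref{lemma:resonance-non-degenerate} for positive-definiteness of $\lambda_0''(\mathbf{0})$. The integral is strictly positive, so $C_j(f,f)\neq 0$.

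It remains to construct $f$ so that $a$ vanishes to order exactly $j$. Fix a real $\psi\in C^\infty_{\comp}(M)$ with $\int_M \psi\,\dd\vol_M\neq 0$ and a vector $v\in\Z^d\setminus\{0\}$. Set $f := \sum_{\n} c_{\n}\,\psi\circ\tau_{-\n}$, a finite combination of translates. By \eqref{eq:theta-frequency}, this gives $f_{\btheta}=\widehat{c}(\btheta)\psi_{\btheta}$ with $\widehat{c}(\btheta)=\sum_{\n} c_{\n}e^{i\n\cdot\btheta}$, so $a(\btheta)=\widehat{c}(\btheta)\langle\psi_{\btheta},\varphi_0^{\btheta}\rangle$. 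The second factor equals $\int_{M}\psi\neq0$ at $\btheta=\mathbf{0}$, using $\varphi_0^{\mathbf{0}}=1$. Now choose the coefficients so that $\widehat{c}(\btheta)=(e^{iv\cdot\btheta}-1)^j$, which is a finite difference operator. This vanishes to order exactly $j$ with leading part $(iv\cdot\btheta)^j$. Hence $a_j(\btheta)=(iv\cdot\btheta)^j\int_M\psi$, which is nonzero.

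The main obstacle is the bookkeeping in the first step. It requires confirming that the trivialization choices $s_{\btheta}$ (which affect $f_{\btheta}$ and $\varphi_0^{\btheta}$ only by the common unit factor, so $u$ is invariant) do not spoil the order-of-vanishing computation, and that the constant in front of $L_j$ in \eqref{eq:stat-phase-lemma} combines with $i^{-j}$ and $(i\lambda_0'')^{-j}$ into a nonzero real multiple. Both points look routine: the powers of $i$ give $i^{-j}i^{-j}(-1)^j=1$, since $D=-i\partial$.
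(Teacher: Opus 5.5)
Your proof is correct, and it reaches the conclusion by a somewhat different mechanism than the paper's proof.

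\textbf{The paper's route.} The paper takes arbitrary $f,g$ with $\int_{M_0} f_{\mathbf{0}} = \int_{M_0} g_{\mathbf{0}} = 1$ and replaces $f$ by a single far translate $\tau_{\bk}^*f$ with $\bk=(k,0,\dots,0)$. The relevant function becomes $e^{i\btheta\cdot\bk}c(\btheta)$, and the paper reads off $L_j(e^{i\btheta\cdot\bk}c)(\mathbf{0}) = k^{2j}\sigma_{L_j}(\mathbf{0},\mathbf{e}_1^*)+\mathcal{O}(k^{2j-1})$. Nonvanishing then follows from $\sigma_{L_j}(\mathbf{0},\xi)=c_j\langle\lambda_0''(\mathbf{0})^{-1}\xi,\xi\rangle^j$. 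This gives $C_j(\tau_{\bk}^*f,g)\neq 0$ only for $k$ large, with no control on the sign.

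\textbf{Your route.} You take the $j$-th finite difference of translates, $\widehat c(\btheta)=(e^{iv\cdot\btheta}-1)^j$, which forces $u=|a|^2$ to vanish to order exactly $2j$. The $\mu\geq 1$ terms in \eqref{eq:stat-phase-lemma} then vanish identically, rather than being lower order in $k$. As a result $C_j(f,f)$ is computed exactly, as a positive multiple of $\langle\lambda_0''(\mathbf{0})^{-1}v,v\rangle^j\,|\int_M\psi\,\dd\vol_M|^2$.

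\textbf{Comparison.} Both arguments rest on the same fact: the top-order part of $L_j$ is $2^{-j}(j!)^{-1}\langle\lambda_0''(\mathbf{0})^{-1}\partial,\partial\rangle^j$, which is nondegenerate by Lemma \ref{lemma:resonance-non-degenerate}. Your version avoids the asymptotic argument in $k$ and produces an explicit test function. It also yields the sign $C_j(f,f)>0$. This is what positivity of $\langle e^{-t\Delta_M}f,f\rangle$ predicts for an $f$ whose lower-order coefficients all vanish.

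\textbf{Minor points.}
\begin{itemize}
\item The general Gaussian pairing identity is more than you need. Since the leading part $(iv\cdot\btheta)^j\int_M\psi\,\dd\vol_M$ is explicit, you can compute directly $\langle A\partial,\partial\rangle^j(v\cdot\btheta)^{2j}=(2j)!\,\langle Av,v\rangle^j$.
\item There is a harmless sign slip. With the convention \eqref{eq:theta-frequency}, $\psi\circ\tau_{-\n}$ contributes $e^{-i\n\cdot\btheta}$, not $e^{i\n\cdot\btheta}$. Use $\psi\circ\tau_{\n}$ instead.
\end{itemize}
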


\begin{proof}
    The operator $L_j$ in the expression \eqref{equation:expression-cj} of $C_j(f,g)$ is an elliptic differential operator of order $2j$ applied to $\langle\Pi_{0}(\btheta)f_{\btheta},g_{\btheta}\rangle$ (and evaluated at $\btheta=\mathbf{0}$). From \S\ref{ssec:leading-resonance}, we have $\langle \Pi_{0}(\btheta)f_{\btheta},g_{\btheta}\rangle=\langle f_{\btheta},\varphi_{0}^{\btheta} \rangle \langle \varphi_{0}^{\btheta},g_{\btheta}\rangle$.

    We consider $f,g \in C^\infty_{\comp}(M)$ with $\int_{M_0} f_{\boldsymbol{0}} \dd\vol_{M_0} =1 = \int_{M_0} g_{\boldsymbol{0}} \dd\vol_{M_0}$ and let $c(\btheta) := \langle f_{\btheta},\varphi_{0}^{\btheta}\rangle \langle \varphi_{0}^{\btheta},g_{\btheta}\rangle$. Similarly as in \S\ref{ssec:leading-resonance}, this is a smooth function of $\btheta$ near $\btheta=\boldsymbol{0}$. As $\varphi_{0}^{\mathbf{0}}=\mathbf{1}_{M_0}$, we find that $c(\boldsymbol{0})=1$. Also note that for $\bk \in \Z^d$, $\tau_{\bk}^*f \in C^\infty_{\comp}(M)$ and $(\tau_{\bk}^*f)_{\btheta} = e^{i\btheta\cdot\bk}f_{\btheta}$. Therefore $\langle\Pi_{0}(\btheta)(\tau_{\bk}^*f)_{\btheta},g_{\btheta}\rangle = e^{i\btheta\cdot\bk} c(\btheta)$ near $\btheta = \mathbf{0}$.
    
    We then compute asymptotically $L_j \langle\Pi_{0}(\btheta)(\tau_{\bk}^*f)_{\btheta},g_{\btheta}\rangle|_{\btheta=\boldsymbol{0}}$ in the specific case where $\bk=(k,0, \dotsc,0) \in \Z^d$ and $k \to +\infty$. As $L_j$ is a differential operator of order $2j$, we find:
    \[
    \begin{split}
    L_j \langle\Pi_{0}(\btheta)(\tau_{\bk}^*f)_{\btheta},g_{\btheta}\rangle|_{\btheta=0}&  = L_j(e^{i\btheta\cdot\bk}c(\btheta))|_{\btheta=\boldsymbol{0}}  =k^{2j} \sigma_{L_j}(\boldsymbol{0}, \mathbf{e}_1^*) c(\boldsymbol{0}) + \mc{O}(k^{2j-1}) \\
    &= k^{2j} \sigma_{L_j}(\boldsymbol{0}, \mathbf{e}_1^*) + \mc{O}(k^{2j-1}),
    \end{split}
    \]
    where $\sigma_{L_j} \in C^\infty(T^*\mathrm{U}(1)^d)$ denotes the (homogeneous) principal symbol of $L_j$, evaluated at the point $\btheta=\boldsymbol{0} \in \mathrm{U}(1)^d$ and covector $\mathbf{e}_1^* \in T^*_{\boldsymbol{0}}\mathrm{U}(1)^d$ (such that $\mathbf{e}_1^*(\partial_{\theta_1})=1$ and $\mathbf{e}_1^*(\partial_{\theta_j})=0$ for $j \geq 2$). We can argue in two ways that $\sigma_{L_j}(\boldsymbol{0}, \mathbf{e}_1^*) \neq 0$. First, by ellipticity of $L_j$ the above expression is nonzero for $k \gg 1$ large enough. On the other hand, since $h$ vanishes to third order at $\btheta=\mathbf{0}$, from \eqref{eq:stat-phase-lemma} we see that for $\xi \in T^*_{\mathbf{0}} \mathrm{U}(1)^{d} \simeq \mathbb{R}^d$ there is $c_{j} \neq0 $ with
    \begin{equation}
        \label{equation:expression-symbol-cj}
        \sigma_{L_j}(\mathbf{0}, \xi) = c_{j} \langle{\lambda_{0}''(\mathbf{0})^{-1}\xi, \xi}\rangle^j .
    \end{equation}
\end{proof}

\subsection{Proof of Theorem \ref{theorem:main1-local}}

We will need the following version of Lemma \ref{lemma:Delta_theta}.

\begin{lemma} \label{lemma:Delta_theta_local}
Let $m>\dim M_{0}/2+\max\{\ell_{x},\ell_{y}\}$. Let $U$ be a sufficiently small neighborhood of $\mathbf{0} \in \mathrm{U}(1)^d$ such that $\lambda_0(\btheta)$ and $\Pi_{0}(\btheta)$ are defined smoothly for $\btheta \in U$. Then there exist constants $c, C>0$ such that for $t \geq 1$:
\begin{enumerate}[label=(\roman*), itemsep=5pt]
    \item If $\btheta \in U$ we have
    \[
    \|H_{\btheta}(t,x,y) -e^{-t \lambda_{0}(\btheta)} \Pi_{0}(\btheta,x,y)\|_{C^{\ell_{x},\ell_{y}}(M_{0} \times M_{0})} \leq C e^{-c t}
    \]
    \item If $\btheta \in \mathrm{U}(1)^{d}\setminus U$, we have
    \[
    \|H_{\btheta}(t,x,y)\|_{C^{\ell_{x},\ell_{y}}(M_{0} \times M_{0})} \leq Ce^{-ct}.
    \]
\end{enumerate}  
Here $\Pi_{0}(\btheta,p,q)=\varphi_{0}^{\btheta}(p) \overline{\varphi_{0}^{\btheta}(q)}$ is the Schwartz kernel of $\Pi_{0}(\btheta)$.
\end{lemma}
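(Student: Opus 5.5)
We upgrade the $L^2$ estimates of Lemma \ref{lemma:Delta_theta} to $C^{\ell_x,\ell_y}$ bounds on Schwartz kernels by a standard smoothing argument. Write the kernel of $e^{-t\Delta_{\btheta}}$ via the spectral decomposition,
\[
H_{\btheta}(t,x,y)=\sum_{j\geq 0}e^{-t\lambda_j(\btheta)}\varphi_j^{\btheta}(x)\overline{\varphi_j^{\btheta}(y)}.
\]
Then $H_{\btheta}(t)-e^{-t\lambda_0(\btheta)}\Pi_0(\btheta)$ is the kernel of $e^{-t\Delta_{\btheta}}(\mathrm{Id}-\Pi_0(\btheta))$ for $\btheta\in U$, and of $e^{-t\Delta_{\btheta}}$ itself in case (ii). For $t\geq 1$ we factor
\[
e^{-t\Delta_{\btheta}}(\mathrm{Id}-\Pi_0)=e^{-\frac12\Delta_{\btheta}}\,\bigl[e^{-(t-1)\Delta_{\btheta}}(\mathrm{Id}-\Pi_0)\bigr]\,e^{-\frac12\Delta_{\btheta}}.
\]
The middle factor has $L^2\to L^2$ norm at most $e^{-(t-1)c}$ by the spectral gap established in the proof of Lemma \ref{lemma:Delta_theta}: in case (i) we have $\lambda_1(\btheta)\geq c$ on $U$ after shrinking, and in case (ii) $\lambda_0(\btheta)\geq c$ off $U$ by Lemma \ref{lemma:lambda-theta} and compactness.

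The key step is a bound, uniform in $\btheta\in \mathrm{U}(1)^d$, on $\|e^{-\frac12\Delta_{\btheta}}\|_{L^2\to H^{m}}$ for $m>\dim M_0/2+\max\{\ell_x,\ell_y\}$. I would prove this first on each good ball $U_i$ of a finite cover, where $\eta_{\btheta}$ depends smoothly on $\btheta$ by Proposition \ref{prop:eta-theta}(iii). There $\Delta_{\btheta}$ is a smooth family of second-order elliptic operators with the same principal symbol as $\Delta_{M_0}$, so elliptic regularity gives, with constants uniform over the compact $\overline{U_i}$,
\[
\|u\|_{H^{2k}}\leq C_k\bigl(\|(\Delta_{\btheta}+1)^k u\|_{L^2}\bigr).
\]
Combined with the spectral theorem bound $\|(\Delta_{\btheta}+1)^k e^{-\frac12\Delta_{\btheta}}\|_{L^2\to L^2}\leq \sup_{\lambda\geq 0}(1+\lambda)^ke^{-\lambda/2}$, this yields the claim. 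The choice of trivialization is harmless: changing $s_{\btheta}$ conjugates $\Delta_{\btheta}$ by a unimodular smooth multiplier $q$ as in \eqref{eq:conjugated}, which only changes constants. Moreover, on $U$ we use the single trivialization with $\eta_{\mathbf 0}=0$.

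By self-adjointness (taking adjoints), $e^{-\frac12\Delta_{\btheta}}$ is also bounded $H^{-m}\to L^2$. Hence the composite operator is bounded $H^{-m}\to H^m$ with norm $\leq Ce^{-ct}$. An operator $A\colon H^{-m}(M_0)\to H^{m}(M_0)$ has kernel in $H^m\hat\otimes H^m\subset H^{m,m}(M_0\times M_0)$, the mixed-regularity Sobolev space. For instance, write $A=(\Delta+1)^{-m/2}B(\Delta+1)^{-m/2}$ with $B$ bounded on $L^2$, and note that for $m>\dim M_0/2$ the operator $(\Delta+1)^{-m/2}$ is Hilbert--Schmidt, so its kernel is $L^2$. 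Sobolev embedding with $m>\dim M_0/2+\ell$ in each variable then gives $\|K_A\|_{C^{\ell_x,\ell_y}}\lesssim \|A\|_{H^{-m}\to H^m}$.

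This proves (i) and (ii) with the constant $Ce^{c}e^{-ct}$. The main obstacle is the uniformity in $\btheta$ of the elliptic estimates over the whole torus, given that no global smooth trivialization exists. This is handled by the finite cover by good balls together with the conjugation invariance, and by the fact that the unit-norm multiplier $q$ and its derivatives are bounded on each overlap.
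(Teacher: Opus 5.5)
Your proposal is essentially the paper's proof. It uses the same factorization $e^{-\Delta_{\btheta}/2}\,[e^{-(t-1)\Delta_{\btheta}}(\mathrm{Id}-\Pi_0(\btheta))]\,e^{-\Delta_{\btheta}/2}$, the uniform $L^2\to H^m$ bound from elliptic regularity plus the spectral theorem, duality for $H^{-m}\to L^2$, and the spectral gap ($\lambda_1\geq c$ on $U$, $\lambda_0\geq c$ off $U$) on the middle factor. Your handling of uniformity in $\btheta$ over the whole torus, via finitely many good balls and the conjugation \eqref{eq:conjugated}, is more careful than the paper's one-line remark.

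One justification is wrong as written. A bounded operator $A\colon H^{-m}\to H^m$ need not have kernel in $H^m\widehat{\otimes}H^m$. That space corresponds exactly to Hilbert--Schmidt operators $H^{-m}\to H^m$, and $(\Delta_{M_0}+1)^{-m}$, which is an isomorphism $H^{-m}\to H^m$, is a counterexample. Your factorization $A=(\Delta+1)^{-m/2}B(\Delta+1)^{-m/2}$ only places the kernel in $L^2$, or more carefully in $H^{s}\widehat{\otimes}H^{s}$ for $s<m-\dim M_0/2$.

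The bound you need is nevertheless true with the stated $m$. Up to sign, $\partial_x^\beta\partial_y^\gamma K_A(x,y)$ equals the pairing $\langle A\,\partial^\gamma\delta_y,\partial^\beta\delta_x\rangle$. The distributions $\partial^\beta\delta_x$ and $\partial^\gamma\delta_y$ are bounded in $H^{-m}$ uniformly in $x,y$ once $m>\dim M_0/2+\max\{\ell_x,\ell_y\}$, so $\|K_A\|_{C^{\ell_x,\ell_y}}\lesssim\|A\|_{H^{-m}\to H^m}$. Alternatively, since $m$ does not enter the conclusion, you can enlarge it to $m>\dim M_0+\max\{\ell_x,\ell_y\}$ and then your Hilbert--Schmidt argument plus Sobolev embedding suffices. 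The paper itself passes over this step with ``together with the Sobolev embedding''.
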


\begin{proof}
Let us first consider the case when $\btheta \in U$. Just as in the proof of Lemma \ref{lemma:Delta_theta}(i), we have that there exists $c>0$ with
\begin{equation} \label{eq:L2-spectral-bound}
    \|e^{-t\Delta_{\btheta}}(\mathrm{Id}-\Pi_{0}(\btheta))\|_{L^{2} \to L^{2}}\leq e^{-ct}. 
\end{equation}
Also, since $\Delta_{\btheta}$ is elliptic, $\|\bullet\|_{H^{m}}$ and $\|(\mathrm{Id}+\Delta_{\btheta})^{m/2}\bullet\|_{L^{2}}$ are equivalent. Note that the constants are uniform in $\btheta$. Then, the spectral theorem gives
\[
\|e^{-\Delta_{\btheta}/2 }v\|_{H^{m}} \leq C_{m} \|(\mathrm{Id}+\Delta_{\btheta})^{m/2}e^{-\Delta_{\btheta}/2 }v\|_{L^{2}} \leq C_{m} \sup_{\lambda \geq 0} (1+\lambda)^{m/2}e^{-\lambda/2}\|v\|_{L^{2}},
\]
showing $\|e^{-\Delta_{\btheta}/2}\|_{L^{2} \to H^{m}} \leq C_{m}$. Since $\Delta_{\btheta}$ is self-adjoint, duality gives $\|e^{-\Delta_{\btheta}/2}\|_{H^{-m} \to L^{2}} \leq C_{m}$. These bounds together with the Sobolev embedding give that the term in Item (i) of the statement can be bounded by
\[
\begin{split}
    & \leq \|e^{-t\Delta_{\btheta}}(\mathrm{Id}-\Pi_{0}(\btheta))\|_{H^{-m} \to H^{m}} \\
    & \leq \|e^{-\Delta_{\btheta}/2}\|_{L^{2}\to H^{m}} \|e^{-(t-1)\Delta_{\btheta}}(\mathrm{Id}-\Pi_{0}(\btheta))\|_{L^{2}\to L^{2}}\|e^{-\Delta_{\btheta}/2}\|_{H^{-m}\to L^{2}} \\
    & \leq C_{m}e^{-c_{0}(t-1)} \leq C_{m}'e^{-c_{0}t}.
\end{split}
\]

Regarding $\btheta \in \mathrm{U}(1)^{d}\setminus U$, Lemma \ref{lemma:lambda-theta} shows that there exists $c>0$ with $\lambda_{0}(\btheta)>0$. Hence, the spectral theorem gives $\|e^{-t\Delta_{\btheta}}\|_{L^{2}} \leq e^{-ct}$. Thus,
\[
\begin{split}
    \|H_{\btheta}(t,p,q)\|_{C^{\ell_{x},\ell_{y}}(M_{0} \times M_{0})} & \leq \|e^{-t\Delta_{\btheta}}\|_{H^{-m}\to H^{m}} \\
    & \leq \|e^{-\Delta_{\btheta}/2}\|_{L^{2} \to H^{m}}\|e^{-(t-1)\Delta_{\btheta}}\|_{L^{2}\to L^{2}} \|e^{-\Delta_{\btheta}/2}\|_{H^{-m}\to L^{2}} \\
    & \leq C_{m}e^{-c_{1}(t-1)} \leq C_{m}'e^{-c_{1}t}.
\end{split}
\]
\end{proof}

Now we give a local version of Lemma \ref{lemma:regularity_Pi_theta}.

\begin{lemma} \label{ref:Pi-local}
Let $K_{x},K_{y} \subset M$ be compact, and let $\ell_{x},\ell_{y} \geq 0$. Take a contractible neighborhood $U \subset \mathrm{U}(1)^d$ of $\mathbf{0}$, choose a smooth family $\eta_{\btheta}$ on $U$ as in Proposition \ref{prop:eta-theta}, with $\eta_\mathbf{0}=0$, and let $s_{\btheta}$ be the corresponding smooth family of trivializations.

Define the amplitude
\begin{equation} \label{eq:amplitude}
    a(\btheta,x,y)=\chi_{0}(\btheta) s_{\btheta}(x) \overline{s_{\btheta}(y)} \varphi_{0}^{\btheta}(\pi(x)) \overline{\varphi_{0}^{\btheta}(\pi(y))}.
\end{equation}
where $\chi_0 \in C_c^{\infty}(U)$. Then, for every integer $k \geq 0$, there exist $C>0$ such that 
\[
\max_{|\alpha| \leq k}\sup _{\btheta \in U}\|\partial_{\btheta}^{\alpha} a(\btheta,\bullet,\bullet)\|_{C^{\ell_{x},\ell_{y}}(K_{x}\times K_{y})} \leq C.
\]
\end{lemma}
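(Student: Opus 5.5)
The amplitude $a(\btheta,x,y)$ in \eqref{eq:amplitude} is a product of four factors, each depending smoothly on $\btheta$: the cutoff $\chi_0(\btheta)$, the trivializations $s_{\btheta}(x)$ and $\overline{s_{\btheta}(y)}$, and the eigenfunction factors $\varphi_0^{\btheta}(\pi(x))$, $\overline{\varphi_0^{\btheta}(\pi(y))}$. The plan is to expand $\partial^\alpha_{\btheta}$ by the Leibniz rule and bound each factor separately. Each factor depends on only one spatial variable, so the $C^{\ell_x,\ell_y}(K_x\times K_y)$ norm of a product $u(\btheta,x)v(\btheta,y)$ is bounded by $\|u\|_{C^{\ell_x}(K_x)}\|v\|_{C^{\ell_y}(K_y)}$. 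It therefore suffices to bound, uniformly for $\btheta\in\operatorname{supp}\chi_0$ and $|\beta|\le k$, the four quantities
\[
\|\partial_{\btheta}^\beta s_{\btheta}\|_{C^{\ell_x}(K_x)}, \quad \|\partial_{\btheta}^\beta s_{\btheta}\|_{C^{\ell_y}(K_y)}, \quad \|\partial_{\btheta}^\beta(\varphi_0^{\btheta}\circ\pi)\|_{C^{\ell_x}(K_x)}, \quad \|\partial_{\btheta}^\beta(\varphi_0^{\btheta}\circ\pi)\|_{C^{\ell_y}(K_y)}.
\]
Since $\chi_0$ is compactly supported in $U$, the supremum over $U$ reduces to a supremum over a compact set.

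\emph{The trivializations.} By \eqref{eq:auxiliary-Hj} and induction, $\partial^\beta_{\btheta}s_{\btheta}=p_\beta(\btheta,x)\,s_{\btheta}$, where $p_\beta$ is a polynomial in the functions $iH_j$ together with their $\btheta$-derivatives. The $H_j(x)=\int_{\widehat x_0}^x\pi^*\partial_{\theta_j}\eta_{\btheta}$ are smooth in $(\btheta,x)$, because $\eta_{\btheta}$ depends smoothly on $\btheta$ in $C^\infty(M_0,T^*M_0)$ by Proposition \ref{prop:eta-theta}(iii). Directly, $s_{\btheta}(x)=\exp(i\int_{\widehat x_0}^x\pi^*\eta_{\btheta})$ is jointly smooth on $U\times M$, so the bound follows from continuity on the compact set $\operatorname{supp}\chi_0\times K$. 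Spatial derivatives are likewise harmless: $ds_{\btheta}=is_{\btheta}\pi^*\eta_{\btheta}$ and $|s_{\btheta}|=1$.

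\emph{The eigenfunction.} The key step is joint smoothness of $\btheta\mapsto\varphi_0^{\btheta}\in C^\infty(M_0)$. The spectral projector $\Pi_0(\btheta)$ is smooth in $\btheta$ with $\|\partial^\beta_{\btheta}\Pi_0(\btheta)\|_{H^{s}\to H^{s}}\le C$, by the argument of Lemma \ref{lemma:regularity_Pi_theta}. The issue is that the eigenfunction is only determined up to a phase, so a smooth choice must be fixed. Since $\Pi_0(\mathbf 0)\mathbf 1=\mathbf 1$, I would define
\[
\varphi_0^{\btheta}:=\frac{\Pi_0(\btheta)\mathbf 1}{\|\Pi_0(\btheta)\mathbf 1\|_{L^2}}
\]
after shrinking $U$ so that the denominator stays near $1$. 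This choice is consistent with the normalization in \S\ref{ssec:leading-resonance}. Better smoothing is available: on the range of $\Pi_0(\btheta)$ one has $\Pi_0(\btheta)=(1+\lambda_0(\btheta))^{m}(\mathrm{Id}+\Delta_{\btheta})^{-m}\Pi_0(\btheta)$, and elliptic regularity with uniform constants gives $\|\partial_{\btheta}^\beta\Pi_0(\btheta)\mathbf 1\|_{H^{s}(M_0)}\le C_{s,\beta}$ for all $s$. Taking $s>\dim M_0/2+\max(\ell_x,\ell_y)$, the Sobolev embedding gives uniform $C^{\max(\ell_x,\ell_y)}(M_0)$ bounds. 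Pulling back by the local isometry $\pi$ preserves $C^\ell$ norms on $K_x$ and $K_y$.

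The main obstacle is this last step: the uniform-in-$\btheta$ higher Sobolev bounds on $\partial^\beta\varphi_0^{\btheta}$, together with the smooth phase choice. Both are handled by the contour formula for $\Pi_0$, via the resolvent bounds $\|R(\btheta,z)\|_{H^s\to H^{s+2}}\le C_s$ uniformly for $z\in\gamma_0$ and $\btheta\in U$, exactly as in Lemma \ref{lemma:regularity_Pi_theta}. Combining the estimates via Leibniz then yields the claimed bound with constant depending on $k,\ell_x,\ell_y,K_x,K_y$.
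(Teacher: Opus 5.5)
Your proposal is correct and follows essentially the same route as the paper, whose proof simply invokes joint smoothness of $s_{\btheta}$ and $\varphi_0^{\btheta}$ in $(\btheta,x,y)$ together with compactness of $\operatorname{supp}\chi_0\times K_x\times K_y$; you additionally justify the smoothness of $\btheta\mapsto\varphi_0^{\btheta}$ in every $C^{\ell}(M_0)$ norm via the contour formula and uniform resolvent bounds, which the paper only asserts. (Strictly, no smooth phase choice, and hence no further shrinking of $U$, is needed: $\varphi_0^{\btheta}(p)\overline{\varphi_0^{\btheta}(q)}$ is the Schwartz kernel of $\Pi_0(\btheta)$, so it is canonical and can be bounded directly.)
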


\begin{proof}
Since $\btheta$ is small, $\lambda_{0}(\btheta)$ is a simple eigenvalue in $U$.
Furthermore, since $U$ is contractible, the eigenfunction $\varphi_{0}^{\btheta}$ is smooth in $\btheta$. Therefore $\Pi_{0}$ and hence $a$ depend smoothly on $\btheta$, $x$, and $y$. The claim now follows from the smooth dependence of $s_{\btheta}$, the compact support of $\chi_{0}$, and the compactness of $K_{x}\times K_{y}$.    
\end{proof}

\begin{proof}[Proof of Theorem \ref{theorem:main1-local}]
As in the proof of Theorem \ref{theorem:main1}, we take a partition of unity, and let $H_{\btheta,i}(t,p,q)$ be the heat kernel of $e^{-t\Delta_{\btheta}}$  in the trivialization $s_{\btheta,i}$. Then write $H_{M}(t,x,y)=S(t,x,y)+Q(t,x,y)$, where
\[
\begin{split}
    Q(t,x,y):=&\frac{1}{(2\pi)^{d}}\int_{U_{0}}\chi_{0}(\btheta)e^{-t\lambda_{0}(\btheta)}s_{\btheta,0}(x)\overline{s_{\btheta,0}(y)}\Pi_{0}(\btheta,\pi(x),\pi(y)) ~\dd\btheta, \\
    S(t,x,y):=&\frac{1}{(2\pi)^{d}} \left(\sum_{i=1}^{k}\int_{U_{i}}\chi_{i}(\btheta)s_{\btheta,i}(x) \overline{s_{\btheta,i}(y)}H_{\btheta,i}(t,\pi(x),\pi(y)) ~\dd \btheta \right. \\
    &\left.+\int_{U_{0}}\chi_{0}(\btheta)s_{\btheta,0}(x)\overline{s_{\btheta,0}(y)}(H_{\btheta,0}(t,\pi(x),\pi(y))-e^{-t\lambda_{0}(\btheta)}\Pi_{0}(\btheta,\pi(x),\pi(y))) ~\dd\btheta \right).
\end{split}
\]
By the smoothness of the trivializing sections, and the compactness of $K_{x} \times K_{y}$, using Lemma \ref{lemma:Delta_theta_local} one obtains
\[
\|S(t)\|_{C^{\ell_{x},\ell_{y}}(K_{x}\times K_{y})} \leq Ce^{-ct}.
\]
To deal with $Q$, let $\beta$ and $\gamma$ be multiindices such that $|\beta|\leq \ell_{x}$, $|\gamma| \leq \ell_{y}$. Since $a$ (see \eqref{eq:amplitude}) is compactly supported in $\btheta$, we have, writing $a_{\beta,\gamma}=\partial_{x}^{\beta}\partial_{y}^{\gamma}a$:
\[
\partial_{x}^{\beta}\partial_{y}^{\gamma}Q=\int_{U_{0}}e^{-t\lambda_{0}(\btheta)}a_{\beta,\gamma}(\btheta,x,y)~\dd \btheta.
\]
By Lemmas \ref{lemma:lambda-theta} and \ref{lemma:resonance-non-degenerate}, for each $(x,y) \in K_{x}\times K_{y}$, we can apply the stationary phase lemma to obtain
\[
t^{d/2}\partial_{x}^{\beta}\partial_{y}^{\gamma}Q=\frac{1}{(2\pi)^{d/2}\sqrt{\det \lambda_{0}''(\mathbf{0})}} \sum_{j=0}^{N-1}t^{-j}L_{j}a_{\beta,\gamma}(\mathbf{0},x,y)+R_{N,\beta,\gamma}(t,x,y),
\]
where $L_{j}$ are differential operators of order $2j$ in $\btheta$, and 
\[
|R_{N,\beta,\gamma}(t,x,y)| \leq C t^{-N}\sum_{|\alpha|\leq 2N+d+1}\sup_{\btheta \in U_{0}}|\partial_{\btheta}^{\alpha}a_{\beta,\gamma}(\btheta,x,y)|.
\]
Note that the constant $C$ is uniform in $(x,y)$ since $K_{x} \times K_{y}$ is compact. Then, 
\[
\begin{split}
    \left\| t^{d/2}\int_{U_{0}}e^{-t\lambda_{0}(\btheta)}a(\btheta,\bullet,\bullet)~\dd\btheta-\kappa \sum_{j=0}^{N-1}t^{-j} L_{j}a(\mathbf{0},x,y) \right\|_{C^{\ell_{x},\ell_{y}}} \\
    \leq Ct^{-N}\max_{|\alpha|\leq 2N+d+1}\sup_{\btheta \in U_{0}}\|\partial_{\btheta}^{\alpha}a(\btheta,\bullet,\bullet)\|_{C^{\ell_{x},\ell_{y}}}.
\end{split}
\]
The bound on the remainder then follows from this, together with Lemma \ref{ref:Pi-local} and the bound on $S$. The result now follows by setting $k_{j}=\kappa L_{j}a(\mathbf{0})$, which is bounded by Lemma \ref{ref:Pi-local}. Finally, since $L_{0}=\mathrm{Id}$, following the definitions and since $\Delta_{\mathbf{0}}=\Delta_{M_{0}}$, the $j=0$ contribution equals $\kappa$.
\end{proof}

\begin{remark}
The fact that $k_{j}$ are not identically zero follows from the fact that the coefficients $C_{j}$ are nonzero (see \S\ref{subsection:non-vanishing}). Indeed, let $f, g\in C^\infty_{\mathrm{comp}}(M)$, and take compact sets
$K_{x}, K_{y} \subset M$ containing their supports. Pairing the expansion from Theorem \ref{theorem:main1-local} with $f \otimes g$ we obtain
\[
t^{d/2}\langle e^{-t\Delta_M}f,g\rangle= \kappa \left(\int_M f\right) \overline{ \left(\int_M g\right)}+\sum_{j=1}^{N-1}t^{-j} \int_{M\times M}
k_j(x,y)f(y)\overline{g(x)}+ \mathcal{O}(t^{-N}).
\]
On the other hand, the dual expansion from Theorem \ref{theorem:main1} gives
\[
t^{d/2}\langle e^{-t\Delta_M}f,g\rangle=\kappa \left(\int_M f\right) \overline{\left(\int_M g\right)}+ \sum_{j=1}^{N-1}t^{-j}C_j(f,g) + \mathcal{O}(t^{-N}).
\]
The uniqueness of asymptotic expansions in powers of $t^{-1}$ implies that $\mathcal{C}_{j}$ is the distribution induced by the kernel $k_{j}$. Hence, if the $k_j$ were identically zero, then $C_{j}(f,g)=0$ for every $f,g \in C^\infty_{\mathrm{comp}}(M)$, contradicting Lemma \ref{lemma:cj-non-zero}. Hence $k_{j}\not\equiv 0$.
\end{remark}

\subsection{Proof of Theorem \ref{theorem:main1-global}}

Recall that $\Delta_{M}$ is invariant by isometries. In particular, it is invariant by the action of $\tau_{\n}$. Hence, $H_{M}(t,\tau_{\n}x_{0},\tau_{\mathbf{m}}y_{0})=H_{M}(t,\tau_{\n-\mathbf{m}}x_{0},y_{0})$. Therefore, it is enough to prove the result for $H_{M}(t,\tau_{\n}x_{0},y_{0})$ for $|\n|\leq R \sqrt{t}$. Write $p=\pi(x_{0})$, $q=\pi(y_{0})$. As in the proof of Theorem \ref{theorem:main1-local}, we consider a partition of unity, and we let $H_{\btheta,i}(t,p,q)$ denote the heat kernel of $\Delta_{\btheta}$ associated to the trivialization given by $s_{\btheta,i}$. Then, using Proposition \ref{prop:eta-theta} (ii) we have
\[
\begin{split}
    H_{M}(t,\tau_{\n}x_{0},y_{0}) &=\frac{1}{(2\pi)^{d}}\sum_{i=0}^{k}\int_{U_{i}}\chi_{i}(\btheta)e^{i\n \cdot \btheta}s_{\btheta,i}(x_{0}) \overline{s_{\btheta,i}(y_{0})}H_{\btheta,i}(t,p,q)~\dd\btheta \\
    &=S(t,\n,x_{0},y_{0})+\underbrace{\frac{1}{(2\pi)^{d}}\int_{U_{0}}\chi_{0}(\btheta)e^{i\n \cdot \btheta}e^{-t\lambda_{0}(\btheta)}s_{\btheta,0}(x_{0})\overline{s_{\btheta,0}(y_{0})}\Pi_{0}(\btheta,p,q)~\dd\btheta}_{Q(t,\n,x_{0},y_{0})}.
\end{split}
\]
Since $|e^{i\n \cdot\btheta}|=1$, the $C^{\ell_{x},\ell_{y}}$-norm of $S$ can be bounded using Lemma \ref{lemma:Delta_theta_local} by $Ce^{-ct}$ for any $\n \in \Z^{d}$. Now we deal with $Q$. Define $a$ as in \eqref{eq:amplitude} (with $s_{\btheta,0}$ instead of $s_{\btheta}$). Choose open neighborhoods $V_{1} \Subset V_{2} \Subset U_{0}$, and $\chi \in C_{\comp}^{\infty}(V_{2})$ with $\chi \equiv 1$ on $V_{1}$. Then, 
\[
Q:=\underbrace{\frac{1}{(2\pi)^{d}}\int \chi(\btheta)e^{i\n \cdot \btheta}e^{-t\lambda_{0}(\btheta)}a(\btheta,x_{0},y_{0}) ~\dd \btheta}_{Q_{1}}+Q_{2}.
\]
By Lemma \ref{lemma:lambda-theta}, compactness gives a constant $c_{1} > 0$ such that $\lambda_{0}(\btheta) \geq c_{1}$ on the support of $(1-\chi)a$. Hence, for every $\ell_x,\ell_y$,
\begin{equation}
\label{eq:global-far}
\sup_{\n \in \Z^{d}} |Q_{2}(t,\n,\bullet,\bullet)|_{C^{\ell_{x},\ell_{y}}(\overline{D} \times \overline{D})} \leq Ce^{-c_1t}.
\end{equation}
To deal with $Q_{1}$, let $\mathbf{z}=\n/\sqrt{t}$, and change variables $\btheta=\mathbf{u}/\sqrt{t}$. Then,
\begin{equation}
\label{eq:global-rescaled-Q}
t^{d/2}Q_{1}(t,\n,x_{0},y_{0})=\frac{1}{(2\pi)^d} \int_{\R^{d}} e^{i\mathbf{z} \cdot \mathbf{u}} e^{-t\lambda_{0}(\mathbf{u}/\sqrt{t})} A_{t}(\mathbf{u},x_{0},y_{0}) \dd \mathbf{u},
\end{equation}
where $A_{t}(\mathbf{u},x_{0},y_{0}) := \chi(\mathbf{u}/\sqrt{t}) a(\mathbf{u}/\sqrt{t},x_{0},y_{0})$ (where the integrand is extended by zero outside the rescaled support). Set $H:=D^{2}\lambda_{0}(\mathbf{0})$. By Lemma \ref{lemma:resonance-non-degenerate}, $H$ is positive definite. 

Now we will use a Taylor expansion. Let us define $\Lambda_{q}(\mathbf{u}):=\frac{1}{q!} D^{q}\lambda_{0}(\mathbf{0})[\mathbf u,\ldots,\mathbf u]$. Taylor-expanding and using Lemma \ref{lemma:resonance-non-degenerate}, we have 
\[
    t\lambda_{0}(\mathbf{u}/\sqrt{t})=\frac{1}{2}\langle H\mathbf{u},\mathbf{u}\rangle +\sum_{q=3}^{N+2}t^{1-q/2}\Lambda_{q}(\mathbf{u})+t^{-(N+1)/2}E_{N+3}(t,\mathbf{u}),
\]
where on the rescaled support we have $|E_{N+3}(t,\mathbf{u})| \leq C|\mathbf{u}|^{N+3}$. This allows us to expand the exponential up to order $N-1$ in $t^{-1/2}$: 
\begin{equation}
\label{eq:direct-exponential-expansion}
e^{-t\lambda_{0}(\mathbf{u}/\sqrt{t})}=e^{-\frac{1}{2}\langle H\mathbf{u},\mathbf{u}\rangle} \left( \sum_{j=0}^{N-1} t^{-j/2}c_{j}(\mathbf{u}) + t^{-N/2}C_{N}(t,\mathbf{u}) \right),
\end{equation}
where the $c_{j}$ are polynomials obtained from finite products of $\Lambda_{3}, \ldots, \Lambda_{j+2}$. 

On the other hand, after shrinking $V_{2}$ if necessary, there exists $c_{2}>0$ such that for $\btheta \in V_{2}$, we have $\lambda_{0}(\btheta) \geq c_{2}|\btheta|^{2}$. Therefore, on the support of $A_{t}$ we obtain $e^{-t\lambda_{0}(\mathbf{u}/\sqrt{t})} \leq e^{-c_{2}|\mathbf{u}|^{2}}$. Moreover, Taylor’s formula for the exponential shows that there exist constants $C>0$, $M_{N} \geq 0$, and $\eps>0$ such that
\begin{equation}
\label{eq:direct-CN-bound}
|C_N(t,\mathbf u)| \leq C\langle\mathbf u\rangle^{M_N} e^{\varepsilon|\mathbf u|^2}
\end{equation}
on the rescaled support. Indeed, every coefficient arising before order $t^{-N/2}$ is polynomial in $\mathbf{u}$, while the integral remainder in Taylor’s formula for the exponential is bounded by a polynomial in $\mathbf{u}$ multiplied by the exponential of the absolute value of the higher-order part of the phase. After shrinking the original neighborhood, this exponential is bounded by $e^{\eps|\mathbf{u}|^2}$, with $\eps>0$ small.

Now we use Taylor in the amplitude to obtain
\[
A_{t}(\mathbf{u},x_{0},y_{0})=\sum_{q=0}^{N-1} t^{-q/2} a_{q}(\mathbf{u},x_{0},y_{0})+t^{-N/2} F_{N}(t,\mathbf{u},x_{0},y_{0}),
\]
where $a_{q}(\mathbf{u},x_{0},y_{0})=\frac{1}{q!} D_{\btheta}^{q} a(\mathbf{0},x_{0},y_{0})[\mathbf{u},\ldots,\mathbf{u}]$, and $\|F_N(t,\mathbf{u},\bullet,\bullet)\|_{C^{\ell_{x},\ell_{y}}(\overline {D} \times \overline {D})} \leq C\langle \mathbf{u}\rangle^N$. Multiplying this expansion by \eqref{eq:direct-exponential-expansion}, we obtain polynomials
$b_j(\mathbf{u},x_{0},y_{0})$, $j=0,\ldots,N-1$, in $\mathbf{u}$, with coefficients smooth in $(x_{0},y_{0})$, and a remainder $B_{N}$ such that
\begin{equation}
\label{eq:direct-full-integrand-expansion}
\begin{split}
e^{-t\lambda_{0}(\mathbf{u}/\sqrt{t})}A_{t}(\mathbf{u},x_{0},y_{0})=e^{ \frac{1}{2}\langle H\mathbf{u},\mathbf{u}\rangle} \left(\sum_{j=0}^{N-1} t^{-j/2} b_{j}(\mathbf{u},x_{0},y_{0}) + t^{-N/2} B_{N}(t,\mathbf{u},x_{0},y_{0}) \right).
\end{split}
\end{equation}
Here, $B_{N}$ is a finite sum of terms involving $C_{N}(t,\mathbf{u})a_{q}(\mathbf{u},x_{0},y_{0})$, $c_{j}(\mathbf{u})F_{N}(t,\mathbf{u},x_{0},y_{0})$,
and products whose total power of $t^{-1/2}$ is at least $N$. Since each such term has at most polynomial growth in $\mathbf{u}$ (apart from the factor $e^{\eps|\mathbf{u}|^{2}}$ coming from \eqref{eq:direct-CN-bound}), we conclude that for every $\ell_{x},\ell_{y} \geq 0$, there exist constants $C,M_{N}>0$ and $\eps>0$ small enough such that
\[
\|B_{N}(t,\mathbf{u},\bullet,\bullet)\|_{C^{\ell_{x},\ell_{y}}(\overline{D} \times \overline{D})} \leq C\langle\mathbf{u}\rangle^{M_{N}} e^{\eps|\mathbf{u}|^{2}}.
\]
Furthermore, since $H$ is positive definite, we may choose $\eps>0$ satisfying
$\langle H\mathbf{u},\mathbf{u}\rangle/2 - \eps|\mathbf{u}|^{2} \geq
c_{1}|\mathbf{u}|^2$, for some $c_{1}>0$. Hence,
\[
e^{-\frac{1}{2}\langle H\mathbf{u},\mathbf{u}\rangle} \|B_{N}(t,\mathbf{u},\bullet,\bullet)\|_{C^{\ell_{x},\ell_{y}}} \leq C\langle \mathbf{u} \rangle^{M_{N}} e^{-c_{1}|\mathbf{u}|^{2}}.
\]
Observe that right-hand side is now integrable. Therefore, we can write
\[
t^{d/2}Q_{1}=\frac{1}{(2\pi)^{d}}\sum_{j=0}^{N-1}t^{-j/2}\int_{\R^{d}}e^{-\langle H \mathbf{u},\mathbf{u}\rangle/2}e^{i\mathbf{z}\cdot \mathbf{u}}b_{j}(\mathbf{u},x_{0},y_{0})~\dd \mathbf{u}+\mathcal{E}_{N}(t,\mathbf{z},x_{0},y_{0}),
\]
where $\|\mathcal{E}_{N}(t,\mathbf{z},\bullet,\bullet)\|_{C^{\ell_{x},\ell_{y}}}\leq C_{N}t^{-N/2}$. Also, for each $j$, the Fourier transform of the Gaussian times the polynomial $b_{j}$ has the form $e^{-\langle H^{-1}\mathbf{z},\mathbf{z} \rangle/2}
P_{j}(\mathbf{z},x_{0},y_{0})$, where $P_{j}$ is polynomial in $\mathbf{z}$ and smooth in $(x_{0},y_{0})$. The result now follows from combining the estimate of $S$ with the one of $\mathcal{E}_{N}$.

\section{Abelian covers of isometric extensions}

\label{section:decay-isometric}

Throughout this section, $p_{0} \colon P_0 \to M_0$ is a principal $G$-bundle and $\pi \colon M \to M_0$ denotes the $\Z^d$-extension, and $P =\pi^*P_0 \to M$ is the pullback $G$-bundle. When the context is clear, we will use the convention that the projection $G$-bundle map $P \to P_0$ is also denoted by $\pi$, and that the projection $P \to M$ is denoted by $p$. As in \cite{Cekic-Lefeuvre-Munoz-Thon-26}, it can be shown that $\pi \circ p \colon P\to M_0$ is a $(G \times \Z^d)$-bundle.

\subsection{Borel--Weil--Floquet theory} \label{ssection:borel-weil-floquet} In this subsection, we generalize the Floquet theory explained in \S\ref{ssection:floquet-theory} in order to include a $G$-bundle part. Let $f \in C^\infty_{\comp}(P)$. Applying Proposition \ref{prop:fourier-theory}, we may write
\[
f = \dfrac{1}{(2\pi)^d} \int_{\mathrm{U}(1)^d} F_{\btheta} ~\dd\btheta,
\]
where $F_{\btheta} \in C^\infty(P)$ is a $\Z^d$-equivariant function satisfying $\tau_{\mathbf{n}}^*F_{\btheta} = e^{i\n\cdot\btheta}F_{\btheta}$ for all $\n \in \Z^d$; the space of such functions is denoted by $C^\infty_{\btheta}(P)$. After fixing an open contractible set $U \subset \mathrm{U}(1)^d$, for $\btheta \in U$, this function can be written as $F_{\btheta} = (\pi^* f_{\btheta}) s_{\btheta}$, where $f_{\btheta} \in C^\infty(P_0)$ and $s_{\btheta} \in C^\infty_{\btheta}(P)$ has pointwise unit norm. Note that $s_{\btheta}$ can simply be taken to be the pullback by $p$ of a corresponding element of $C_{\btheta}^\infty(M)$; by slightly abusing the notation, we will denote this element by $s_{\btheta}$ as well. In turn, $f_{\btheta}$ can be decomposed using the fiberwise Fourier transform \eqref{equation:fourier-group} on the group $G$, that is $\mc{F}(f_{\btheta}) = (f_{\btheta,\bk, i})_{\bk \in \widehat{G}, i=1,\dotsc, d_{\bk}}$, where $f_{\btheta,\bk,i} \in C^\infty_{\hol}(F_0,\Lk)$ and $F_0 := P_0/T$ is the flag bundle over $M_0$. We note that technically, in the Fourier transform of the group $G$ there is an additional index $i = 1,\dotsc, d_{\bk}$ (see \S\ref{ssection:bw-calculus} for the definition of $d_{\bk}$), but we suppress it, since the arguments below are the same for every $i$.

We may also define $F_{\btheta,\bk} := (\pi^* f_{\btheta,\bk}) s_{\btheta} \in C^\infty_{\hol,\btheta}(F, \pi^*\Lk)$, where $F := P/T = \pi^*F_0$ is the flag bundle over $M$, and we see $s_{\btheta}$ as a function on $F$, and, similarly to \eqref{equation:equivariant-space}, the subscript $\btheta$ indicates the equivariance of this section with respect to the natural induced action of $\Z^d$ on $F$ which lifts to a $\Z^d$-action on $C^\infty(F, \pi^*\Lk)$. In what follows to simplify the notation we will write $\Lk \to F$ instead of $\pi^*\Lk \to F$. Then we may write $\mc{F}(f) = (f_{\kk, i})_{\bk \in \widehat{G}, i = 1, \dotsc, d_{\bk}}$, where $f_{\kk, i} \in C^\infty_{\hol}(F, \Lk)$ and
\[
    f_{\kk, i} = \frac{1}{(2\pi)^d} \int_{\mathrm{U}(1)^d} F_{\btheta, \kk, i} ~\dd\btheta.
\]
Note that here $F_{\btheta, \kk, i}$ do not depend on $U$ and the choices made when selecting $(s_{\btheta})_{\btheta \in U}$. In summary, we may either take a partial Fourier transform in $G$, and then in $\Z^d$, or vice versa, and the two procedures commute.

Alternatively, one may choose not to trivialize the line bundle $L_{\btheta} \to M_0$ (using the section $s_{\btheta}$); in this case, $f_{\btheta,\bk}$ should be seen as a fiberwise holomorphic section of $p_{F_{0}}^* L_{\btheta} \otimes \Lk \to F_0$, where $p_{F_{0}} \colon F_0 \to M_0$ is the footpoint projection and $p_{F_{0}}^*L_{\btheta} \to F_0$ is the pullback bundle. Since the realizations described here are all unitarily equivalent, we will not further distinguish between the corresponding objects.

The following result generalizes Proposition \ref{prop:fourier-theory}; its proof is a straightforward consequence of the previous paragraph, and the fact that the Fourier transform is an $L^2$-isometry, see e.g. \cite[Lemma 2.2.4]{Cekic-Lefeuvre-24}:

\begin{proposition} \label{proposition:borel-floquet-weil}
Let $f \in C^\infty_{\comp}(P)$. Then
\[
f = \dfrac{1}{(2\pi)^d} \int_{\mathrm{U}(1)^d} \mc{F}^{-1}\left(\bigoplus_{\bk \in \widehat{G}, i = 1, \dotsc, d_{\bk}} F_{\btheta,\bk, i}\right) \dd\btheta.
\]
In addition, we have the following Parseval identity: for all $f,g \in C_{\mathrm{comp}}^\infty(P)$,
\[
\langle f,g \rangle_{L^2(P)} = \dfrac{1}{(2\pi)^d} \sum_{\bk \in \widehat{G}} d_{\bk}\sum_{i = 1}^{d_{\bk}}\int_{\mathrm{U}(1)^d} \langle F_{\btheta,\bk, i}, G_{\btheta,\bk, i}\rangle_{L^2(F_0, p_{F_{0}}^{*}L_{\btheta} \otimes \Lk)} \dd \btheta.
\]
\end{proposition}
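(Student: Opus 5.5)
The plan is to compose the two decompositions already at our disposal: the Floquet decomposition of Proposition \ref{prop:fourier-theory} in the $\Z^d$-direction, followed by the Peter--Weyl/Borel--Weil Fourier transform \eqref{equation:fourier-group} in the $G$-fibers. The first point is to see that the Floquet step lifts from $M$ to $P$. For $f\in C^\infty_{\comp}(P)$, set $F_{\btheta}(u):=\sum_{\n}f(\tau_{\n}u)e^{-i\n\cdot\btheta}$, where $\tau_{\n}$ now denotes the lifted $\Z^d$-action on $P=\pi^*P_0$. This action commutes with the right $G$-action, because $P$ is a $(G\times\Z^d)$-bundle over $M_0$ and $G\times\Z^d$ is a direct product. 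The sum is locally finite, so $F_{\btheta}$ is smooth and $\btheta$-equivariant. Integrating the Fourier series over $\mathrm{U}(1)^d$ gives $f=(2\pi)^{-d}\int F_{\btheta}\,\dd\btheta$, exactly as in the proof of Proposition \ref{prop:fourier-theory}.

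For the Parseval identity on $P$, I would fix a fundamental domain $\mathcal{D}\subset P$ for the $\Z^d$-action, for instance $p^{-1}$ of a fundamental domain in $M$. I would then expand
\[
\int_{\mathrm{U}(1)^d}\int_{\mathcal{D}}F_{\btheta}\overline{G_{\btheta}}\,\dd\nu\,\dd\btheta .
\]
Orthogonality of the characters $e^{-i\n\cdot\btheta}$ collapses the double sum over $\n,\mathbf{m}$ to the diagonal. Summing the diagonal over the translates $\tau_{\n}\mathcal{D}$ recovers $(2\pi)^d\langle f,g\rangle_{L^2(P)}$. Next, $\mathcal{D}$ is identified isometrically with $P_0$ through the trivialization $F_{\btheta}=(\pi^*f_{\btheta})s_{\btheta}$. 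Since $|s_{\btheta}|=1$, this gives $\langle F_{\btheta},G_{\btheta}\rangle_{L^2(\mathcal{D})}=\langle f_{\btheta},g_{\btheta}\rangle_{L^2(P_0)}$. One could equally work without trivializing, viewing these as sections of the pullback of $L_{\btheta}$ to $P_0$.

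Then I would apply, for each fixed $\btheta$, the $L^2$-isometry property of the fiberwise Fourier transform $\mc{F}$ on $P_0$ (\cite[Lemma 2.2.4]{Cekic-Lefeuvre-24}). This writes $\langle f_{\btheta},g_{\btheta}\rangle_{L^2(P_0)}$ as the sum over $\bk\in\widehat{G}$ and $i\le d_{\bk}$ of the pairings of $f_{\btheta,\bk,i}$ and $g_{\btheta,\bk,i}$, weighted by $d_{\bk}$ according to the normalization conventions of \cite{Cekic-Lefeuvre-24}. The reconstruction formula follows the same way: since $\mc{F}^{-1}$ is continuous on smooth functions, $f_{\btheta}=\mc{F}^{-1}(\bigoplus f_{\btheta,\bk,i})$, and multiplying by $s_{\btheta}$ (which is constant along the $G$-fibers, so it commutes with $\mc{F}$) gives $F_{\btheta}=\mc{F}^{-1}(\bigoplus F_{\btheta,\bk,i})$. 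Substituting into the $\btheta$-integral yields the claimed reconstruction formula.

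The main obstacle is justifying the exchange of the $\btheta$-integral with the infinite sum over $\bk$. I would handle it as follows:
\begin{itemize}
\item By Fubini--Tonelli and Cauchy--Schwarz, it suffices that $\sum_{\bk,i}d_{\bk}\|F_{\btheta,\bk,i}\|^2$ equals $\|f_{\btheta}\|^2_{L^2(P_0)}$.
\item This quantity is bounded uniformly in $\btheta$ by $C\|f\|^2_{B^{0,0}}$, since $f$ has compact support.
\item Hence dominated convergence applies.
\end{itemize}
A secondary issue is that the $s_{\btheta}$ are only defined over contractible sets $U$. This is resolved by the observation above that the $F_{\btheta,\bk,i}$ themselves are independent of the choice of trivialization, so one can patch with a partition of unity as in the proof of Theorem \ref{theorem:main1}.
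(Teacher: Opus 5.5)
Your proposal is correct and takes the same route as the paper. The paper calls the result a straightforward consequence of two facts: the Floquet decomposition lifted to $P$, with the $F_{\btheta,\bk,i}$ independent of the choice of trivialization, and the $L^2$-isometry of the fiberwise Fourier transform on $P_0$ \cite[Lemma 2.2.4]{Cekic-Lefeuvre-24}; your fundamental-domain computation, Fubini/dominated-convergence step and patching over contractible sets make explicit what the paper leaves implicit.
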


\subsection{The horizontal Laplacian} \label{subsection:horizontal-laplacian}

Let $P$ be as above. The fixed $G$-equivariant connection $\nabla^{P_0}$ pulls back through $\pi\colon P=\pi^{*}P_{0} \to P_{0}$ to a $G$-equivariant connection $\nabla:=\pi^{*}\nabla^{P_{0}}$ on $P$. We denote by $\mathbb {H}_{P} \subset TP$ the corresponding horizontal distribution. Thus $TP=\mathbb{H}_{P} \oplus \mathbb{V}_{P}$, where $\mathbb{V}_{P}:=\ker dp$. We also define their duals by $\HH_{P}^{*}(\V_{P})=0=\V_{P}^{*}(\HH_{P})$. It can be checked that both the $G$-action and the $\Z^d$-action preserve $\mathbb{H}_{P}$. The pullback connection also induces horizontal distributions $\mathbb{H}_{F} \subset TF$, and connections on the pulled-back Borel--Weil line bundles $\mathbf{L}^{\otimes\bk} \to F$. These are the pullbacks of the corresponding horizontal distribution and connections on $F_{0}$.

The restriction of the exterior derivative to the horizontal distribution defines the horizontal exterior derivative
\[
d_{\mathbb{H}_{P}}\colon C^{\infty}(P) \to C^{\infty}(P,\mathbb{H}_{P}^{*}), \qquad d_{\mathbb{H}_{P}}f:=df|_{\mathbb{H}_{P}}.
\]
We equip $\mathbb{H}_{P}$ with the metric transported from $TM$ through the isomorphism $dp|_{\mathbb{H}_{P}}\colon \mathbb{H}_{P} \to TM$, and $P$ with the measure given locally by $\dd\vol_{M} \wedge \dd g$. The \emph{horizontal Laplacian} on $P$ is
\[
\Delta_P^{\HH} :=(d_{\mathbb H_{P}})^*d_{\mathbb H_{P}},
\]
where $(d_{\mathbb{H}_{P}})^*$ denotes the formal $L^2$-adjoint with respect to these structures. One advantage of this operator is that it is compatible with both the $G$-action and the deck action of $\Z^d$, allowing it to be simultaneously decomposed by the Abelian Fourier transform and the Borel--Weil calculus, that is, since the connection on $P$ is invariant under the actions of $G$ and $\Z^{d}$, $\Delta_{P}^{\mathbb{H}}$ commutes with both actions:
\[
\Delta_{P}^{\mathbb{H}} R_{g}^{*}=R_{g}^{*}\Delta_{P}^{\mathbb{H}}, \qquad \Delta_{P}^{\mathbb{H}}\tau_{\n}^{*}=\tau_{\n}^{*}\Delta_{P}^{\mathbb{H}}.
\]
It therefore preserves $C^{\infty}_{\btheta}(P)$ and, within each such space, decomposes further according to the Borel--Weil decomposition.

We work with the horizontal Laplacian because it is canonically determined by the connection and is compatible with both the $G$- and the $\Z^{d}$-actions. For a connection metric on $P$, the Laplace--Beltrami operator decomposes as the sum of horizontal and vertical Laplacians, and the vertical component contributes only exponentially decaying modes. We return to this point in Section \ref{subsection:laplacian}.

We begin with the Abelian Fourier transform along the $\Z^{d}$-cover, which decomposes $\Delta_{P}^{\mathbb{H}}$ into a family of twisted horizontal Laplacians $(\Delta_{\btheta}^{\mathbb{H}})_{\btheta \in \mathrm{U}(1)^{d}}$ acting on $P_{0}$. Second, the Borel--Weil decomposition on the compact principal bundle $P_{0} \to M_{0}$ decomposes each operator $\Delta_{\btheta}^{\mathbb{H}}$ into operators $(\Delta_{\boldsymbol\theta,\mathbf{k}})_{\btheta \in \mathrm{U}(1)^{d},\bk \in \widehat{G}}$.

To construct these operators, let $F_{0} \to M_{0}$ be the flag bundle associated with $P_{0} \to M_{0}$. Applying the Borel--Weil calculus of \S\ref{ssection:bw-calculus}, for every $\bk \in \widehat{G}$ one obtains a connection $\nabla_{\bk}^{\HH_{F_{0}}} \colon C^{\infty}_{\mathrm{hol}} (F_{0},\mathbf {L}^{\otimes\bk}) \to  C^{\infty}_{\mathrm{hol}} (F_{0},\mathbf{L}^{\otimes\bk} \otimes \mathbb{H}_{F_{0}}^{*})$. Set $\Delta_{\bk}:=(\nabla_{\bk}^{\HH_{F_{0}}})^{*}\nabla_{\bk}^{\HH_{F_{0}}}$. In light of \cite[Lemma~2.2.10]{Cekic-Lefeuvre-24}, the Borel--Weil decomposition intertwines the horizontal differential with these connections, that is, for every $f\in C^\infty(P_{0})$,
\begin{equation}
\label{eq:horizontal-modes}
(d_{\mathbb{H}_{P_{0}}}f)_{\bk} = \nabla_{\bk}^{\HH_{F_{0}}}f_{\bk}, \qquad (\Delta_{P_{0}}^{\mathbb {H}}f)_{\bk} = \Delta_{\bk}f_{\bk}. 
\end{equation}
Here we recall that as before we have $TP_{0}=\HH_{P_{0}}\oplus\V_{P_{0}}$, $\V_{P_{0}}:=\ker dp_{0}$, and $d_{\HH_{P_{0}}} \colon C^{\infty}(P_{0}) \to C^{\infty}(P_{0},\HH_{P_{0}}^{*})$.

We now construct the twisted operators $\Delta_{\btheta,\bk}$. For $\btheta\in\mathrm{U}(1)^{d}$, write $F_{\btheta} = (\pi^{*}f_{\btheta}) (p^{*}s_{\btheta})$, where $p \colon P \to M$, $\pi \colon P \to P_{0}$, and $s_{\btheta}$ is the section introduced in Proposition~\ref{prop:eta-theta}.

Using the Leibniz rule,
\begin{equation}
\label{eq:d-H1}
d_{\mathbb {H}_{P}}F_{\btheta} = (p^{*}s_{\btheta}) d_{\mathbb{H}_{P}}(\pi^{*}f_{\btheta}) + (\pi^{*}f_{\btheta}) d_{\mathbb{H}_{P}}(p^{*}s_{\btheta}).
\end{equation}
Since $f_{\btheta}\in C^{\infty}(P_{0})$, $d_{\mathbb{H}_{P}}(\pi^{*}f_{\btheta}) =\pi^{*}(d_{\mathbb {H}_{P_{0}}}f_{\btheta})$. By Proposition \ref{prop:eta-theta} we also have $ds_{\btheta} = is_{\btheta}\pi^{*}\eta_{\btheta}$. Pulling back by $p$ and restricting to $\mathbb{H}_{P}$ gives
\[
d_{\mathbb{H}_{P}}(p^{*}s_{\btheta}) = i(p^{*}s_{\btheta}) p^{*}\pi ^{*}\eta_{\btheta} = i(p^{*}s_{\btheta}) \pi^{*}p_{0}^{*}\eta_{\btheta},
\]
where we used the fact that $\pi \circ p =p_{0}\circ\pi$. Using this together with \eqref{eq:d-H1}, we obtain
\[
d_{\mathbb{H}_{P}}F_{\btheta} = (p^{*}s_{\btheta}) \pi^{*} (d_{\mathbb{H}_{P_{0}}} + ip_{0}^{*}\eta_{\btheta}) f_{\btheta}.
\]
The same argument applies to the formal adjoint. Therefore,
\[
\Delta_{P}^{\mathbb{H}} F_{\btheta} = (p^{*}s_{\btheta}) \pi^{*}(\Delta_{\btheta}^{\mathbb{H}} f_{\btheta}), \qquad \Delta_{\btheta}^{\mathbb{H}}= (d_{\mathbb{H}_{P_{0}}} + ip_{0}^{*}\eta_{\btheta})^{*}
(d_{\mathbb{H}_{P_{0}}} + ip_{0}^{*}\eta_{\btheta}).
\]
Thus, the horizontal Laplacian $\Delta_P^{\mathbb{H}}$ is represented by the family $(\Delta_{\btheta}^{\mathbb{H}})_{\btheta \in \mathrm{U}(1)^{d}}$. 

Note that by our hypothesis on the curvature, $\Delta_{P_{0}}^{\HH}$ is (locally) hypoelliptic (\cite[\S1.2.2]{Cekic-Lefeuvre-24}), and it follows from the previous computation that the same is true for $\Delta_{P}^{\mathbb{H}}$. This ensures the smoothness of $H_{P}^{\HH}$. 

Applying now the Borel--Weil decomposition, $\mathcal F(f_{\btheta})=(f_{\btheta,\bk,i})_{\bk,i}$, and suppressing the multiplicity index $i$, equation \eqref{eq:horizontal-modes} gives
\begin{equation} \label{eq:operator-floquet-borel-weil}
    (\Delta_{\btheta}^{\mathbb{H}} f_{\btheta})_{\bk} = (\nabla_{\bk}^{\HH_{F_{0}}} + ip_{F_{0}}^{*}\eta_{\btheta})^{*} (\nabla_{\bk}^{\HH_{F_{0}}} + ip_{F_{0}}^{*}\eta_{\btheta}) f_{\btheta,\bk}=:\nabla_{\btheta,\bk}^{*}\nabla_{\btheta,\bk}f_{\btheta,\bk} =: \Delta_{\btheta,\bk} f_{\btheta,\bk}.
\end{equation}
In summary, after applying first the Abelian Fourier transform and then the Borel--Weil decomposition, the horizontal Laplacian on the noncompact bundle $P$ is reduced to the family of operators $(\Delta_{\btheta,\bk})_{\btheta \in \mathrm{U}(1)^{d},\bk \in \widehat{G}}$ acting on the compact flag bundle $F_0$. Note that a more explicit form for this family is
\[
\Delta_{\btheta,\bk} f_{\btheta,\bk} =\Delta_{\bk} f_{\btheta,\bk}- 2i \langle p_{F_{0}}^{*}\eta_{\btheta}, \nabla_{\bk}^{\HH_{F_{0}}} f_{\btheta,\bk} \rangle + i d_{\mathbb{H}_{F_{0}}}^{*} (p_{F_{0}}^{*}\eta_{\btheta}) f_{\btheta,\bk} + |p_{F_{0}}^{*}\eta_{\btheta}|^{2} f_{\btheta,\bk}.
\]

\subsection{The Laplace operator} \label{subsection:laplacian}

Throughout this subsection, fix an $\mathrm{Ad}$-invariant inner product $\langle\cdot,\cdot\rangle_{\mathfrak g}$ on the Lie algebra $\mathfrak{g}$. Since $G$ is compact, it induces a bi-invariant Riemannian metric $g_{G}$ on $G$. Together with the Riemannian metric on $M$ and the connection $TP = \mathbb{H}_{P} \oplus \mathbb{V}_{P}$, this determines the \emph{connection metric} on $P$, uniquely characterized by
\begin{enumerate}
    \item $\mathbb{H}_{P}$ and $\mathbb{V}_{P}$ are orthogonal;
    \item the restriction $dp|_{\mathbb H_{P}} \colon \mathbb{H}_{P} \to TM$ is an isometry;
    \item the induced metric on each fiber is the fixed bi-invariant metric on $G$.
\end{enumerate}
We denote by $\Delta_{P}$ the corresponding Laplace--Beltrami operator. The bi-invariant metric on $G$ also defines a Laplace--Beltrami (or Casimir) operator $\Delta_G$ on the Lie group. Since every fiber of $p \colon P\to M$ can be identified with $G$, this induces a second-order differential operator on $P$, called the \emph{vertical Laplacian}. More precisely, given $f\in C^\infty(P)$ and $u\in P$, define $\widetilde{f}_{u}(g):=f(u\cdot g)$ (where $\cdot$ denotes the right action of $G$ on $P$). The vertical Laplacian is
\[
\Delta_{P}^{\mathbb V}f(u) :=\Delta_G\widetilde{f}_{u}|_{g=\mathbf{1}_{G}}.
\]
Following the definitions, we see the following relation between the different types of Laplacians that we have so far.

\begin{proposition}
With respect to the connection metric,
\[
\Delta_{P} = \Delta_P^{\mathbb{H}} +\Delta_{P}^{\mathbb{V}}.
\]
Moreover, $[\Delta_P^{\mathbb{H}},\Delta_{P}^{\mathbb{V}}]=0$.
\end{proposition}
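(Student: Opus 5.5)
The plan is to work locally, with a Riemannian submersion–style computation adapted to the connection metric. Both claims are local and $G$-equivariant, so I will fix a point of $M$, take a local trivialization $P|_V \simeq V \times G$ over a small open set $V\subset M$, and write the connection $1$-form as $\omega = \mathrm{Ad}_{g^{-1}}A + g^{-1}dg$, with $A$ a $\mathfrak g$-valued $1$-form on $V$.

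\textbf{Step 1: a frame.} Choose a local orthonormal frame $(X_1,\dots,X_n)$ of $TM$, with horizontal lifts $X_i^{\mathbb H}$. Choose an $\langle\cdot,\cdot\rangle_{\mathfrak g}$-orthonormal basis $(e_a)$ of $\mathfrak g$, with fundamental vertical fields $e_a^{\#}$; these are the left-invariant fields along each fiber. By properties (1)–(3) of the connection metric, $\{X_i^{\mathbb H},e_a^{\#}\}$ is a local orthonormal frame of $TP$.

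\textbf{Step 2: divergence formula.} For an orthonormal frame $(E_\alpha)$ one has
\[
\Delta_P f=-\sum_\alpha \bigl(E_\alpha^2 f-(\nabla^{LC}_{E_\alpha}E_\alpha)f\bigr),
\]
or equivalently $\Delta_P f=-\sum_\alpha\bigl(E_\alpha^2 f+(\operatorname{div}E_\alpha)\,E_\alpha f\bigr)$. I will compute the divergences with respect to the volume $\dd\vol_M\wedge \dd g$, which is the Riemannian volume of the connection metric up to a constant.

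\emph{Vertical fields.} $e_a^{\#}$ generates the right action by $\exp(se_a)$. This action preserves the measure, since the Haar measure is bi-invariant and the action acts fiberwise. Hence $\operatorname{div}e_a^{\#}=0$ and
\[
-\sum_a (e_a^{\#})^2 f=\Delta_P^{\mathbb V}f,
\]
because $-\sum_a e_a^2$ is the Casimir operator $\Delta_G$ for a bi-invariant metric.

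\emph{Horizontal fields.} $X_i^{\mathbb H}=X_i-\sum_a\langle \mathrm{Ad}_{g^{-1}}A(X_i),e_a\rangle e_a^{\#}$ in the trivialization. The vertical part is a combination of fundamental fields whose coefficients depend only on $(x,g)$. Its fiberwise divergence is therefore the Haar divergence of the right-invariant field $-A(X_i)^R$, which vanishes by unimodularity of compact $G$. So $\operatorname{div}_P X_i^{\mathbb H}=\operatorname{div}_M X_i\circ p$.

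\emph{Conclusion of the split.} This is exactly the expression obtained by writing $(d_{\mathbb H_P})^*d_{\mathbb H_P}$ in the frame $X_i^{\mathbb H}$. Summing the two parts gives $\Delta_P=\Delta_P^{\mathbb H}+\Delta_P^{\mathbb V}$.

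\textbf{Step 3: commutation.} The key point is that $\Delta_P^{\mathbb V}$ is given by the Casimir element $\Omega=\sum_a e_a^2\in Z(U(\mathfrak g))$, acting through the infinitesimal right action. Since $\Delta_P^{\mathbb H}$ commutes with $R_g^*$ for all $g$ (the connection is $G$-equivariant), differentiating gives $[\Delta_P^{\mathbb H},e_a^{\#}]=0$. Hence $\Delta_P^{\mathbb H}$ commutes with any polynomial in the $e_a^{\#}$, in particular with $\Delta_P^{\mathbb V}$.

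As a cross-check, in the Peter–Weyl/Borel–Weil decomposition $\Delta_P^{\mathbb V}$ acts on the $\mathbf k$-isotypic component by the scalar Casimir eigenvalue $c(\mathbf k)$, while $\Delta_P^{\mathbb H}$ preserves each component by \eqref{eq:horizontal-modes}.

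\textbf{Main obstacle.} The main obstacle is the divergence bookkeeping for $X_i^{\mathbb H}$: verifying that the vertical correction term is divergence-free and introduces no first-order cross term. I would handle it via unimodularity as above. Alternatively, I would check that $\mathbb H_P$ has totally geodesic fibers of the connection metric: the fibers are totally geodesic because the $G$-action is isometric with bi-invariant fiber metric. Then the mean curvature vanishes, and the general Bérard-Bergery–Bourguignon formula for Riemannian submersions with totally geodesic fibers gives the splitting.
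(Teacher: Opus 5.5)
Your argument is correct, and it is the verification the paper leaves implicit when it says the identity follows from the definitions. In the adapted orthonormal frame $\{X_i^{\mathbb H}, e_a^{\#}\}$ the Laplacian splits term by term, the fundamental fields are divergence-free so their part gives $\Delta_P^{\mathbb V}$, and $[\Delta_P^{\mathbb H},e_a^{\#}]=0$ follows by differentiating the $G$-equivariance of $\Delta_P^{\mathbb H}$.

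One simplification: the horizontal divergence computation you flag as the main obstacle is not needed. $\Delta_P^{\mathbb H}=(d_{\mathbb H_P})^*d_{\mathbb H_P}$ is defined with a measure proportional to the Riemannian volume and uses the same orthonormal frame, so the horizontal terms of $\Delta_P$ equal $\Delta_P^{\mathbb H}$ by definition and no cross terms can occur.
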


The importance of this decomposition stems from the spectral properties of $\Delta_{P}^{\mathbb V}$. Indeed, since $G$ is compact, the spectrum of $\Delta_G$ is discrete, $0=\mu_{0}<\mu_{1} \leq \cdots$, and therefore the same is true fiberwise for $\Delta_{P}^{\V}$. Furthermore,
\[
\ker(\Delta_{P}^{\mathbb{V}}) =\{ f\in C^\infty(P): f \text{ is constant on every fiber}\}.
\]
Hence every nonconstant fiber mode is exponentially damped by a factor $e^{-t\mu_j}$ with $\mu_j>0$. As a consequence, the long-time asymptotic behavior of the heat semigroup generated by $\Delta_P$ is entirely determined by the horizontal heat semigroup. Corollary \ref{corollary:laplace-expansion} hence follows.

\subsection{Spectral gap} \label{subsection:spectral-gap}

As in the proof of Theorem \ref{theorem:main1}, we will need a bound for the first eigenvalue of $\Delta_{\btheta,\bk}$. It would be useful to the reader to recall the definitions in \S\ref{subsection:curvatures}.

\begin{theorem} \label{thm:spectral-gap}
Assume that the curvature associated to $\nabla^{P_{0}}$ is globally nondegenerate and that $\mathrm{Hol}(P,\nabla)$ is dense in $G \times \Z^{d}$. Then, there exists $C>0$ such that, for all $\btheta \in \mathrm{U}(1)^{d}$ and every $\bk \in \widehat{G} \setminus \{\mathbf{0}\}$,
\[ 
\lambda_{0}(\btheta,\bk) \geq C.
\]
\end{theorem}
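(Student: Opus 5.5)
Since $\Delta_{\btheta,\bk}=\nabla_{\btheta,\bk}^{*}\nabla_{\btheta,\bk}$ is nonnegative, the bound splits into two regimes: large $|\bk|$, handled by a quantitative curvature estimate uniform in $\btheta$, and finitely many small $\bk\neq\mathbf{0}$, handled by a qualitative holonomy argument plus compactness of $\mathrm{U}(1)^d$.

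\emph{Large $|\bk|$.} Following \cite{Cekic-Lefeuvre-24}, I would prove
\[
\lambda_{0}(\btheta,\bk)\geq F_{\min}|\bk| - C,
\]
with $C$ independent of $\btheta$. The twisted connection $\nabla_{\btheta,\bk}=\nabla_{\bk}^{\HH_{F_{0}}}+ip_{F_{0}}^{*}\eta_{\btheta}$ differs from $\nabla_{\bk}^{\HH_{F_{0}}}$ by a closed real $1$-form pulled back from $M_0$. So it has the same horizontal curvature $\bk\cdot\mathbf{F}_{\overline{\nabla}}$ restricted to $\HH_{F_0}$; the extra term $i\,p_{F_0}^*d\eta_{\btheta}$ vanishes. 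This is exactly the flatness of $L_{\btheta}$ mentioned in the introduction.

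The lower bound in \cite{Cekic-Lefeuvre-24} is a Bochner/Lichnerowicz-type commutator estimate. Locally one writes $\|\nabla_X u\|^2+\|\nabla_Y u\|^2 \ge |\langle [\nabla_X,\nabla_Y]u,u\rangle| - \ldots$, with the commutator given by curvature, and then restricts to fiberwise holomorphic sections, where the curvature term dominates as a Toeplitz symbol. That argument depends only on the curvature and on bounded zeroth-order terms. I would therefore rerun it with $\nabla_{\btheta,\bk}$ in place of $\nabla_{\bk}^{\HH_{F_0}}$. Since $\eta_{\btheta}$ can be chosen from finitely many smooth families over good sets $U_i$ covering $\mathrm{U}(1)^d$, the error terms are uniformly bounded. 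Alternatively one can invoke gauge invariance: $\Delta_{\btheta,\bk}$ is unitarily equivalent under a change of trivialization, so only the class $[\eta_{\btheta}]$ matters.

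By global nondegeneracy, $F_{\min}>0$ (\cite[Lemma~2.2.14]{Cekic-Lefeuvre-24}). Hence there is $k_0$ with $\lambda_0(\btheta,\bk)\ge 1$ for $|\bk|\ge k_0$ and all $\btheta$.

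\emph{Finitely many $\bk$.} For $0<|\bk|<k_0$, continuity of $\btheta\mapsto\lambda_0(\btheta,\bk)$ and compactness of $\mathrm{U}(1)^d$ reduce the claim to showing $\lambda_0(\btheta,\bk)\neq0$ for every $\btheta$. Continuity follows locally from the smooth families $\eta_{\btheta}$; the eigenvalue is gauge invariant, so it is globally well defined.

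Suppose $\Delta_{\btheta,\bk}u=0$ with $u\neq0$. Then $\nabla_{\btheta,\bk}u=0$, so $u$ is a horizontally parallel fiberwise holomorphic section of $p_{F_0}^*L_{\btheta}\otimes\Lk$. Equivalently, $u$ is a parallel section over $M_0$ of the associated bundle $L_{\btheta}\otimes E_{\bk}$. Its value at $x_0$ is then a nonzero vector $v$ in $\C\otimes V_{\bk}$ fixed by the holonomy representation:
\[
e^{-i\btheta\cdot\rho(\gamma)}\rho_{\bk}(g_\gamma)v=v\quad\text{for all loops }\gamma.
\]
By \eqref{eq:holonomy} and density of $\mathrm{Hol}(P,\nabla)$ in $G\times\Z^d$, the continuous character-twisted representation $(g,\n)\mapsto e^{-i\btheta\cdot \n}\rho_{\bk}(g)$ fixes $v$ on all of $G\times\Z^d$. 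Taking $\n=\mathbf{0}$ shows $\rho_{\bk}(g)v=v$ for all $g\in G$. Irreducibility then forces $\rho_{\bk}$ to be trivial, i.e. $\bk=\mathbf{0}$, a contradiction. This gives a positive minimum over the finitely many remaining $\bk$, and the constant $C$ is the minimum of $1$ and these finitely many values.

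\emph{Main obstacle.} The step I expect to be hardest is the uniformity in $\btheta$ of the large-$|\bk|$ bound. The family $L_{\btheta}$ cannot be trivialized globally in $\btheta$, so the Borel--Weil semiclassical estimate must be checked to depend only on curvature and on uniformly bounded lower-order data. I would handle this by working on each $U_i$ of a finite good cover and tracking that the constants in the estimate of \cite{Cekic-Lefeuvre-24} depend on $\eta_{\btheta}$ only through finitely many $C^k$ norms.
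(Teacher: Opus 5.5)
Your proposal is correct and follows the paper's proof. For large $|\bk|$ you use a curvature commutator estimate that is uniform in $\btheta$ because $d\eta_{\btheta}=0$ leaves $\bk\cdot\mathbf{F}_{\overline{\nabla}}$ unchanged (Theorem~\ref{theorem:twisted-CL} via Lemma~\ref{lemma:identity}); for the finitely many remaining $\bk\neq\mathbf{0}$ you use the holonomy argument of Lemma~\ref{lemma:lambda-theta-k} together with continuity and compactness of $\mathrm{U}(1)^{d}$. Your closed-stabilizer phrasing of the holonomy step is a slightly cleaner version of the paper's restriction to $\ker\rho$. One caveat: the commutator argument as actually carried out gives $\lambda_{0}(\btheta,\bk)\geq (F_{\min}/2-\eps)|\bk|$ for $|\bk|>R(\eps)$, with $\mathcal{O}(|\bk|^{1/2})$ errors, not your sharper $F_{\min}|\bk|-C$. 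This does not affect the proof, since you only use a linear lower bound with positive slope.
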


Here $\lambda_{0}(\btheta,\bk)$ is the first eigenvalue corresponding to the operator $\Delta_{\btheta,\bk}$ defined in \eqref{eq:operator-floquet-borel-weil}. To prove the result, we will adapt the result obtained in \cite{Cekic-Lefeuvre-24} for the Borel--Weil operator $\Delta_{\bk}$ constructed from the horizontal Laplacian on $G$-principal extensions, to our case where we also have an $\Z^{d}$-component. We will prove:

\begin{theorem} \label{theorem:twisted-CL}
Assume that the curvature associated to $\nabla^{P_{0}}$ is globally nondegenerate. Then, for all $\eps>0$, there exists $R>0$ (independent of $\btheta$) such that:
\[
\lambda_{0}(\btheta,\bk) \geq \left(\frac{F_{\min}}{2}-\eps \right)|\bk|, \qquad \forall \btheta \in \mathrm{U}(1)^{d},\, \forall \bk \in \widehat{G} \text{ with } |\bk|>R.
\]
\end{theorem}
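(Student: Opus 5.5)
The plan is to reduce Theorem \ref{theorem:twisted-CL} to the untwisted bound of \cite{Cekic-Lefeuvre-24}, namely $\lambda_{0}(\Delta_{\bk}) \geq (F_{\min}/2-\eps)|\bk|$ for $|\bk|$ large. The key observation is that the twist $ip_{F_{0}}^{*}\eta_{\btheta}$ is a \emph{flat} (closed, real) connection one-form pulled back from $M_0$. Adding it to $\nabla_{\bk}^{\HH_{F_{0}}}$ does not change the curvature entering the Cekić--Lefeuvre argument. Equivalently, $\Delta_{\btheta,\bk}$ is the horizontal Borel--Weil Laplacian for the connection on $P_{0}\times_{M_0} L_{\btheta}$ (a $G\times \mathrm{U}(1)$-bundle), and the $\mathrm{U}(1)$-factor contributes zero curvature.

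\emph{Step 1 (Bochner/Weitzenböck-type identity).} For a local orthonormal frame $X_1,\dots,X_n$ of $TM_0$ with horizontal lifts $X_j^{\HH}$, set $D_j := (\nabla_{\btheta,\bk})_{X_j^{\HH}}$ acting on fiberwise holomorphic sections. Then $\langle \Delta_{\btheta,\bk}u,u\rangle = \sum_j \|D_j u\|^2$ up to lower-order terms that come only from the base geometry and are independent of $\btheta,\bk$. The commutator is $[D_j,D_l] = D_{[X_j,X_l]^{\HH}} + F_{\nabla_{\btheta,\bk}}(X_j^{\HH},X_l^{\HH})$. Since $d\eta_{\btheta}=0$, the added term $i\,d(p_{F_0}^*\eta_{\btheta})$ vanishes, so
\[
F_{\nabla_{\btheta,\bk}}(X_j^{\HH},X_l^{\HH}) = \bk\cdot\mathbf{F}_{\overline{\nabla}}(X_j^{\HH},X_l^{\HH}).
\]
This is exactly the curvature in the untwisted case (cf.\ the remark on Lemma \ref{lemma:identity}). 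When acting on holomorphic sections, the vertical part of $[X_j,X_l]^{\HH}$ is the fiberwise Chern derivative, and it is handled as in \cite{Cekic-Lefeuvre-24} via the Toeplitz structure.

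\emph{Step 2 (microlocal lower bound).} Here I would repeat the semiclassical argument of \cite{Cekic-Lefeuvre-24} with $h=|\bk|^{-1}$. View $h^{2}\Delta_{\btheta,\bk}$ as a Borel--Weil operator in $\Psi^{2}_{h,\mathrm{BW}}(P_0)$. Because $\eta_{\btheta}$ is bounded uniformly in $\btheta$ (choose harmonic representatives on the finitely many good sets $U_i$ covering $\mathrm{U}(1)^d$), the twist contributes $h\eta_{\btheta}$ to the symbol $h\nabla$. This is a subprincipal perturbation, uniformly bounded in $\btheta$. The principal symbol, $|\xi|^2$ on $\HH_{F_0}^*$ restricted to the relevant region, is unchanged. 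The lower bound in \cite{Cekic-Lefeuvre-24} comes from a local harmonic-oscillator model at each point $(x,\mathbf{l})$, which gives the estimate $\sum_j\|D_ju\|^2 \geq (\max|{\bk\cdot\mathbf F}| - \eps|\bk|)\|u\|^2$ via a sharp G\aa rding / partition-of-unity argument. Two features of this model matter. First, it depends only on the curvature, by Step 1. Second, a locally exact closed form can be gauged away by a local unitary multiplication: on a contractible chart $V\subset M_0$, write $\eta_{\btheta}=d\phi_{\btheta}$ and conjugate by $e^{i\phi_{\btheta}}$. Gauge invariance of the localized estimates then yields the same constants. Since the estimate is assembled from finitely many charts with a partition of unity whose error terms are $O(1)$ uniformly in $\btheta$ (they involve only $d\chi$), we get $\lambda_0(\btheta,\bk)\geq (F_{\min}/2-\eps)|\bk| - C$, and $C$ is absorbed by enlarging $R$.

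\emph{Main obstacle.} The delicate point is uniformity in $\btheta$ over all of $\mathrm{U}(1)^d$. I would handle it by the local gauge argument above rather than by a global trivialization, which does not exist as a smooth family by \cite[Remark~2.7]{Cekic-Lefeuvre-Munoz-Thon-26}. Concretely, the IMS localization formula
\[
\Delta_{\btheta,\bk} = \sum_\alpha \chi_\alpha \Delta_{\btheta,\bk}\chi_\alpha - \sum_\alpha |d\chi_\alpha|^2
\]
reduces everything to charts where $e^{-i\phi_{\btheta}}\Delta_{\btheta,\bk}e^{i\phi_{\btheta}}=\Delta_{\bk}$ on $\operatorname{supp}\chi_\alpha$. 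There the local estimate of \cite{Cekic-Lefeuvre-24} applies verbatim. For that, one must check that its proof is indeed local on $M_0$ (a microlocal partition in the base). I expect this to hold since its input is the pointwise quantity $F_{\min}$. This yields the theorem with $R$ independent of $\btheta$.
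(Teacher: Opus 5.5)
Your proposal is correct, and it takes a genuinely different route from the paper.

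\textbf{How the paper argues.} The paper never uses the untwisted theorem of \cite{Cekic-Lefeuvre-24}; it reproves the mechanism directly for the twisted family.
\begin{itemize}
\item Lemma \ref{lemma:identity} gives the commutator identity $2\Im\langle \mathbf{X}_{\btheta,\bk}u,\chi\mathbf{Y}_{\btheta,\bk}u\rangle=-i\langle \bk\cdot\mathbf{F}_{\overline{\nabla}}(X^{\HH_{F_0}},Y^{\HH_{F_0}})u,\chi u\rangle+\mathcal{O}(|\bk|^{1/2})$, uniformly in $\btheta$. Uniformity comes from $d\eta_{\btheta}=0$, which leaves the curvature unchanged, and from fiberwise holomorphy, which controls the vertical Chern term.
\item A partition of unity on $F_0$ with locally curvature-maximizing fields $X_j,Y_j$, together with Cauchy--Schwarz, then gives Proposition \ref{prop:open-bound}.
\item Compactness of $\partial_\infty\mathfrak{a}_+$ finishes the proof.
\end{itemize}

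\textbf{How you argue.} You take the $\btheta=\mathbf{0}$ bound as a black box and transfer it, using IMS localization on $M_0$ and the local gauge triviality of the flat bundle $L_{\btheta}$. This yields $\lambda_0(\btheta,\bk)\geq\lambda_0(\mathbf{0},\bk)-C$ for all $\btheta,\bk$.

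\textbf{What each route buys.} Yours is modular and shows that uniformity in $\btheta$ is essentially free. Any untwisted lower bound that grows in $|\bk|$ transfers verbatim; for instance, the Helffer--Kordyukov input in \S\ref{subsection:G=U(1)} would need no ``twisted version''. The paper's route is self-contained and does not depend on the precise form of the cited theorem.

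\textbf{Points to tighten.}

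First, the locality check you flag is unnecessary. Take $\sum_\alpha\chi_\alpha^2=1$ with $\chi_\alpha$ pulled back from $M_0$, so fiberwise holomorphy is preserved. Write $\chi_\alpha u=\sigma_\alpha\otimes v_\alpha$, where $\sigma_\alpha$ is a parallel unit section of $L_{\btheta}$ over $V_\alpha$, and extend $v_\alpha$ by zero. Then:
\begin{itemize}
\item $v_\alpha\in C^\infty_{\hol}(F_0,\Lk)$ is a global section;
\item $\|\nabla_{\btheta,\bk}(\chi_\alpha u)\|=\|\nabla_{\bk}^{\HH_{F_0}}v_\alpha\|$;
\item the global inequality $\|\nabla_{\bk}^{\HH_{F_0}}v_\alpha\|^2\geq\lambda_0(\mathbf{0},\bk)\|v_\alpha\|^2$ applies directly.
\end{itemize}
Summing via the IMS identity gives the bound with $C=\sup_{M_0}\sum_\alpha|d\chi_\alpha|^2$, which is independent of $\btheta$.

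Second, a harmless sign. With the paper's convention $\nabla_{\btheta,\bk}=\nabla_{\bk}^{\HH_{F_0}}+ip_{F_0}^*\eta_{\btheta}$ and $\eta_{\btheta}=d\phi_{\btheta}$, the conjugation is $e^{i\phi_{\btheta}}\Delta_{\btheta,\bk}e^{-i\phi_{\btheta}}=\Delta_{\bk}$; yours is flipped.

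Third, Step 2 should be dropped. The ``subprincipal perturbation'' heuristic would not suffice on its own: for $h^2\Delta_{\btheta,\bk}$ the gap is itself of size $h$, the same order as a subprincipal term. The description of the proof in \cite{Cekic-Lefeuvre-24} as a harmonic-oscillator/sharp G\aa rding argument is also not needed anywhere.

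An even shorter variant uses the uniformly bounded representatives $\eta_{\btheta}$ you already mention. From $\|\nabla_{\btheta,\bk}u\|\geq\|\nabla_{\bk}^{\HH_{F_0}}u\|-\sup_{\btheta}\|\eta_{\btheta}\|_{L^\infty}\|u\|$ one gets $\lambda_0(\btheta,\bk)^{1/2}\geq\lambda_0(\mathbf{0},\bk)^{1/2}-C$. The resulting $\mathcal{O}(|\bk|^{1/2})$ loss is absorbed into $\eps|\bk|$. This is exactly the Young-inequality argument the paper itself uses in \S\ref{subsection:G=U(1)}.
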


The difference with \cite[Theorem~5.1.1(i)]{Cekic-Lefeuvre-24} is that now we need a uniform estimate in $\btheta$, which does not follow directly from the cited result. We claim that Theorem \ref{theorem:twisted-CL} follows from the following result.

\begin{proposition} \label{prop:open-bound}
    Let $\mathbf{l} \in \partial_{\infty}\mathfrak{a}_{+}$ be such that $F_{\min}(\mathbf{l})>0$. Then, for all $\eps>0$, there exists $R>0$ such that
    \[
    \lambda_{0}(\btheta,\bk)>\left( \frac{F_{\min}(\mathbf{l})}{2}-\eps \right) |\bk|, \quad \forall \btheta \in \mathrm{U}(1)^{d},\, \forall \bk \in \widehat{G}, \, |\bk|>R \text{ with } \left| \frac{\bk}{|\bk|}-\mathbf{l} \right|<\frac{1}{R}.
    \]
\end{proposition}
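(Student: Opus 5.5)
The plan is to reduce Proposition \ref{prop:open-bound} to a pointwise lower bound on the quadratic form of the twisted operator, following the strategy of \cite[Theorem~5.1.1(i)]{Cekic-Lefeuvre-24}. Two structural facts drive it. First, the twisting by $p_{F_{0}}^{*}\eta_{\btheta}$ is flat, since $\eta_{\btheta}$ is closed. Second, it is bounded uniformly in $\btheta$, and this is where a careful choice of representatives is needed. We cannot choose a single smooth family over all of $\mathrm{U}(1)^d$. Instead, for $\btheta$ in each good ball $U_i$ of a finite cover we take the representative $\eta_{\btheta,i}$ given by Proposition \ref{prop:eta-theta}(iii). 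By \eqref{eq:conjugated}, changing the representative only conjugates $\Delta_{\btheta,\bk}$ by a unitary multiplication operator, so $\lambda_0(\btheta,\bk)$ does not depend on this choice. After shrinking the balls, we obtain a uniform bound $\sup_{\btheta}\|\eta_{\btheta,i}\|_{C^1}\le C_0$.

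Next we write $\nabla_{\btheta,\bk}=\nabla_{\bk}^{\HH_{F_{0}}}+ip_{F_{0}}^{*}\eta_{\btheta}$, a partial horizontal connection on $\Lk$ restricted to fiberwise holomorphic sections. Its horizontal curvature is $F_{\overline{\nabla}_{\bk}}|_{\HH}+i\,p_{F_0}^*d\eta_{\btheta}=\bk\cdot\mathbf{F}_{\overline{\nabla}}|_{\HH}$, because $d\eta_{\btheta}=0$. The vertical part of the connection, the Chern connection $D^{\mathrm{Chern}}_{\bk}$, is unchanged by the twist. We then run the Bochner/Weitzenböck-type argument of \cite{Cekic-Lefeuvre-24} in the Borel--Weil calculus at semiclassical scale $h=1/|\bk|$. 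Locally we choose an orthonormal horizontal frame $X_1,\dots,X_n$ at a point and introduce creation and annihilation combinations $X_{2j-1}\pm iX_{2j}$ adapted to the antisymmetric form $-i\,\mathbf{l}\cdot\mathbf{F}_{\overline{\nabla}}(X^{\HH},Y^{\HH})$. This gives
\[
\langle \Delta_{\btheta,\bk}u,u\rangle \ge |\bk|\int_{F_0}\Big(\max_{|X|=|Y|=1}-i\,\tfrac{\bk}{|\bk|}\cdot\mathbf{F}_{\overline{\nabla}}(X^{\HH},Y^{\HH})\Big)|u|^2 - C\big(1+|\bk|^{1/2}\big)\|u\|^2 ,
\]
where the error collects commutators of the frame and the contributions from the vertical directions along holomorphic sections. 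Equivalently, we take the principal symbol of $h^2\Delta_{\btheta,\bk}$ and apply the sharp Gårding / harmonic-oscillator lower bound: the symbol of $h\nabla_{\btheta,\bk}$ is $\xi+h\,\eta_{\btheta}+(\text{connection terms})$, so the twist enters only at subprincipal order $O(h)$, with constants depending on $\sup_{\btheta}\|\eta_{\btheta}\|_{C^1}\le C_0$.

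Finally, the factor $1/2$ and the $\eps$ come from the half-sum-of-eigenvalues bound for the Landau-type model: the lowest level of $\sum_j(X_j)^2$ with magnetic form of norm $F$ is at least $F/2$ at leading order. Since $F_{\min}(\mathbf{l})>0$, the map $\mathbf{l}'\mapsto F_{\min}(\mathbf{l}')$ is continuous, so for $|\bk/|\bk|-\mathbf{l}|<1/R$ the pointwise weight is at least $F_{\min}(\mathbf{l})-\eps/2$. Taking $R$ large then absorbs the $O(|\bk|^{1/2})$ error into $\eps|\bk|$. The main obstacle is to verify that every constant in the Borel--Weil semiclassical estimates of \cite{Cekic-Lefeuvre-24} (Gårding, commutator and Toeplitz remainders) depends only on finitely many seminorms of the first-order perturbation. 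Those seminorms are uniformly bounded in $\btheta$ thanks to the finite cover and the compactness of $\overline{U_i}$, so I would track this dependence explicitly through the sharp Gårding step.
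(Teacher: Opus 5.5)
Your argument is correct in substance, and its core is the paper's. The two facts you isolate are exactly what Lemma~\ref{lemma:identity} rests on: the horizontal curvature of $\nabla_{\btheta,\bk}$ is the $\btheta$-independent form $\bk\cdot\mathbf{F}_{\overline{\nabla}}$ because $d\eta_{\btheta}=0$, and the vertical Chern part is untouched by the twist. Your creation/annihilation step is also the paper's inequality, applied pair by pair. Indeed, $\int\chi\,|(\mathbf{X}_{\btheta,\bk}\pm i\mathbf{Y}_{\btheta,\bk})u|^2\geq 0$ is precisely $2|\Im\langle \mathbf{X}_{\btheta,\bk}u,\chi\mathbf{Y}_{\btheta,\bk}u\rangle|\leq\int\chi(|\mathbf{X}_{\btheta,\bk}u|^2+|\mathbf{Y}_{\btheta,\bk}u|^2)$.

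Where you diverge is the uniformity in $\btheta$, and there the paper's route is much lighter. First reduce to $\langle\Delta_{\btheta,\bk}u,u\rangle\leq C|\bk|$; otherwise there is nothing to prove. After that, every error term in the commutator identity is of one of two kinds:
\begin{itemize}
\item a term $\langle\iota_Z\nabla_{\btheta,\bk}u,fu\rangle=\mathcal{O}(|\bk|^{1/2})$, controlled by the twisted connection itself;
\item the vertical term $\langle(D^{\mathrm{Chern}}_{\bk})_Zu,\chi u\rangle$, which is $\mathcal{O}(1)$ after integrating $\partial_{\bk}$ by parts onto $\overline{\partial}_{\bk}$ and using $\overline{\partial}_{\bk}u=0$.
\end{itemize}
No representative of $\eta_{\btheta}$, no bound on it, and no Borel--Weil symbol calculus enter. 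So the ``main obstacle'' you describe never arises. The holomorphy argument for the vertical term is the one step your sketch asserts without justification. It is where the restriction to fiberwise holomorphic sections is actually used.

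Three points of your framing need correcting.
\begin{itemize}
\item Sharp G\aa rding cannot be the engine. The symbol $|\xi|^2$ of $h^2\Delta_{\btheta,\bk}$ is nonnegative but vanishes on the zero section of $\HH_{F_0}^*$, so it only gives $\langle h^2\Delta_{\btheta,\bk}u,u\rangle\geq -Ch\|u\|^2$. The positive term of order $h$ is purely a commutator (curvature) effect.
\item If you want to exploit your remark that the twist is $\mathcal{O}(h)$, do it with Young's inequality on your finite cover by good sets:
$\|\nabla_{\btheta,\bk}u\|^2\geq(1-\delta)\|\nabla_{\bk}^{\HH_{F_0}}u\|^2-(\delta^{-1}-1)\sup_{\btheta}\|\eta_{\btheta}\|_{L^\infty}^2\|u\|^2$.
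Together with the gauge invariance of $\lambda_0(\btheta,\bk)$, this reduces the statement to its untwisted counterpart in \cite{Cekic-Lefeuvre-24}, with no constants to track (compare \S\ref{subsection:G=U(1)}). That route gives a very short proof. The paper's argument is self-contained and needs no control of $\eta_{\btheta}$ at all.
\item The factor $1/2$ is not a Landau-level phenomenon. A single pair $(X,Y)$ already yields the full weight $F_{\min}(\mathbf{l})$, as your own displayed inequality shows. The $1/2$ in the statement is just slack from the paper's cruder bound $2|\Im\langle\mathbf{X}_{\btheta,\bk}u,\chi\mathbf{Y}_{\btheta,\bk}u\rangle|\leq 2\int\chi|\nabla_{\btheta,\bk}u|^2$.
\end{itemize}
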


\begin{proof}[Proof of Theorem \ref{theorem:twisted-CL}]
The nondegeneracy assumption gives $F_{\min}(\mathbf{l})>0$ for any $\mathbf{l} \in \partial_{\infty}\mathfrak{a}_{+}$. By compactness, this gives $F_{\min}>0$. By compactness and Proposition \ref{prop:open-bound}, there exist $\mathbf{l}_{1},\ldots,\mathbf{l}_{N} \in \partial_{\infty}\mathfrak{a}_{+}$, and $R_{\mathbf{l}_{1}},\ldots,R_{\mathbf{l}_{N}}$ (using the same fixed $\eps$), which we can assume bigger than 1, such that 
\[ \partial_{\infty}\mathfrak{a}_{+}=\bigcup_{j=1}^{N}B \left( \mathbf{l}_{j},\frac{1}{R_{\mathbf{l}_{j}}} \right). 
\]
Let $R$ be the maximum of such $R_{\mathbf{l}_{j}}$. We claim that this $R$ satisfies the conditions of the theorem. Indeed, take $\bk \in \widehat{G}$ with $|\bk|>R$. Since $\bk/|\bk| \in \mathbb{S}^{d-1} \cap (\R^{a} \times \R_{+}^{b}) \simeq \partial_{\infty}\mathfrak{a}_{+}$, there exists $\mathbf{l}_{j}$ such that $\bk/|\bk| \in B(\mathbf{l}_{j},1/R_{\mathbf{l}_{j}})$. Hence, for any $\btheta \in \mathrm{U}(1)^{d}$ we have
\[
\lambda_{0}(\btheta,\bk)>\left( \frac{F_{\min}(\mathbf{l}_{j})}{2}-\eps \right)|\bk| \geq \left( \frac{F_{\min}}{2}-\eps \right)|\bk|.
\]    
\end{proof}

Now we deal with Proposition \ref{prop:open-bound}. First, we will need the following identity.

\begin{lemma} \label{lemma:identity}
Fix $C>0$, $\chi \in C^{\infty}(F_{0},[0,1])$. Let $X^{\mathbb{H}_{F_{0}}}, Y^{\mathbb{H}_{F_{0}}} \in C^{\infty}(F_{0}, \mathbb{H}_{F_{0}})$ be horizontal basic vector fields such that $|X|,|Y| \leq 1$ on $M_{0}$, and set $\mathbf{X}_{\btheta,\bk}:= \iota_{X^{\mathbb{H}_{F_{0}}}} \nabla_{\btheta,\bk}, \mathbf{Y}_{\btheta,\bk}=\iota_{Y^{\mathbb{H}_{F_{0}}}} \nabla_{\btheta,\bk}$. Consider a family of sections $u_{\btheta,\bk} \in C_{\hol}^{\infty}(F_{0},p_{F_{0}}^{*}L_{\btheta}\otimes \Lk)$ such that
\begin{equation} \label{eq:normalization}
    \langle\Delta_{\btheta,\bk} u_{\btheta,\bk}, u_{\btheta,\bk}\rangle_{L^2} \leq C|\bk|, \quad \|u_{\btheta,\bk}\|_{L^2}=1 .
\end{equation}
Then the following equality holds:
\begin{equation} \label{eq:control-identity}
    2 \Im \langle\mathbf{X}_{\btheta,\bk} u_{\btheta,\bk}, \chi \mathbf{Y}_{\btheta,\bk} u_{\btheta,\bk}\rangle_{L^2}=-i\langle \bk \cdot \mathbf{F}_{\overline{\nabla}} (X^{\mathbb{H}_{F_{0}}}, Y^{\mathbb{H}_{F_{0}}}) u_{\btheta,\bk}, \chi u_{\btheta,\bk}\rangle_{L^2}+\mathcal{O}_{C, X, Y, \chi}(|\bk|^{1 / 2}),
\end{equation}
where the remainder is uniform in $\btheta$.
\end{lemma}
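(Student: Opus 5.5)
The plan is to follow the argument of \cite[Lemma~5.1.4]{Cekic-Lefeuvre-24}: integrate by parts to turn $\langle \mathbf{X}u,\chi\mathbf{Y}u\rangle - \langle \mathbf{Y}u,\chi\mathbf{X}u\rangle$ into a commutator, and identify that commutator with the curvature of the twisted connection $\nabla_{\btheta,\bk}$, keeping track of uniformity in $\btheta$. Write $u=u_{\btheta,\bk}$. The twisted connection is $\nabla_{\btheta,\bk}=\nabla_{\bk}^{\HH_{F_{0}}}+ip_{F_{0}}^{*}\eta_{\btheta}$, i.e.\ the horizontal part of $\overline{\nabla}_{\bk}\otimes\nabla^{L_{\btheta}}$. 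Since $\eta_{\btheta}$ is closed, $L_{\btheta}$ is flat. Hence the curvature of the tensor connection along horizontal directions equals $F_{\overline{\nabla}_{\bk}}=\bk\cdot\mathbf{F}_{\overline{\nabla}}$ plus $i\,p_{F_{0}}^{*}d\eta_{\btheta}=0$. This is the point where only the curvature of $\nabla^{P_{0}}$ enters.

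First I would write
\[
2i\Im\langle \mathbf{X}u,\chi\mathbf{Y}u\rangle = \langle \mathbf{X}u,\chi\mathbf{Y}u\rangle-\langle \chi\mathbf{Y}u,\mathbf{X}u\rangle
\]
and move $\mathbf{X}$ and $\mathbf{Y}$ to the other side using their formal adjoints. Since $\eta_{\btheta}$ is real, $\mathbf{X}^{*}=-\mathbf{X}-\operatorname{div}(X^{\HH_{F_{0}}})$. This gives
\[
\langle [\mathbf{Y},\mathbf{X}]u,\chi u\rangle+\langle (Y\chi)\,\mathbf{X}u,u\rangle+(\text{zeroth order divergence terms})\cdot\langle \mathbf{X}u\text{ or }\mathbf{Y}u, u\rangle .
\]
By \eqref{eq:normalization} and $|X|,|Y|\le 1$, $\|\mathbf{X}u\|_{L^2},\|\mathbf{Y}u\|_{L^2}\le \langle\Delta_{\btheta,\bk}u,u\rangle^{1/2}\le C|\bk|^{1/2}$. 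Hence every term except the commutator is $\mathcal{O}(|\bk|^{1/2})$, with constants depending only on $C$, $\chi$, $X$, $Y$ (through $\sup|d\chi|$ and the divergences), and not on $\btheta$.

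Next I would compute the commutator:
\[
[\mathbf{X},\mathbf{Y}]=F_{\btheta,\bk}(X^{\HH},Y^{\HH})+\iota_{[X^{\HH},Y^{\HH}]}(\text{full connection}).
\]
Here $F_{\btheta,\bk}=\bk\cdot\mathbf{F}_{\overline{\nabla}}$ by flatness of $L_{\btheta}$ and \eqref{eq:bar-curvature}. The bracket $[X^{\HH},Y^{\HH}]$ splits as $[X,Y]^{\HH}$ plus a vertical part given by the curvature of $F_0\to M_0$. The horizontal piece contributes $\langle\iota_{[X,Y]^{\HH}}\nabla_{\btheta,\bk}u,\chi u\rangle=\mathcal{O}(|\bk|^{1/2})$ by the same Cauchy--Schwarz bound. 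The vertical piece acts through the Chern connection $D^{\mathrm{Chern}}_{\bk}$ (the $\eta_{\btheta}$ term has no vertical component, being pulled back from $M_0$).

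The main obstacle is controlling the vertical derivative of $u$, which is not a priori bounded by $\Delta_{\btheta,\bk}$. As in \cite{Cekic-Lefeuvre-24}, I would use fiberwise holomorphy: $D^{\mathrm{Chern}}_{Z}u=(\partial_{\bk})_{Z^{1,0}}u$. On holomorphic sections of $\Lk$ over $G/T$, the vertical first-order operator is a Toeplitz-type operator of order $|\bk|$. In \cite{Cekic-Lefeuvre-24} this term is combined with the horizontal curvature so that together they produce exactly $-i\bk\cdot\mathbf{F}_{\overline{\nabla}}(X^{\HH},Y^{\HH})$ on the holomorphic subspace, modulo $\mathcal{O}(|\bk|^{1/2})$ (their curvature $\mathbf{F}_{\overline{\nabla}}$ already accounts for the vertical contribution). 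Since the $\btheta$-twist is a multiplication by the flat line bundle, commutes with the vertical structure, and preserves fiberwise holomorphy, that computation transfers verbatim, uniformly in $\btheta$. Collecting the terms and dividing by $i$ gives \eqref{eq:control-identity}.
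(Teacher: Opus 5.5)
Your overall scheme matches the paper: integrate by parts to reach a commutator, use $d\eta_{\btheta}=0$ so the horizontal curvature is $\bk\cdot\mathbf{F}_{\overline{\nabla}}(X^{\HH_{F_0}},Y^{\HH_{F_0}})$, split $[X^{\HH_{F_0}},Y^{\HH_{F_0}}]=[X,Y]^{\HH_{F_0}}+[X^{\HH_{F_0}},Y^{\HH_{F_0}}]^{\V_{F_0}}$, and bound every term containing a single $\iota_Z\nabla_{\btheta,\bk}u$ by $C^{1/2}|\bk|^{1/2}$ uniformly in $\btheta$. There is a small sign slip. With $\mathbf{X}^*=-\mathbf{X}-\mathrm{div}(X^{\HH_{F_0}})$ one gets $2i\Im\langle\mathbf{X}u,\chi\mathbf{Y}u\rangle=\langle[\mathbf{X},\mathbf{Y}]u,\chi u\rangle+\mathcal{O}(|\bk|^{1/2})$, not $[\mathbf{Y},\mathbf{X}]$. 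With your sign, dividing by $i$ would give $+i\langle\bk\cdot\mathbf{F}_{\overline{\nabla}}(\dots)u,\chi u\rangle$ instead of the stated $-i$.

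The real gap is the vertical term. By your own commutator formula, the curvature term is already exactly $\bk\cdot\mathbf{F}_{\overline{\nabla}}(X^{\HH_{F_0}},Y^{\HH_{F_0}})$. So the lemma holds precisely when $\langle (D^{\mathrm{Chern}}_{\bk})_Z u,\chi u\rangle=\mathcal{O}(|\bk|^{1/2})$ for $Z=[X^{\HH_{F_0}},Y^{\HH_{F_0}}]^{\V_{F_0}}$. You claim instead that this term is of order $|\bk|$ and gets ``combined with the horizontal curvature'' to yield $\bk\cdot\mathbf{F}_{\overline{\nabla}}$. That would double count, and it is not what happens in \cite{Cekic-Lefeuvre-24} or here. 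Saying the computation ``transfers verbatim'' leaves the decisive step unproved.

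The missing idea is a one-line integration by parts. Since $\overline{\partial}_{\bk}u=0$ and the $p_{F_0}^*L_{\btheta}$ factor is trivial along the fibers, $(D^{\mathrm{Chern}}_{\bk})_Z u=(\partial_{\bk})_{Z^{1,0}}u$. Moving $(\partial_{\bk})_{Z^{1,0}}$ onto $\chi u$ produces $-(\overline{\partial}_{\bk})_{Z^{0,1}}-\mathrm{div}Z^{0,1}$, which annihilates $u$ except where it falls on $\chi$. Hence $\langle (D^{\mathrm{Chern}}_{\bk})_Z u,\chi u\rangle=-\langle u,(Z^{0,1}\chi+\chi\,\mathrm{div}Z^{0,1})u\rangle=\mathcal{O}(1)$, uniformly in $\btheta$.

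In Toeplitz language, the compression of the vertical Chern derivative to fiberwise holomorphic sections is bounded, not of order $|\bk|$. The order-$|\bk|$ part of the vertical action is the moment-map multiplication, and that is exactly what $\bk\cdot\mathbf{F}_{\overline{\nabla}}(X^{\HH_{F_0}},Y^{\HH_{F_0}})$ already encodes.
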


\begin{proof}
We will begin by deriving certain identities. Since $\eta_{\btheta}$ is closed, we obtain that $F_{\overline{\nabla}_{\btheta,\bk}}=F_{\overline{\nabla}_{\bk}}+id(p_{F_{0}}^{*}\eta_{\btheta})=\bk \cdot \mathbf{F}_{\overline{\nabla}}$, where $\overline{\nabla}_{\btheta,\bk}=D_{\bk}^{\mathrm{Chern}}+\nabla_{\btheta,\bk}$ (where we used that $p_{F_{0}}^{*}\eta_{\btheta}$ vanishes on vertical vectors, so that $D_{\bk}^{\mathrm{Chern}}$ is the same as in the untwisted case), $\nabla_{\btheta,\bk}$ was introduced in \eqref{eq:operator-floquet-borel-weil}, while $\overline{\nabla}_{\bk}$ and $\bk \cdot \mathbf{F}_{\overline{\nabla}}$ were recalled in \eqref{eq:globa-connection} and \eqref{eq:bar-curvature}, respectively. Then, 
\begin{equation} \label{eq:commutation}
    \mathbf{X}_{\btheta,\bk}\mathbf{Y}_{\btheta,\bk}-\mathbf{Y}_{\btheta,\bk}\mathbf{X}_{\btheta,\bk}=\bk \cdot \mathbf{F}_{\overline{\nabla}}(X^{\mathbb{H}_{F_{0}}},Y^{\mathbb{H}_{F_{0}}})+(\overline{\nabla}_{\btheta,\bk})_{[X^{\mathbb{H}_{F_{0}}},Y^{\mathbb{H}_{F_{0}}}]}.
\end{equation}

On the other hand, by decomposing into horizontal and vertical parts $[X^{\mathbb{H}_{F_{0}}},Y^{\mathbb{H}_{F_{0}}}]=[X,Y]^{\mathbb{H}_{F_{0}}}+[X^{\mathbb{H}_{F_{0}}},Y^{\mathbb{H}_{F_{0}}}]^{\mathbb{V}_{F_{0}}}$, we have
\begin{equation} \label{eq:nabla-horizotal-decomposition}
    (\overline{\nabla}_{\btheta,\bk})_{[X^{\mathbb{H}_{F_{0}}},Y^{\mathbb{H}_{F_{0}}}]}=(\nabla_{\btheta,\bk})_{[X,Y]^{\mathbb{H}_{F_{0}}}}+(D_{\bk}^{\mathrm{Chern}})_{[X^{\mathbb{H}_{F_{0}}},Y^{\mathbb{H}_{F_{0}}}]^{\mathbb{V}_{F_{0}}}},
\end{equation}
where we used that $p_{F_{0}}^{*}\eta_{\btheta}$ vanishes on vertical vectors, so that $D_{\bk}^{\mathrm{Chern}}$ is the same as in the untwisted case, see \eqref{eq:chern} for its definition. Hence, \eqref{eq:commutation} and \eqref{eq:nabla-horizotal-decomposition} give
\[
\mathbf{X}_{\btheta,\bk}\mathbf{Y}_{\btheta,\bk}-\mathbf{Y}_{\btheta,\bk}\mathbf{X}_{\btheta,\bk}=\bk \cdot \mathbf{F}_{\overline{\nabla}}(X^{\mathbb{H}_{F_{0}}},Y^{\mathbb{H}_{F_{0}}})+(\nabla_{\btheta,\bk})_{[X,Y]^{\mathbb{H}_{F_{0}}}}+(D_{\bk}^{\mathrm{Chern}})_{[X^{\mathbb{H}_{F_{0}}},Y^{\mathbb{H}_{F_{0}}}]^{\mathbb{V}_{F_{0}}}}.
\]
Applying this to $u_{\btheta,\bk}$, and pairing with $\chi u_{\btheta,\bk}$, we obtain
\begin{equation}
    \label{eq:commutation2}
    \begin{aligned}
    \langle \mathbf{X}_{\btheta,\bk}&\mathbf{Y}_{\btheta,\bk}u_{\btheta,\bk},\chi u_{\btheta,\bk}\rangle_{L^{2}}-\langle \mathbf{Y}_{\btheta,\bk}\mathbf{X}_{\btheta,\bk} u_{\btheta,\bk},\chi u_{\btheta,\bk} \rangle_{L^{2}}\\
    =&\langle \bk \cdot \mathbf{F}_{\overline{\nabla}} (X^{\mathbb{H}_{F_{0}}},Y^{\mathbb{H}_{F_{0}}})u_{\btheta,\bk},\chi u_{\btheta,\bk}\rangle_{L^{2}}+\langle(\nabla_{\btheta,\bk})_{[X,Y]^{\mathbb{H}_{F_{0}}}}u_{\btheta,\bk},\chi u_{\btheta,\bk} \rangle_{L^{2}} \\
    &+\langle (D_{\bk}^{\mathrm{Chern}})_{[X^{\mathbb{H}_{F_{0}}},Y^{\mathbb{H}_{F_{0}}}]^{\mathbb{V}_{F_{0}}}}u_{\btheta,\bk},\chi u_{\btheta,\bk} \rangle_{L^{2}}.
    \end{aligned}
\end{equation}
This will be our main identity. To obtain \eqref{eq:control-identity}, we will bound certain terms that appear in \eqref{eq:commutation2}.

First, since $\nabla_{\btheta,\bk}$ is a Hermitian connection, we have that the (formal) adjoint of $\mathbf{X}_{\btheta,\bk}$ satisfies $\mathbf{X}_{\btheta,\bk}^{*}=-\mathbf{X}_{\btheta,\bk}-\mathrm{div}(X^{\mathbb{H}_{F_{0}}})$. Then, the left hand side of \eqref{eq:commutation2} is
\begin{equation}
    \label{eq:commutation3}
    \begin{aligned}
    &\langle \mathbf{X}_{\btheta,\bk} u_{\btheta,\bk},\chi \mathbf{Y}_{\btheta,\bk}u_{\btheta,\bk} \rangle_{L^{2}}-\langle \mathbf{Y}_{\btheta,\bk}u_{\btheta,\bk},\chi \mathbf{X}_{\btheta,\bk}u_{\btheta,\bk} \rangle_{L^{2}}\\
    &+\langle \mathbf{X}_{\btheta,\bk}u_{\btheta,\bk},(Y^{\mathbb{H}_{F_{0}}}\chi)u_{\btheta,\bk} \rangle_{L^{2}}-\langle \mathbf{Y}_{\btheta,\bk} u_{\btheta,\bk},(X^{\mathbb{H}_{F_{0}}}\chi) u_{\btheta,\bk} \rangle_{L^{2}} \\
    &+\langle \mathbf{X}_{\btheta,\bk}u_{\btheta,\bk},\mathrm{div}(Y^{\mathbb{H}_{F_{0}}})\chi u_{\btheta,\bk} \rangle_{L^{2}}-\langle \mathbf{Y}_{\btheta,\bk} u_{\btheta,\bk},\mathrm{div}(X^{\mathbb{H}_{F_{0}}})\chi u_{\btheta,\bk} \rangle_{L^{2}}.
    \end{aligned}
\end{equation}
The first two terms give the left-hand side of \eqref{eq:control-identity}.

Now we proceed to bound the last four terms in \eqref{eq:commutation3}, together with the middle term on the right-hand side of \eqref{eq:commutation2}. Using \eqref{eq:normalization} and Cauchy--Schwarz inequality, we obtain that for any $Z \in C^{\infty}(F_{0},\mathbb{H}_{F_{0}})$ and $f \in C^{\infty}(F_{0})$, 
\[ 
|\langle \iota_{Z}\nabla_{\btheta,\bk}u_{\btheta,\bk},fu_{\btheta,\bk} \rangle_{L^{2}}|\leq C^{1/2} \|Z\|_{L^{\infty}} \|f\|_{L^{\infty}} |\bk|^{1/2}.
\]
Since all the aforementioned terms are as in the left-hand side of this inequality, we obtain that they are $\mathcal{O}(|\bk|^{1/2})$. 

Finally, we have to deal with the last term on the right-hand side of \eqref{eq:commutation2}. First, observe that since $u_{\btheta,\bk}$ is holomorphic, we obtain $\overline{\partial}_{\bk}u_{\btheta,\bk}=0$. Furthermore, since the term in $p_{F_{0}}^{*}L_{\btheta}$ is the pullback from the base, it is trivial in vertical directions. Hence, $(D_{\bk}^{\mathrm{Chern}})_{Z}u_{\btheta,\bk}=(\partial_{\bk})_{Z^{1,0}}u_{\btheta,\bk}$. Integrating this against $\chi u_{\btheta,\bk}$, and using $((\partial_{\bk})_{Z^{1,0}})^{*}=-(\overline{\partial}_{\bk})_{Z^{0,1}}-\mathrm{div}Z^{0,1}$ together with \eqref{eq:normalization} and the fact that $(\overline{\partial}_{\bk})_{Z^{0,1}}u_{\btheta,\bk}=0$ (since the section is holomorphic), we obtain
\[
\langle (D_{\bk}^{\mathrm{Chern}})_{Z}u_{\btheta,\bk},\chi u_{\btheta,\bk}\rangle_{L^{2}}=-\langle u_{\btheta,\bk},(Z^{0,1}\chi+\chi\mathrm{div}Z^{0,1})u_{\btheta,\bk}\rangle_{L^{2}}=\mathcal{O}(1),
\]
finishing the proof.    
\end{proof}

\begin{proof}[Proof of Proposition \ref{prop:open-bound}]
We will prove that for any $\eps>0$, there exists $R>0$ such that for all $\btheta \in \mathrm{U}(1)^{d}$ and all $\bk \in \widehat{G}$ with $|\bk|>R$ and $|\bk/|\bk|-\mathbf{l}|<1/R$ and all $u_{\btheta,\bk} \in C_{\hol}^{\infty}(F_{0},p_{F_{0}}^{*}L_{\btheta} \otimes \Lk)$, 
\[ 
\|\nabla_{\btheta,\bk}u_{\btheta,\bk}\|_{L^{2}}^{2}\geq \left( \frac{F_{\min}(\mathbf{l})}{2}-\eps \right)|\bk| \|u_{\btheta,\bk}\|_{L^{2}}^{2}.
\]
By homogeneity we may assume $\|u_{\btheta,\bk}\|_{L^{2}}=1$. Also, note that if $\|\nabla_{\btheta,\bk}u_{\theta,\bk}\|^{2}>C|\bk|$, there is nothing to prove, so we may assume the hypotheses of Lemma \ref{lemma:identity}.

By definition of $F_{\min}(\mathbf{l})$, we have that for all $w_{0} \in F_{0}$, there exists an open neighborhood $U$ of $w_{0}$, and $X^{\mathbb{H}_{F_{0}}},Y^{\mathbb{H}_{F_{0}}}\in C^{\infty}(U,\mathbb{H}_{F_{0}})$ basic horizontal vector fields such that $|X^{\mathbb{H}_{F_{0}}}|=|Y^{\mathbb{H}_{F_{0}}}|=1$ pointwise, and $R_{w_{0}}>0$ such that:
\begin{equation} \label{eq:local-bound}
    \frac{-i}{|\bk|}\bk \cdot \mathbf{F}_{\overline{\nabla}}(w)(X^{\mathbb{H}_{F_{0}}},Y^{\mathbb{H}_{F_{0}}})>F_{\min}(\mathbf{l})-\eps, \quad \forall \bk \in \widehat{G}, \, |\bk|>R, \, \left| \frac{\bk}{|\bk|}-\mathbf{l} \right|<\frac{1}{R_{w_{0}}}, \, \forall w \in U.
\end{equation}
By compactness of $F_{0}$, there exists a finite cover $F_{0}=\cup_{j=1}^{N}U_{w_{j}}$, where each $U_{w_{j}}$ is an open set as in \eqref{eq:local-bound}. Let $R=\max R_{j}$ where $R_{j}$ correspond to $U_{w_{j}}$, denote by $X_{j}^{\mathbb{H}_{F_{0}}}, Y_{j}^{\mathbb{H}_{F_{0}}}$ the corresponding vector fields, and let $\sum_{j=1}^{N}\chi_{j}=1$ be a partition of unity subordinated to the sets $U_{w_{j}}$, with $\chi_{j} \geq 0$. Let us write $\mathbf{X}_{j,\btheta,\bk}=(\nabla_{\btheta,\bk})_{X_{j}^{\mathbb{H}_{F_{0}}}}$ and $\mathbf{Y}_{j,\btheta,\bk}=(\nabla_{\btheta,\bk})_{Y_{j}^{\mathbb{H}_{F_{0}}}}$. Now let $u_{\btheta,\bk}$ such that \eqref{eq:normalization} holds. Then, using \eqref{eq:control-identity} and \eqref{eq:local-bound}, we find that
\[
\begin{split}
    \sum_{j=1}^{N}\frac{2}{|\bk|}&\Im \langle \mathbf{X}_{j,\btheta,\bk}u_{\btheta,\bk},\chi_{j}\mathbf{Y}_{j,\btheta,\bk}u_{\btheta,\bk}\rangle_{L^{2}} \\
    =&\sum_{j=1}^{N}\frac{-i}{|\bk|} \langle \bk \cdot \mathbf{F}_{\overline{\nabla}}(X_{j}^{\mathbb{H}_{F_{0}}},Y_{j}^{\mathbb{H}_{F_{0}}})u_{\btheta,\bk},\chi_{j}u_{\btheta,\bk}\rangle_{L^{2}} +\|u_{\btheta,\bk}\|_{L^{2}}^{2}\mathcal{O}(|\bk|^{-1/2}) \\
    \geq & (F_{\min}(\mathbf{l})-\eps)\sum_{j=1}^{N}\langle u_{\btheta,\bk},\chi_{j}u_{\btheta,\bk}\rangle_{L^{2}}+\|u_{\btheta,\bk}\|_{L^{2}}^{2}\mathcal{O}(|\bk|^{-1/2}) \\
     \geq & (F_{\min}(\mathbf{l})-2\eps),
\end{split}
\]
where in the last inequality we used the normalization of the sections, and that up to increasing $R$, we have $\mathcal{O}(|\bk|^{-1/2})\geq -\eps$. On the other hand, the left-hand side can be bounded using the fact that $|X_{j}^{\mathbb{H}_{F_{0}}}|,|Y_{j}^{\mathbb{H}_{F_{0}}}| \leq 1$:
\[
\left| \sum_{j=1}^{N}\frac{2}{|\bk|}\Im \langle \mathbf{X}_{j,\btheta,\bk}u_{\btheta,\bk},\chi_{j}\mathbf{Y}_{j,\btheta,\bk}u_{\btheta,\bk}\rangle_{L^{2}}  \right| \leq \frac{2}{|\bk|} \sum_{j=1}^{N}\int_{F_{0}}\chi_{j}|\nabla_{\btheta,\bk}u_{\btheta,\bk}|^{2}=\frac{2}{|\bk|} \|\nabla_{\btheta,\bk}u_{\btheta,\bk}\|_{L^{2}}^{2}.
\]
This finishes the proof.
\end{proof}

We will also need the Borel--Weil version of Lemma \ref{lemma:lambda-theta}.

\begin{lemma}\label{lemma:lambda-theta-k}
Assume that the holonomy group $\mathrm{Hol}(P,\nabla)$ is dense in $G \times \Z^{d}$. Then, $\lambda_{0}(\btheta,\bk)=0$ if and only if $\btheta=0$ and $\bk=\mathbf{0}$.
\end{lemma}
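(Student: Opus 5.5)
The easy direction is immediate. For $\btheta=\mathbf{0}$ and $\bk=\mathbf{0}$, the line bundle $\mathbf{L}^{\otimes\mathbf{0}}$ is trivial, the fiberwise holomorphic sections are the functions constant along the fibers of $F_{0}\to M_{0}$, and $\Delta_{\mathbf{0},\mathbf{0}}$ reduces to $\Delta_{M_0}$. This operator annihilates constants, so $\lambda_{0}(\mathbf{0},\mathbf{0})=0$.

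For the converse, assume $\lambda_{0}(\btheta,\bk)=0$. Then there is a nonzero $u\in C^\infty_{\hol}(F_0,p_{F_0}^*L_{\btheta}\otimes\mathbf{L}^{\otimes\bk})$, smooth by hypoellipticity, with $\nabla_{\btheta,\bk}u=0$; this follows by pairing $\Delta_{\btheta,\bk}u=0$ with $u$. I would undo the Borel--Weil decomposition. Via \eqref{eq:horizontal-modes} and \eqref{eq:operator-floquet-borel-weil}, $u$ corresponds (choosing one multiplicity index, as before) to a nonzero function $f\in C^\infty(P_0)$ lying in the $\bk$-isotypic component of the fiberwise Fourier transform, with $(d_{\HH_{P_0}}+ip_0^*\eta_{\btheta})f=0$. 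The isotypic projection commutes with the $G$-equivariant operator $d_{\HH_{P_0}}+ip_0^*\eta_{\btheta}$, so this step is legitimate. By the computation preceding \eqref{eq:operator-floquet-borel-weil}, the function $F:=(\pi^*f)(p^*s_{\btheta})\in C^\infty_{\btheta}(P)$ then satisfies $d_{\HH_P}F=0$. In other words, $F$ is constant along horizontal curves in $P$.

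Next I would exploit the holonomy. Fix $u_0\in P$ over $x_0$ and a loop $\gamma$ at $x_0$. The horizontal lift from $u_0$ ends at $h_\gamma\cdot u_0$, where $h_\gamma=(g_\gamma,\rho(\gamma))$ acts through the combined $G\times\Z^d$ action. Horizontal constancy gives $F(h_\gamma\cdot u_0)=F(u_0)$. Because $F$ is $\btheta$-equivariant in $\Z^d$, this becomes
\[
e^{i\rho(\gamma)\cdot\btheta}F(u_0\cdot g_\gamma)=F(u_0).
\]
Define $\Phi(g,\mathbf{n}):=e^{i\mathbf{n}\cdot\btheta}F(u_0\cdot g)$ on $G\times\Z^d$. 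This function is continuous and invariant under right multiplication by $\mathrm{Hol}(P,\nabla)$, which is the key observation. The same identity applies with $u_0$ replaced by $u_0\cdot g'$, and horizontal lifts commute with the right $G$-action, with holonomy conjugated accordingly. Density of the holonomy group then forces $\Phi$ to be constant on each $\{g\}\times\Z^d$ slice and along $G$. Concretely, the function $(g,\mathbf{n})\mapsto e^{i\mathbf{n}\cdot\btheta}F(u_0 g)$ is invariant under a dense subgroup of $G\times\Z^d$ and is continuous, so it is constant.

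Two conclusions follow. First, the map $\mathbf{n}\mapsto e^{i\mathbf{n}\cdot\btheta}$ is constant, so $\btheta=\mathbf{0}$ in $\mathrm{U}(1)^d$; this can also be seen because density surjects onto $\Z^d$ and the same argument as Lemma~\ref{lemma:lambda-theta} applies. Second, $F$ restricted to the fiber $P_{x_0}$ is constant. Since $F$ is horizontally constant and $M$ is connected, $F$ is then constant along every $G$-fiber. A nonzero function that is constant on fibers lies only in the trivial isotypic component, which forces $\bk=\mathbf{0}$ and contradicts $f$ being in the $\bk$-component otherwise.

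The main obstacle is the bookkeeping in this holonomy argument. I need the invariance to hold under a group, not merely a set, and I need continuity to pass to the closure. To handle this, I would verify carefully that the set of $h\in G\times\Z^d$ with $\Phi(h\cdot)=\Phi$ is a closed subgroup containing $\mathrm{Hol}(P,\nabla)$. Closure under composition uses concatenation of loops together with the equivariance of horizontal lifts under the structure group. Once this is established, density gives invariance under all of $G\times\Z^d$ and finishes the argument.
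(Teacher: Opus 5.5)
Your proof is correct and follows the paper's argument. A zero eigenvalue gives a $\nabla_{\btheta,\bk}$-parallel section, which is invariant under holonomy; density and continuity extend this invariance to all of $G\times\Z^d$, which forces both $\n\mapsto e^{i\n\cdot\btheta}$ and the $G$-representation to be trivial. The only differences are presentational: you phrase it through the horizontally constant function $F$ on $P$ rather than the fiber value $\xi=\xi_1\otimes\xi_2\in(L_{\btheta})_{x_0}\otimes H^0(G/T,\mathbf{J}^{\otimes\bk})$ with irreducibility of $\rho_{\bk}$. You should also state that $F(u_0)\neq 0$ before concluding $\btheta=\mathbf{0}$; otherwise $\Phi\equiv 0$, and horizontal transport would give $F\equiv 0$.
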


\begin{proof}
We recall that the holonomy was defined in \eqref{eq:holonomy}, and that we will write $(g_{\gamma},\n_{\gamma})$ to emphasize that the element comes from the parallel transport associated to the loop $\gamma$ (where $\n_{\gamma}=\rho(\gamma)$). Define the character $\mathbf{e}_{\btheta}(\n):=e^{i\n\cdot \btheta}$. 

Now, suppose that $\lambda_{0}(\btheta,\bk)=0$. Then, there exists a nonzero $f_{\btheta,\bk}\in C_{\mathrm{hol}}^{\infty}(F_{0},p_{F_{0}}^{*}L_{\btheta}\otimes \Lk)$ such that $\nabla_{\btheta,\bk}f_{\btheta,\bk}=0$. Since $f_{\btheta,\mathbf k}$ is parallel, parallel transport along every loop based at $x_{0}$ fixes its value $\xi$ in the fiber over $x_{0}$. We now write $\xi=f_{\btheta,\bk}|_{(F_{0})_{x_{0}}}$, and note that 
\[
\xi \in H^{0}((F_{0})_{x_{0}},p_{F_{0}}^{*}L_{\btheta}\otimes \Lk|_{(F_{0})_{x_{0}}}) \simeq (L_{\btheta})_{x_{0}} \otimes H^{0}((F_{0})_{x_{0}},\Lk|_{(F_{0})_{x_{0}}}) \simeq (L_{\btheta})_{x_{0}} \otimes H^{0}(G/T,\mathbf{J}^{\otimes \bk}).
\]
By the Borel--Weil Theorem, $H^{0}(G/T,\mathbf{J}^{\otimes \bk})$ is an irreducible representation of $G$, which we denote by $\rho_{\bk}$. The action of $G \times \Z^{d}$ on $L_{\btheta}\otimes H^{0}(G/T,\mathbf{J}^{\otimes \bk})$ is then given by $\varrho_{\btheta,\bk}(g,\n):=\mathbf{e}_{\btheta}(\n)\otimes\rho_{\bk}(g)$. Of course, after trivializing $(L_{\btheta})_{x_{0}}$, this is just $\varrho_{\btheta,\bk}(g,\n)=\mathbf{e}_{\btheta}(\n)\rho_{\bk}(g)$. Since $\xi \neq 0$ and $(L_{\btheta})_{x_{0}}$ is one dimensional, after choosing a nonzero $\xi_{1} \in (L_{\btheta})_{x_{0}}$, we can write $\xi=\xi_{1} \otimes \xi_{2}$, where $\xi_{2} \in H^{0}(G/T,\mathbf{J}^{\otimes \bk})$ is nonzero too. If we take $\gamma \in \ker \rho$, then $\alpha_{\btheta}(\n_{\gamma})=1$ since $\n_{\gamma}=\rho(\gamma)=0$. Thus, we obtain
\[
\xi_{1} \otimes \xi_{2}=\xi=\varrho_{\btheta,\bk}(g_{\gamma},\n_{\gamma})\xi=\mathbf{e}_{\btheta}(\n_{\gamma})\xi_{1} \otimes \rho_{\bk}(g_{\gamma})\xi_{2}=\xi_{1} \otimes \rho_{\bk}(g_{\gamma})\xi_{2}.
\]
Since $\xi_{1} \neq 0$, we conclude that $\xi_{2}$ is fixed by $H_{G}=\{ g_{\gamma} \in G:\rho(\gamma)=0 \}$. Now note that since $\Z^{d}$ is discrete, $G \times \{\mathbf{0}\} \leq G \times \Z^{d}$ is open. Hence, the density of the holonomy group implies $\overline{\mathrm{Hol}(P,\nabla) \cap (G \times \{\mathbf{0}\})}=G \times \{\mathbf{0}\}$. This implies that $\overline{H_{G}}=G$. By continuity, we obtain that $\xi_{2}$ is fixed for all $g \in G$, showing that the irreducible representation has a nonzero fixed element, a contradiction for $\bk \neq \mathbf{0}$. Hence, $\bk=\mathbf{0}$. Arguing as before, this implies $\alpha_{\btheta}(\rho(\gamma))\xi_{1}=\xi_{1}$, but since $\xi_{1} \neq 0$, and $\rho$ is surjective, we obtain $1=\mathbf{e}_{\btheta}(\n)=e^{i \n \cdot \btheta}$ for all $\n \in \Z^{d}$, showing that $\btheta=\mathbf{0}$. 

The reverse implication is obvious.
\end{proof}

\begin{remark} \label{remark:holonomy}
Observe that the density of the full holonomy group is stronger than what is required in Lemma \ref{lemma:lambda-theta-k}. Indeed, the proof only uses that
$\overline{\mathrm{pr}_G(\operatorname{Hol}(P, \nabla) \cap(G \times\{\mathbf{0}\}))}=G$ (where $\mathrm{pr}_{G} \colon G \times \Z^{d} \to G$ is the projection onto the first entry), together with the surjectivity of $\rho$. Equivalently, it is enough to assume that the holonomy elements with trivial $\Z^d$-component have dense $G$-projection.    
\end{remark}

Finally, we prove the spectral gap for $\Delta_{\btheta,\bk}$.

\begin{proof}[Proof of Theorem \ref{thm:spectral-gap}]
By Theorem \ref{theorem:twisted-CL}, we have that there exists $R>0$ such that for all $\btheta \in \mathrm{U}(1)^{d}$ and for all $\bk \in \widehat{G}$ with $|\bk|>R$ we have $\lambda_{0}(\btheta,\bk) \geq F_{\min}R/4$, where we took $\eps=F_{\min}/4$. By continuity and Lemma \ref{lemma:lambda-theta-k}, we have that for $0<|\bk| \leq R$, there exists a constant $C_{R}$ so that $\lambda_{0}(\btheta,\bk)>C_{R}$ for all $\btheta \in \mathrm{U}(1)^{d}$. Then, simply take $C=\min\{F_{\min}R/4,C_{R}\}$.    
\end{proof}

\subsection{Proof of Theorem \ref{theorem:main2}}

We use a similar setting as in Theorem \ref{theorem:main1-local}. This, together with Proposition \ref{proposition:borel-floquet-weil} gives
\[
\begin{split}
\langle e^{-t\Delta_{P}^{\HH}}f , g \rangle_{L^2(P)} & = \underbrace{\dfrac{1}{(2\pi)^d}\int_{\btheta \in \mathrm{U}(1)^d} \langle e^{-t\Delta_{\btheta, \bk = \mathbf{0}}}f_{\btheta, \bk=\mathbf{0}}, g_{\btheta, \bk=\mathbf{0}}\rangle_{L^2(M_0, L_{\btheta})} \dd\btheta}_{=A} \\
& + \underbrace{\dfrac{1}{(2\pi)^d} \sum_{\bk \in \widehat{G} \setminus \{\mathbf{0}\}} d_{\bk} \sum_{i = 1}^{d_{\bk}} \int_{\btheta \in \mathrm{U}(1)^d} \langle e^{-t\Delta_{\btheta,\bk}}f_{\btheta,\bk, i}, g_{\btheta,\bk, i}\rangle_{L^2(F_0, p_{F_{0}}^*L_{\btheta}\otimes \Lk)} \dd\btheta}_{=B}.
\end{split}
\]
For $f \in C_{\mathrm{comp}}^{\infty}(P)$, define $\mathcal{F}_{0}(f)=f_{\bk=\mathbf{0}}$, i.e.,
\[ 
\mathcal{F}_{0}(f)=\int_{P_x} f(w \cdot g) \dd g, \qquad x \in M
\]
where $w \in P_x$ is arbitrary. In the term $A$, $f_{\btheta, \bk=\mathbf{0}},g_{\btheta, \bk=\mathbf{0}} \in C^\infty_{\hol}(F_0, p_{F_{0}}^*L_{\btheta})$ are identified with elements of $C^\infty(M_0,L_{\btheta})$. Hence, we can apply Theorem \ref{theorem:main1} to obtain
\[
t^{d/2}A=\kappa (\mathbf{1}_{M}\otimes \mathbf{1}_{M})(\mathcal{F}_{0}(f)\otimes \mathcal{F}_{0}(g))+\sum_{j=1}^{N-1}t^{-j}\langle \mathcal{C}_{j},\mathcal{F}_{0}(f)\otimes\mathcal{F}_{0}(g)\rangle+R_{N}^{M}(t,\mathcal{F}_{0}(f),\mathcal{F}_{0}(g)).
\]
Note that $(\mathbf{1}_{M}\otimes \mathbf{1}_{M})(\mathcal{F}_{0}(f)\otimes \mathcal{F}_{0}(g))=(\mathbf{1}_{P}\otimes \mathbf{1}_{P})(f,g)$.

To deal with $B$, we first see that the spectral gap for $\Delta_{\btheta,\bk}$ (Theorem \ref{thm:spectral-gap}) together with the Cauchy--Schwarz inequality give the following bound: there exists a positive constant $C$ such that for any $\btheta \in \mathrm{U}(1)^{d}$ and $\bk \neq \mathbf{0}$, we have 
\[
|\langle e^{-t\Delta_{\btheta,\bk}}f,g \rangle_{L^2(F_0, p_{F_{0}}^*L_{\btheta}\otimes \Lk)}| \leq e^{-Ct}\|f\|_{L^2(F_0, p_{F_{0}}^*L_{\btheta}\otimes \Lk)} \|g\|_{L^2(F_0, p_{F_{0}}^*L_{\btheta}\otimes \Lk)}. 
\]
We claim that this gives the result. Indeed, we have
\[
\begin{split}
    |B| & \leq ce^{-Ct} \sum_{\bk \in \widehat{G} \setminus \{\mathbf{0}\}} d_{\bk} \sum_{i = 1}^{d_{\bk}} \int_{\btheta \in \mathrm{U}(1)^d} \|f_{\btheta,\bk, i} \|_{L^2(F_0, p_{F_{0}}^*L_{\btheta}\otimes \Lk)}  \|g_{\btheta,\bk, i} \|_{L^2(F_0, p_{F_{0}}^*L_{\btheta}\otimes \Lk)} \dd\btheta \\
    & \leq ce^{-Ct} \int_{\mathrm{U}(1)^{d}} \|f_{\btheta}\|_{L^{2}(P_{0},p_{0}^{*}L_{\btheta})} \|g_{\btheta}\|_{L^{2}(P_{0},p_{0}^{*}L_{\btheta})}~ \dd \btheta \leq ce^{-Ct}\|f\|_{L^{2}(P)}\|g\|_{L^{2}(P)}.
\end{split}
\]
The result now follows from the Schwartz kernel theorem.

\subsection{Proof of Theorem \ref{theorem:main2-local}}

We first establish the analog of Lemma \ref{lemma:Delta_theta_local}. 

\begin{lemma}
\label{lemma:nontrivial-G-local}
Let $p_{0}\colon P_{0}\to M_{0}$ be the bundle projection. Let
$\mathsf{P}_{G}F(u) := \int_{G}F(ug)~\dd g$ be the orthogonal projection onto the right $G$-invariant functions, where $\dd g$ is Haar probability measure. Note that $\mathsf{P}_{G}=p_{0}^{*}\mathcal{F}_{0}$.

For every $\ell_{u},\ell_{v}\geq 0$, there exist constants $c,C>0$ such that the Schwartz kernel $E_{\btheta}(t,u,v)$ of $e^{-t\Delta_{\btheta}^{\mathbb H}} (\mathrm{Id}-\mathsf{P}_{G})$ satisfies, for $t \geq 1$:
\[
\sup_{\btheta \in \mathrm{U}(1)^{d}} \|E_{\btheta}(t)\|_{C^{\ell_{u},\ell_{v}}(P_{0}\times P_{0})} \leq Ce^{-ct}.
\]
The estimate is understood in any local trivialization of the family of flat line bundles $L_{\btheta}\to M_{0}$.    
\end{lemma}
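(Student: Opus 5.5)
The plan mirrors the proof of Lemma \ref{lemma:Delta_theta_local}: combine an $L^{2}$ spectral gap with a smoothing estimate, both uniform in $\btheta$.

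\textbf{Step 1: uniform $L^{2}$ decay.} Since $\Delta_{\btheta}^{\mathbb H}$ commutes with the right $G$-action, it commutes with $\mathsf{P}_{G}$. Under the Borel--Weil decomposition, the range of $\mathrm{Id}-\mathsf{P}_{G}$ is exactly $\bigoplus_{\bk\neq\mathbf{0}}$, and by \eqref{eq:operator-floquet-borel-weil} the operator $\Delta_{\btheta}^{\mathbb H}$ acts there as $\Delta_{\btheta,\bk}$. Theorem \ref{thm:spectral-gap} gives $\lambda_{0}(\btheta,\bk)\geq C_{0}$ for all $\btheta$ and all $\bk\neq\mathbf{0}$. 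The Fourier transform is an $L^{2}$-isometry, so the spectral theorem yields
\[
\|e^{-t\Delta_{\btheta}^{\mathbb H}}(\mathrm{Id}-\mathsf{P}_{G})\|_{L^{2}\to L^{2}}\leq e^{-C_{0}t},
\]
uniformly in $\btheta$. This uniformity holds even without a trivialization, since the norm is intrinsic on $L^{2}(P_{0},p_{0}^{*}L_{\btheta})$.

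\textbf{Step 2: smoothing, uniform in $\btheta$.} I need $\|e^{-\Delta_{\btheta}^{\mathbb H}/2}\|_{L^{2}\to H^{m}}\leq C_{m}$, and by duality and self-adjointness the bound $H^{-m}\to L^{2}$, with $m>\dim P_{0}/2+\max(\ell_{u},\ell_{v})$. This is the main obstacle, because $\Delta_{\btheta}^{\mathbb H}$ is only hypoelliptic, not elliptic. My first attempt would bypass hypoellipticity by using the commuting vertical operator. Set $L_{\btheta}:=\Delta_{\btheta}^{\mathbb H}+\Delta^{\mathbb V}_{P_{0}}$. This is the twisted Laplace--Beltrami operator of a connection metric, hence elliptic, and its coefficients depend smoothly on $\btheta$ in each $U_{i}$. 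It follows that $\|(\mathrm{Id}+L_{\btheta})^{m/2}\bullet\|_{L^{2}}$ is equivalent to $\|\bullet\|_{H^{m}}$ uniformly in $\btheta\in \overline{U_{i}}$.

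Since $[\Delta_{\btheta}^{\mathbb H},\Delta^{\mathbb V}]=0$, I can write $e^{-\Delta^{\mathbb H}_{\btheta}/2}=e^{-L_{\btheta}/2}e^{\Delta^{\mathbb V}/2}$, but $e^{\Delta^{\mathbb V}/2}$ is unbounded. So instead I would use the hypoelliptic subelliptic estimate with gain $\delta>0$ given by the bracket-generating (nondegenerate curvature) condition:
\[
\|u\|_{H^{s+\delta}}\leq C_{s}\big(\|\Delta_{\btheta}^{\mathbb H}u\|_{H^{s}}+\|u\|_{L^{2}}\big).
\]
Hörmander's estimate applies to the sum of squares of the fixed horizontal fields. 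The $\btheta$-dependent first-order terms are $\btheta$-smooth, bounded lower-order perturbations, so the constants are uniform on the compact closures $\overline{U_{i}}$. Iterating this $\lceil m/\delta\rceil$ times gives $\|u\|_{H^{m}}\leq C\|(\mathrm{Id}+\Delta_{\btheta}^{\mathbb H})^{K}u\|_{L^{2}}$. Then $\sup_{\lambda\geq0}(1+\lambda)^{K}e^{-\lambda/2}<\infty$ gives the smoothing bound.

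\textbf{Step 3: conclusion.} Factor the operator as
\[
e^{-\Delta^{\mathbb H}_{\btheta}/2}\,\big(e^{-(t-1)\Delta_{\btheta}^{\mathbb H}}(\mathrm{Id}-\mathsf{P}_{G})\big)\,e^{-\Delta^{\mathbb H}_{\btheta}/2},
\]
using that $\mathsf{P}_{G}$ commutes with the semigroup. Steps 1 and 2 then give
\[
\|e^{-t\Delta_{\btheta}^{\mathbb H}}(\mathrm{Id}-\mathsf{P}_{G})\|_{H^{-m}\to H^{m}}\leq C e^{-C_{0}(t-1)}.
\]
The Sobolev embedding converts the $H^{-m}\to H^{m}$ norm into the $C^{\ell_{u},\ell_{v}}$ norm of the kernel. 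Taking the maximum of the constants over the finitely many $U_{i}$ covering $\mathrm{U}(1)^{d}$ gives the supremum over $\btheta$. Changing the trivialization on $U_{i}$ multiplies the kernel by $\pi^{*}q(u)\overline{\pi^{*}q(v)}$ with smooth unit-modulus $q$, which only alters constants.
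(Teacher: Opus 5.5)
Your proposal is correct and is essentially the paper's proof. It has the same three ingredients: the uniform $L^{2}$ decay on the range of $\mathrm{Id}-\mathsf{P}_{G}$ from Theorem~\ref{thm:spectral-gap}; a subelliptic bound $\|u\|_{H^{m}}\le C\|(\mathrm{Id}+\Delta_{\btheta}^{\mathbb H})^{K}u\|_{L^{2}}$, uniform in $\btheta$, coming from the bracket condition that nondegenerate curvature provides; and the factorization and Sobolev-embedding argument of Lemma~\ref{lemma:Delta_theta_local}, with $\dim P_{0}$ correctly replacing $\dim M_{0}$ in the choice of $m$.

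The one phrase worth sharpening is ``bounded lower-order perturbations''. A first-order term is not automatically negligible against a subelliptic gain $\delta<1$. Here, however, it comes from the Hermitian connection $d_{\mathbb H_{P_{0}}}+ip_{0}^{*}\eta_{\btheta}$, with $p_{0}^{*}\eta_{\btheta}$ horizontal and uniformly bounded. Hence $\sum_{j}\|X_{j}u\|^{2}\lesssim\langle\Delta_{\btheta}^{\mathbb H}u,u\rangle+\|u\|^{2}$ uniformly in $\btheta$, and the standard subelliptic argument goes through with uniform constants.
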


\begin{proof}
By Theorem \ref{thm:spectral-gap} we obtain
\begin{equation}
\label{eq:nontrivial-G-L2-gap}
\|
e^{-t\Delta_{\btheta}^{\mathbb H}}
(\mathrm{Id}-\mathsf P_{G})
\|_{L^{2}\to L^{2}}
\leq e^{-Ct}.
\end{equation}
uniformly in $\btheta \in \mathrm{U}(1)^{d}$. By the discussions in \cite{Cekic-Lefeuvre-24}, the nondegeneracy. of the curvature implies that H\"ormander's bracket condition holds, implying subellipticity. Furthermore, since $\Delta_{\btheta}^{\mathbb{H}}$ and $\Delta_{\mathbf{0}}^{\mathbb{H}}$ coincide up to terms of order 1, and $P_{0}$ and $\mathrm{U}(1)^{d}$ are compact, the subelliptic estimates are uniform in $\btheta$, i.e., for any integer $m \geq 0$, there exist $r_{m},C_{m} \geq 0$ with 
\begin{equation}
\label{eq:subelliptic}
\|u\|_{H^{m}(P_{0})} \leq C_{m} \|
(\mathrm{Id}+\Delta_{\btheta}^{\mathbb{H}})^{r_{m}}u \|_{L^{2}(P_{0})}.
\end{equation}
The estimate is understood locally after trivializing the flat line bundle
$L_{\btheta}$. Its constants can be chosen uniformly in $\btheta$. Indeed, the dependence in $\btheta$ in $\Delta_{\btheta}^{\mathbb{H}}$ is on the lower order terms. Furthermore, those coefficients depend smoothly on $\btheta$. Uniformity then follows from the compactness of
$P_{0}$ and $\mathrm{U}(1)^{d}$. Hence, the proof follows that of Lemma \ref{lemma:Delta_theta_local}, mutatis mutandis.    
\end{proof}

Now we prove Theorem \ref{theorem:main2-local}. As in the previous proof, we take a finite open cover $\mathrm U(1)^{d} =
U_{0}\cup\cdots\cup U_{r}$ by contractible trivializing neighborhoods for the family $L_{\btheta}\to M_{0}$, and choose a subordinate partition of unity $\sum\chi_{i}=1$. For $\btheta \in U_{i}$, let $s_{\btheta,i}$ be the corresponding trivializing section on $M$. Abusing of the notation, we also denote its pullback to $P$ by $s_{\btheta,i}$. Let $H_{\btheta,i}^{P,\mathbb H}(t,u_{0},v_{0})$ be the kernel of the twisted horizontal heat operator on $P_{0}$ in this trivialization. Then:
\begin{equation} \label{eq:kernel-P}
    H_{P}^{\mathbb H}(t,u,v) = \frac{1}{(2\pi)^{d}} \sum_{i=0}^{r} \int_{U_{i}} \chi_{i}(\btheta) s_{\btheta,i}(p(u)) \overline{ s_{\btheta,i}(p(v))} 
H_{\btheta,i}^{P,\mathbb{H}} (t,\pi(u),\pi(v)) ~\dd\btheta.
\end{equation}
The projection $\mathsf P_{G}$ onto the trivial $G$-representation commutes with $\Delta_{\btheta}^{\mathbb H}$. Since Haar measure is normalized, pullback by $p_{0}$ identifies $L^{2}(M_{0},L_{\btheta})$ isometrically with
$\mathrm{Ran} \mathsf{P}_{G}$, and under this identification one has $ \Delta_{\btheta}^{\mathbb{H}}|_{\mathrm{Ran}\mathsf{P}_{G}} = \Delta_{\boldsymbol{\theta}}$. Therefore,
\begin{equation} \label{eq:kernel-P-to-M}
    H_{\btheta,i}^{P,\mathbb{H}}(t,u_{0},v_{0}) = H_{\btheta,i}^{M}(t,p_{0}(u_{0}),p_{0}(v_{0})) + E_{\btheta,i}(t,u_{0},v_{0}),
\end{equation}
where $E_{\btheta,i}$ is the kernel of $e^{-t\Delta_{\btheta}^{\mathbb{H}}} (\mathrm{Id}-\mathsf{P}_{G})$, and $H_{\btheta,i}^{M}(t,x,y)$ denote the heat kernel of $\Delta_{\btheta}$ on $M_{0}$ in the same trivialization. Setting
\[
E_{P}(t,u,v)=\frac{1}{(2\pi)^{d}} \sum_{i=0}^{r} \int_{U_{i}} \chi_{i}(\btheta) s_{\btheta,i}(p(u)) \overline{s_{\btheta,i}(p(v))} E_{\btheta,i}( t,\pi(u),\pi(v)) ~\dd\btheta,
\]
we see that Lemma \ref{lemma:nontrivial-G-local} together with the smooth dependence of the sections $s_{\btheta,i}$, and the compactness of $K_{u}\times K_{v}$, imply $\|E_{P}(t)\|_{C^{\ell_{u},\ell_{v}}(K_{u}\times K_{v})} \leq Ce^{-ct}$. 

On the other hand, note that the contribution of the first term on the right-hand side of \eqref{eq:kernel-P-to-M}, after integration in \eqref{eq:kernel-P} is $H_{M}(t,p(u),p(v))$. Now write $K_{x}=p(K_{u}), K_{y}:=p(K_{v}) \subset M$. Applying Theorem
\ref{theorem:main1-local} to $K_{x}$ and $K_{y}$, we obtain smooth functions
$k_{j}^{M}$, $j=1,\ldots,N-1$, such that 
\[
t^{d/2}H_{P}^{\mathbb{H}}(t,u,v)=\kappa + \sum_{j=1}^{N-1}t^{-j} k_{j}^{M}(p(u),p(v))+ R_{N}^{M}(t,p(u),p(v)) + t^{d/2}E_{P}(t,u,v).
\]
Now define $k_{j}^{P}(u,v) := k_{j}^{M}(p(u),p(v))$, and $R_{N}^{P}(t,u,v) := R_{N}^{M}(t,p(u),p(v))+ t^{d/2}E_{P}(t,u,v)$. The proof is finished after we show the promised bounds for these operators. However, this follows from the bounds in Theorem \ref{theorem:main1-local}, the one of $E_{P}$, and the fact that $p\colon P\to M$ is smooth, and $K_{u},K_{v}$ are compact.

\subsection{Proof of Theorem \ref{theorem:main2-global}}

\begin{lemma}
\label{lemma:nontrivial-G-global}
Let $D_{P}:=p^{-1}(D)\subset P$. Under the standing assumptions, for every pair of integers $\ell_{u},\ell_{v} \geq 0$, there exist constants $c,C>0$ such that for $t \geq 1$
\[
\sup_{\n,\mathbf{m} \in \Z^{d}} \| E_{P} (t,\tau_{\n}(\bullet), \tau_{\mathbf{m}}(\bullet))\|_{C^{\ell_{u},\ell_{v}}(\overline{D_{P}}\times\overline{D_{P}})} \leq Ce^{-ct},
\]
where $E_{P}(t,u,v)$ denotes the contribution to $H_{P}^{\mathbb H}(t,u,v)$ of the orthogonal complement of the right $G$-invariant functions. 
\end{lemma}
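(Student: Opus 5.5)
The plan is to reduce the statement to Lemma \ref{lemma:nontrivial-G-local} and the $\Z^{d}$-equivariance of the Floquet representation. Since $E_{P}$ is defined by the same Floquet integral as in the proof of Theorem \ref{theorem:main2-local}, namely
\[
E_{P}(t,u,v)=\frac{1}{(2\pi)^{d}} \sum_{i=0}^{r} \int_{U_{i}} \chi_{i}(\btheta)\, s_{\btheta,i}(p(u))\, \overline{s_{\btheta,i}(p(v))}\, E_{\btheta,i}(t,\pi(u),\pi(v)) ~\dd\btheta,
\]
the first step is to substitute $u=\tau_{\n}u_{0}$ and $v=\tau_{\mathbf{m}}v_{0}$ with $u_{0},v_{0}\in\overline{D_{P}}$. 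The projection to $P_{0}$ satisfies $\pi\circ\tau_{\n}=\pi$, and $p$ intertwines the deck actions on $P$ and $M$. Hence Proposition \ref{prop:eta-theta}(ii) gives $s_{\btheta,i}(p(\tau_{\n}u_{0}))=e^{i\n\cdot\btheta}s_{\btheta,i}(p(u_{0}))$, and similarly for $v$. Therefore
\[
E_{P}(t,\tau_{\n}u_{0},\tau_{\mathbf{m}}v_{0})=\frac{1}{(2\pi)^{d}}\sum_{i}\int_{U_{i}}\chi_{i}(\btheta)\,e^{i(\n-\mathbf{m})\cdot\btheta}\, s_{\btheta,i}(p(u_{0}))\,\overline{s_{\btheta,i}(p(v_{0}))}\,E_{\btheta,i}(t,\pi(u_{0}),\pi(v_{0}))~\dd\btheta .
\]
In this form all the dependence on $(\n,\mathbf{m})$ sits in a unimodular phase that does not depend on $(u_{0},v_{0})$. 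Derivatives in $u_{0},v_{0}$ therefore never fall on it. This is exactly how the $\n$-uniformity was obtained for the term $S$ in the proof of Theorem \ref{theorem:main1-global}.

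The second step is to bound the integrand in $C^{\ell_{u},\ell_{v}}(\overline{D_{P}}\times\overline{D_{P}})$, uniformly in $\btheta$. Because $\overline{D_{P}}=p^{-1}(\overline D)$ is compact (it is a $G$-bundle over a compact set), the sections $s_{\btheta,i}\circ p$ and all their derivatives up to the required order are bounded on $\mathrm{supp}\,\chi_{i}\times\overline{D_{P}}$. This uses the smooth dependence of $s_{\btheta,i}$ on $\btheta$ and the compactness of $\mathrm{supp}\,\chi_{i}\subset U_{i}$; if needed I would shrink to a slightly smaller good cover so that these supports are compact in contractible sets. The map $\pi\colon \overline{D_{P}}\to P_{0}$ is a local isometry, so its derivatives are bounded. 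Lemma \ref{lemma:nontrivial-G-local} then gives $\|E_{\btheta,i}(t)\|_{C^{\ell_{u},\ell_{v}}(P_{0}\times P_{0})}\le Ce^{-ct}$ for $t\ge1$, uniformly in $\btheta$. The Leibniz rule combines these to bound the integrand by $Ce^{-ct}$, uniformly in $\btheta$, $\n$ and $\mathbf{m}$. Integrating over the compact torus $\mathrm{U}(1)^{d}$ and summing over the finitely many $i$ gives the claim.

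The main obstacle is a bookkeeping issue: justifying that the kernel $E_{\btheta,i}$ in the local trivialization really obeys the uniform bound on all of $P_{0}\times P_{0}$. Lemma \ref{lemma:nontrivial-G-local} is stated only up to the choice of local trivialization, and different $s_{\btheta,i}$ are related by multiplication by $\pi^{*}q$ as in \eqref{eq:conjugated}. To handle this I would check that, for $\btheta$ in the compact set $\mathrm{supp}\,\chi_{i}$, changing trivialization multiplies the kernel by smooth functions of $(\btheta,u_{0},v_{0})$ with uniformly bounded derivatives. Such factors only change the constant $C$, so the uniform subelliptic estimate \eqref{eq:subelliptic} and the $L^{2}$ gap \eqref{eq:nontrivial-G-L2-gap} carry over to every $i$.
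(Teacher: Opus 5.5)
Your proposal is correct and matches the paper's own proof essentially step for step. You rewrite $E_{P}(t,\tau_{\n}u_{0},\tau_{\mathbf{m}}v_{0})$ as the Floquet integral with the unimodular phase $e^{i(\n-\mathbf{m})\cdot\btheta}$ pulled out, and then bound the remaining integrand in $C^{\ell_{u},\ell_{v}}(\overline{D_{P}}\times\overline{D_{P}})$ uniformly in $\btheta$, using compactness of $\overline{D_{P}}$, smooth dependence of the trivializing sections, and Lemma \ref{lemma:nontrivial-G-local}. Your additional check that a change of trivialization only conjugates $E_{\btheta,i}$ by unimodular base functions $p_{0}^{*}q$, whose derivatives are uniformly bounded, is correct and is a detail the paper leaves implicit.
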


\begin{proof}
After choosing a finite cover of $\mathrm{U}(1)^{d}$ associated to trivializations of $L_{\btheta} \to M_{0}$, together with a subordinate
partition of unity $\sum_i\chi_i=1$, we can write
\[
E_{P}( t,\tau_{\n}u_{0},\tau_{\mathbf{m}}v_{0}) = \frac{1}{(2\pi)^d} \sum_{i=0}^{r} \int_{U_i} \chi_{i}(\btheta) e^{i(\n-\mathbf{m})\cdot\btheta} s_{\btheta,i}(p(u_{0})) \overline{s_{\btheta,i}(p(v_{0}))} E_{\btheta,i} (t,\pi(u_{0}),\pi(v_{0})) ~\dd\boldsymbol{\theta}.
\]
Since $|e^{i(\n-\mathbf{m})\cdot\btheta}|=1$, the bound is independent of $\n$ and $\mathbf{m}$. Derivatives in $u_{0}$ and $v_{0}$ fall on the trivializing sections, on the fixed smooth maps $p$ and $\pi$, or on $E_{\btheta,i}$. The trivializing sections and all their relevant derivatives are uniformly bounded on the finite family of trivializations and on $\overline{D_{P}}$ (note that since $G$ is compact and $D\Subset M$, $D_P=p^{-1}(D)$ is indeed relatively compact). Therefore, the result follows from Lemma \ref{lemma:nontrivial-G-local}.    
\end{proof}

Now we prove Theorem \ref{theorem:main2-global}. Arguing as in the proof of Theorem \ref{theorem:main2-local}, and since $p \circ \tau_{\n}=\tau_{\n} \circ p$, we see that
\begin{equation} \label{eq:decomposition-global}
    H_{P}^{\mathbb {H}}(t,\tau_{\n}u_{0},\tau_{\mathbf{m}}v_{0})= H_{M}(t,\tau_{\n}p(u_{0}), \tau_{\mathbf{m}}p(v_{0})) + E_{P} (t,\tau_{\n} u_{0} ,\tau_{\mathbf{m}}v_{0}),
\end{equation}
where $E_{P}$ is as in Lemma \ref{lemma:nontrivial-G-global}. If we write  $x_{0}=p(u_{0})$, and $y_{0}:=p(v_{0})$, we have $x_{0},y_{0} \in\overline D$. Define $P_{j}^{P}(\mathbf{z},u_{0},v_{0}):=P_{j}^{M}(\mathbf{z},p(u_{0}),p(v_{0}))$ and $R_{N}^{P}(t,\mathbf{r},u_{0},v_{0})=R_{N}^{M}(t,\mathbf{r},p(u_{0}),p(v_{0}))+ t^{d/2}E_{P}(t,\tau_{\mathbf{r}}u_{0},v_{0})$. Then, the expansion follows from \eqref{eq:decomposition-global} together with Theorem \ref{theorem:main1-global}. Finally, the bound on $R_{N}^{P}$ follows from the following two bounds. First, Theorem \ref{theorem:main1-global}, the chain rule, and compactness of $\overline{D_{P}}$ imply
\[
\sup_{|\mathbf{r}|\leq R\sqrt{t}}\| R_{N}^{M}(t,\mathbf{r},p(\bullet),p(\bullet)) \|_{C^{\ell_{u},\ell_{v}} (\overline{D_{P}}\times\overline{D_{P}})} \leq C_{R,N,\ell_{u},\ell_{v}}(e^{-ct}+t^{-N/2}),
\]
while the bound on $E_{P}$ is given by Lemma \ref{lemma:nontrivial-G-global}. This finishes the proof.

\subsection{The magnetic case} \label{subsection:G=U(1)}

In this short section we show how to use our results to obtain expansions in the case when $G=\mathrm{U}(1)$ (which is not semisimple). In this case, let $L \to M_{0}$ be the Hermitian line bundle associated with the principal $\mathrm{U}(1)$-bundle $P_{0} \to M_{0}$, and let $\nabla^{L}$ be its Hermitian connection. We write $F_{\nabla^{L}}=iB$, where $B \in \Omega^{2}(M_{0})$. The main step in our proof is to obtain a spectral gap. As in \S\ref{subsection:spectral-gap}, to do so, we only need a lower bound for $\lambda_{0}(\btheta,\bk)$ for $k=\bk$ big enough.

We first recall the result in \cite{Helffer-Kordyukov-09}. There, it is shown that if $S=\{x\in M_{0} : B(x)=0\}$ is smooth compact hypersurface, and there exist a neighborhood $V$ of $S$, and a constant $ C>0$, such that for every $x \in V$ one has
\[
C^{-1} d(x, S)^r \leq |B(x)| \leq C d(x, S)^r, 
\]
then there exist constants $c_{0}>0$ and $h_{0}>0$ such that
\begin{equation}
\label{eq:HK-bound}
\|h\nabla_{k}u\|_{L^{2}}^{2} \geq c_{0} h^{\frac{2r+2}{r+2}} \|u\|_{L^{2}}^{2}, \quad 0<h<h_{0}.
\end{equation}

Note that in this case, the curvature is not globally degenerate since $B$ vanishes on $S$ (although the holonomy condition is satisfied as long as $B$ is not identically zero).

We claim that if \eqref{eq:HK-bound} holds not just for sections of $L$ (or $L^{\otimes k}$), but rather for sections of the twisted bundle $L^{\otimes k} \otimes L_{\btheta}$, then there exist constants $c>0$ and $k_{0} \geq 1$ such that:
\[
\lambda_{0}(\btheta,k) \geq c|k|^{\frac{2}{r+2}},
\qquad
\boldsymbol{\theta}\in\mathrm U(1)^{d},
\quad |k|\geq k_{0},
\]
implying a spectral gap as in \S\ref{subsection:spectral-gap}. That is, one needs a twisted version of the results in \cite{Helffer-Kordyukov-09}, similarly as we showed in \S\ref{subsection:spectral-gap} a twisted version of the results in \cite[Chapter~5]{Cekic-Lefeuvre-24}.

To prove the claim, choose a finite open cover $M_{0}=V_{1}\cup\cdots\cup V_{J}$ by contractible sets over which $L$ is unitarily trivial, so that over $V_{j}$, $\nabla^{L}=d+iA_{j}$, where $A_{j} \in \Omega^{1}(V_{j})$. Now, suppose without loss of generality that $h:=k^{-1}>0$. We see that in the above local trivialization, we have $h\nabla_{\btheta,k} = hd+iA_{j}+ih\eta_{\btheta}$. Hence, Young's inequality gives 
\[
\|h\nabla_{\btheta,k}u\|_{L^{2}}^{2} \geq \frac{1}{2}\|(hd+iA)u\|_{L^{2}}^{2} - 2h^{2} \|\eta_{\btheta}u\|_{L^{2}}^{2},
\]
where $(hd+iA)$ denotes intrinsically the semiclassical covariant
derivative associated with $\nabla^{L^{\otimes k}}$.
By Proposition \ref{prop:eta-theta}, we can bound $\eta_{\btheta}$ uniformly in the $L^{\infty}$ by a constant $C>0$ for all $\btheta \in \mathrm{U}(1)^{d}$. Then, we obtain 
\begin{equation} \label{eq:semiclassical-bound}
    \|h\nabla_{\btheta,k}u\|_{L^{2}}^{2} \geq \frac{1}{2}\|h\nabla_{k}u\|_{L^{2}}^{2} - 2h^{2}C^{2} \|u\|_{L^{2}}^{2},
\end{equation}
where $\nabla_{k}$ denotes the connection induced by $\nabla^{L}$ on $L^{\otimes k}$. Using the twisted version of \eqref{eq:HK-bound} together with \eqref{eq:semiclassical-bound}, we obtain 
\[
\|h\nabla_{\btheta,k}u\|_{L^{2}}^{2} \geq \frac{c_{0}}{2} h^{\frac{2r+2}{r+2}}\|u\|_{L^{2}}^{2} - 2C^{2}h^{2}\|u\|_{L^{2}}^{2}.
\]
Since $2-\frac{2r+2}{r+2}= \frac{2}{r+2}>0$, we may assume that $2C^{2}h^{2} \leq \frac{c_{0}}{4} h^{\frac{2r+2}{r+2}}$. Therefore, we conclude that 
\[
\langle \Delta_{\btheta,k}u,u \rangle_{L^{2}} \geq \frac{c_{0}}{4} k^{\frac{2}{r+2}} \|u\|_{L^{2}}^{2},
\]
which gives the claim. 

The proof of the asymptotic follows the same procedure as in the previous sections.

\bibliographystyle{alpha}
\bibliography{biblio}

@book {Hoermander-I-03,
    AUTHOR = {H{\"o}rmander, Lars},
     TITLE = {The analysis of linear partial differential operators. {I}},
    SERIES = {Classics in Mathematics},
      NOTE = {Distribution theory and Fourier analysis,
              Reprint of the second (1990) edition [Springer, Berlin;
              MR1065993 (91m:35001a)]},
 PUBLISHER = {Springer-Verlag, Berlin},
      YEAR = {2003},
     PAGES = {x+440},
      ISBN = {3-540-00662-1},
   MRCLASS = {35-02},
  MRNUMBER = {1996773},
       DOI = {10.1007/978-3-642-61497-2},
       URL = {https://doi.org/10.1007/978-3-642-61497-2},
}

@misc{Cekic-Lefeuvre-Munoz-Thon-26,
 author = {Ceki\'c, Mihajlo and Lefeuvre, Thibault and {Mu\~noz-Thon}, Sebasti\'an},
 title = {Decay of correlations on Abelian covers of isometric extensions of volume-preserving Anosov flows},
 year = {2026},
 howpublished = {Preprint, {arXiv}:2603.06379 [math.{DS}] (2026)},
 url = {https://arxiv.org/abs/2603.06379},
 arXiv = {arXiv:2603.06379}
}

@phdthesis{Charles-00,
  TITLE = {{Aspects semi-classiques de la quantification g{\'e}om{\'e}trique}},
  AUTHOR = {Charles, Laurent},
  URL = {https://theses.hal.science/tel-00001289},
  NOTE = {Rapporteurs: Louis Boutet de Monvel, Yves Colin de Verdi{\`e}re. Suffragants: Yvar Ekeland, Andr{\'e} Voros, San Vu Ngoc.},
  SCHOOL = {{Universit{\'e} Paris Dauphine - Paris IX}},
  YEAR = {2000},
  MONTH = Dec,
  TYPE = {Theses},
  HAL_ID = {tel-00001289},
  HAL_VERSION = {v1},
}

@misc{Cekic-Lefeuvre-24,
 author = {Ceki{\'c}, Mihajlo and Lefeuvre, Thibault},
 title = {Semiclassical analysis on principal bundles},
 year = {2024},
 howpublished = {Preprint, {arXiv}:2405.14846 [math.{AP}] (2024)},
 url = {https://arxiv.org/abs/2405.14846},
 arXiv = {arXiv:2405.14846}
}

@article {Geng-Iyer-21,
    AUTHOR = {Geng, Xi and Iyer, Gautam},
     TITLE = {Long time asymptotics of heat kernels and {B}rownian winding
              numbers on manifolds with boundary},
   JOURNAL = {ALEA Lat. Am. J. Probab. Math. Stat.},
  FJOURNAL = {ALEA. Latin American Journal of Probability and Mathematical
              Statistics},
    VOLUME = {18},
      YEAR = {2021},
    NUMBER = {2},
     PAGES = {1297--1323},
      ISSN = {1980-0436},
   MRCLASS = {58J35 (58J65 60F05)},
  MRNUMBER = {4282190},
       DOI = {10.30757/alea.v18-48},
       URL = {https://doi.org/10.30757/alea.v18-48},
}

@article {Helffer-Kordyukov-09,
    AUTHOR = {Helffer, B. and Kordyukov, Y. A.},
     TITLE = {Spectral gaps for periodic {S}chr\"odinger operators with
              hypersurface magnetic wells: analysis near the bottom},
   JOURNAL = {J. Funct. Anal.},
  FJOURNAL = {Journal of Functional Analysis},
    VOLUME = {257},
      YEAR = {2009},
    NUMBER = {10},
     PAGES = {3043--3081},
      ISSN = {0022-1236,1096-0783},
   MRCLASS = {58J50 (35P05 35R01 81Q20)},
  MRNUMBER = {2568685},
MRREVIEWER = {Enrique\ G.\ Reyes},
       DOI = {10.1016/j.jfa.2009.04.007},
       URL = {https://doi.org/10.1016/j.jfa.2009.04.007},
}

@article {Kha-18,
    AUTHOR = {Kha, Minh},
     TITLE = {Green's function asymptotics of periodic elliptic operators on
              abelian coverings of compact manifolds},
   JOURNAL = {J. Funct. Anal.},
  FJOURNAL = {Journal of Functional Analysis},
    VOLUME = {274},
      YEAR = {2018},
    NUMBER = {2},
     PAGES = {341--387},
      ISSN = {0022-1236,1096-0783},
   MRCLASS = {58J37 (35J08 35J10 35J15 35P05 58J05 58J50)},
  MRNUMBER = {3724142},
MRREVIEWER = {L\'eonard\ Todjihounde},
       DOI = {10.1016/j.jfa.2017.10.016},
       URL = {https://doi.org/10.1016/j.jfa.2017.10.016},
}

@article {Kotani-Sunada-00,
    AUTHOR = {Kotani, Motoko and Sunada, Toshikazu},
     TITLE = {Albanese maps and off diagonal long time asymptotics for the
              heat kernel},
   JOURNAL = {Comm. Math. Phys.},
  FJOURNAL = {Communications in Mathematical Physics},
    VOLUME = {209},
      YEAR = {2000},
    NUMBER = {3},
     PAGES = {633--670},
      ISSN = {0010-3616,1432-0916},
   MRCLASS = {58J37 (58J35 58J65)},
  MRNUMBER = {1743611},
MRREVIEWER = {Ivan\ G.\ Avramidi},
       DOI = {10.1007/s002200050033},
       URL = {https://doi.org/10.1007/s002200050033},
}

@article {Lott-99,
    AUTHOR = {Lott, John},
     TITLE = {Remark about heat diffusion on periodic spaces},
   JOURNAL = {Proc. Amer. Math. Soc.},
  FJOURNAL = {Proceedings of the American Mathematical Society},
    VOLUME = {127},
      YEAR = {1999},
    NUMBER = {4},
     PAGES = {1243--1249},
      ISSN = {0002-9939,1088-6826},
   MRCLASS = {58G11 (58G18)},
  MRNUMBER = {1476376},
MRREVIEWER = {Friedbert\ Pr\"ufer},
       DOI = {10.1090/S0002-9939-99-04685-7},
       URL = {https://doi.org/10.1090/S0002-9939-99-04685-7},
}

\end{document}